\documentclass[1 [leqno,11pt]{amsart}
\usepackage{amssymb, amsmath}
\allowdisplaybreaks[4]
 \setlength{\oddsidemargin}{0mm}
\setlength{\evensidemargin}{0mm} \setlength{\topmargin}{-15mm}
\setlength{\textheight}{220mm} \setlength{\textwidth}{155mm}

\def\refeq#1{~(\ref{#1})}
\def\ccite#1{~\cite{#1}}

\def\inte#1{
\displaystyle\mathop{#1\kern0pt}^\circ }



\let\al=\alpha

\let\e=\varepsilon

\let\lam=\lambda
\let\r=\rho
\let\s=\sigma
\let\f=\frac

\let\p=\psi

\let\D=\Delta

\let\Om=\Omega
\let\wt=\widetilde


\def\cC{{\mathcal C}}

\def\cF{{\mathcal F}}

\def\cS{{\mathcal S}}

\def\la{\lambda}


\def\h{{\rm h}}

\def\virgp{\raise 2pt\hbox{,}}
\def\cdotpv{\raise 2pt\hbox{;}}

\def\eqdefa{\buildrel\hbox{\footnotesize def}\over =}

\def\C{\mathop{\mathbb C\kern 0pt}\nolimits}
\def\DD{\mathop{\mathbb D\kern 0pt}\nolimits}
\def\EE{\mathop{  {\mathbb E \kern 0pt}}\nolimits}
\def\K{\mathop{\mathbb K\kern 0pt}\nolimits}
\def\N{\mathop{\mathbb N\kern 0pt}\nolimits}
\def\Q{\mathop{\mathbb Q\kern 0pt}\nolimits}
\def\R{\mathop{\mathbb R\kern 0pt}\nolimits}
\def\SS{\mathop{\mathbb S\kern 0pt}\nolimits}
\def\ZZ{\mathop{\mathbb Z\kern 0pt}\nolimits}
\def\TT{\mathop{\mathbb T\kern 0pt}\nolimits}
\def\P{\mathop{\mathbb P\kern 0pt}\nolimits}
\newcommand{\ds}{\displaystyle}

\newcommand{\Z}{{\ZZ}}

\def\dive{\mathop{\rm div}\nolimits}

\def\Supp{\mathop{\rm Supp}\nolimits\ }


\def\na{\nabla}
\def\p{\partial}


\def\dH{\dot{H}}
\def\dB{\dot{B}}
\def\ga{\gamma}

\newcommand{\beq}{\begin{equation}}
\newcommand{\eeq}{\end{equation}}
\newcommand{\ben}{\begin{eqnarray}}
\newcommand{\een}{\end{eqnarray}}
\newcommand{\beno}{\begin{eqnarray*}}
\newcommand{\eeno}{\end{eqnarray*}}
\newcommand{\andf}{\quad\hbox{and}\quad}
\newcommand{\with}{\quad\hbox{with}\quad}
\newtheorem{defi}{Definition}[section]
\newtheorem{thm}{Theorem}[section]
\newtheorem{lem}{Lemma}[section]
\newtheorem{rmk}{Remark}[section]
\newtheorem{col}{Corollary}[section]
\newtheorem{prop}{Proposition}[section]

\begin{document}
\title[Fujita-Kato solution of 3-D inhomogeneous Navier-Stokes system]
{Global Fujita-Kato solution of   3-D inhomogeneous incompressible Navier-Stokes system
}
\author[P. Zhang]{Ping Zhang} \address[P. Zhang]{Academy of Mathematics $\&$ Systems Science
and  Hua Loo-Keng Key Laboratory of Mathematics, The Chinese Academy of
Sciences, Beijing 100190, CHINA, and School of Mathematical Sciences, University of Chinese Academy of Sciences, Beijing 100049, China.} \email{zp@amss.ac.cn}

\date{\today}

\begin{abstract} In this paper, we shall prove the global existence of weak solutions to 3D inhomogeneous incompressible Navier-Stokes system $({\rm INS})$
with initial density in the bounded function space and having a
positive lower bound and with initial velocity being sufficiently small  in the critical Besov space, $\dot B^{\f12}_{2,1}.$  This
result corresponds to the Fujita-Kato solutions of the classical Navier-Stokes system. The same idea can be used
to prove the global existence of weak solutions in the \emph{critical functional framework}  to $({\rm INS})$  with one component of the initial
velocity being large and can also be applied to provide a lower bound for the lifespan of smooth enough solutions
of $({\rm INS}).$
\end{abstract}
\maketitle

\noindent {\sl Keywords:} Inhomogeneous Navier-Stokes system, Besov space,  weak solutions.

\vskip 0.2cm
\noindent {\sl AMS Subject Classification (2000):} 35Q30, 76D03  \

\setcounter{equation}{0}
\section{Introduction}

In this paper, we consider the  global existence of
weak solutions to the following three dimensional
incompressible inhomogeneous Navier-Stokes equations with initial
density in the bounded function space and having a positive lower bound,  and with initial velocity being sufficiently small in the critical Besov space, $\dB^{\f12}_{2,1},$
\begin{equation}\label{S1eq1}
\left\{
\begin{array}{ll}
\p_t\rho+u\cdot\na\rho=0,\qquad (t,x)\in\R^+\times\R^3,\\
\rho(\p_tu+u\cdot\na u)-\Delta u+\na \pi=0, \\
\textrm{div} u=0,\\
(\rho,u)|_{t=0}=(\rho_0,u_0),
\end{array}
\right.
\end{equation}
where $\rho, u$ stand for the density and  velocity of the fluid
respectively, $\pi$  is a scalar pressure function which guarantees the divergence free condition
of the velocity field.
 Such a system describes a fluid
which is obtained by mixing several immiscible fluids that are
incompressible and that have different densities.

 \smallbreak
 Let us first state  three major basic  features of system \eqref{S1eq1} in general Euclidean space $\R^d$, $d\geq 2$.
 Firstly,
 the incompressibility condition on  the convection velocity field in the density transport equation ensures  that
  \beq \label{S1eq13}
 \|\rho(t)\|_{L^\infty}
 =\|\rho_0\|_{L^\infty}
 \andf
 \mbox{meas}\bigl\{\ x\in\R^d\ |\ \al\leq\rho(t,x)\leq \beta\ \bigr\}\ \
 \mbox{is independent of}\ t\geq 0, \eeq
 for any pair of non-negative  real numbers $(\al,\beta)$.
 Secondly,   this system has the following energy law
   \beq \label{S1eq11} \frac 12 \int_{\R^d}\rho(t,x)|u(t,x)|^2 \textrm{d}x
 +\int_{0}^t\|\nabla u(t',\cdot)\|_{L^2(\R^d)}^2 \textrm{d}t'
 =\frac 12
 \int_{\R^d}\rho_0(x)|u_0(x)|^2 \textrm{d}x. \eeq
 The third  basic feature is
 the scaling invariance property: if~$(\rho,
 u,\pi)$ is a solution of \eqref{S1eq1} on~$[0,T]\times \R^d$,
 then the rescaled triplet~$(\rho, u,\pi)_\lam$ defined by
 \beq
 \label{S1eq12}
  (\rho, u
 ,\pi)_\lam(t,x) \eqdefa \bigl(\rho(\lam^2t, \lam x) , \lam u
 (\lam^2t, \lam x) ,\lam^2 \pi(\lam^2t,\lam x)\bigr),
 \quad \lambda\in\R\eeq is also a
 solution of \eqref{S1eq1}  on $[0,T/\la^2]\times \R^d$. This leads to the
 notion of critical regularity.

 \smallbreak
 Based on the energy estimate\refeq{S1eq11},
 Simon \ccite{Simon} (see \cite{LP} for  general result on the  variable viscosity case) constructed   global weak solutions of the system \eqref{S1eq1} with finite energy.
 Ladyzhenskaya and
  Solonnikov   \cite{LS} proved the local well-posedness to the system \eqref{S1eq1} with homogeneous Dirichlet
  boundary condition and with smooth initial data that has no vacuum. Motivated by \eqref{S1eq12}, Danchin \cite{danchin}
  established the well-posedness of \eqref{S1eq1}  in the so-called \emph{critical functional framework}
  for small perturbations of some positive constant density. The essential
  idea in  \cite{danchin} is to use functional spaces (or norms) that have the same
  \emph{scaling invariance} as \eqref{S1eq12}.
   In this framework, it has                                                     been stated in \cite{abidi, danchin} that for the initial data
  $(\r_0, u_0)$ satisfying \beno (\r_0-1)\in
  \dot{B}^{\f{d}p}_{p,1}(\R^d),\  u_0\in
  \dot{B}^{\f{d}p-1}_{p,1}(\R^d)\ \mbox{with}\ \dive u_0=0 \eeno and
  that for a small enough constant $c$ \beno
  \|\r_0-1\|_{\dot{B}^{\f{d}p}_{p,1}}+\|u_0\|_{\dot{B}^{\f{d}p-1}_{p,1}}\leq
  c, \eeno we have for any $p\in [1, 2d[$
  \begin{itemize}
  \item existence of global solution $(\r, u, \na p)$ with $\r-1\in
  C([0,\infty[;$ $\dot{B}^{\f{d}p}_{p,1}(\R^d)), $  $u\in
  C([0,\infty[;$ $\dot{B}^{\f{d}p-1}_{p,1}(\R^d)),$ and $\p_tu,
  \na^2 u, \na p\in L^1(\R^+;\dot{B}^{\f{d}p-1}_{p,1}(\R^d));$
  \item uniqueness in the above space if in addition $p\leq d.$
  \end{itemize}
  The above existence result
  was extended to general Besov spaces   in
  \cite{AP,dm, PZ2} even without the size restriction for the initial density \cite{AGZ2, AGZ3}. The uniqueness
  of such solutions for $p\in ]d,2d[$ was obtained by Danchin and Mucha in \cite{dm}.

 In all these aforementioned works, the density has to be at
 least in the Besov space $\dot B^{d/p}_{p,\infty}(\R^d)$, which excludes the density function with  discontinuities  across some hypersurface.
  Indeed,
   the Besov regularity of the characteristic function of a smooth domain is only $\dot B^{1/p}_{p,\infty}(\R^d).$
   Therefore, those results do not apply to a  mixture flow composed of two separate fluids with different densities.

 In particular, Lions proposed the following open question in
  \cite{LP}: suppose the initial density $\r_0={\bf 1}_{D}$ for some
  smooth domain $D,$ Theorem 2.1 of \cite{LP} provides at least
  one global weak solution $(\r,u)$ of \eqref{S1eq1} such that for all
  $t\geq 0,$ $\r(t)={\bf 1}_{D(t)}$ for some set $D(t)$ with ${\rm vol}(D(t))={\rm vol}(D).$
  Then whether or not the regularity of $D$ is preserved by  time
  evolution? To avoid the difficulty caused by vacuum, Liao and the author \cite{LZ1}
  investigated the case when the system \eqref{S1eq1} is supplemented with  the initial
 density, $\r_0(x)=\eta_1{\bf 1}_{\Om_0}+\eta_2{\bf 1}_{\Om_0^c},$ for a pair of positive   constants $(\eta_1,\eta_2)$ with $|\eta_1-\eta_2|$
 being sufficiently small,  and where $\Om_0$ is a
  bounded, simply connected  2D domain with $W^{k+2,p}$-boundary regularity for $k\in\N.$
  This smallness assumption for the difference between $\eta_1$ and $\eta_2$ was removed by the authors
    in \cite{LZ2}. Danchin and Zhang \cite{DZX}, Gancedo and Garcia-Juarez \cite{GG}  proved the propagation of $C^{k+\gamma}$ regularity
   of the density patch to \eqref{S1eq1}. Lately Danchin and Mucha \cite{dm3} can allow vacuum.

  In the general case when $\r_0\in L^\infty$ with a positive lower bound and  initial velocity $u_0\in H^1,$
  Kazhikov \cite{Ka74} proved the local existence of weak solution to the system \eqref{S1eq1}.
   While with $u_0\in H^2,$  Danchin and  Mucha \cite{dm2} proved that the  system \eqref{S1eq1} has a unique local in time solution.
 Paicu,  the  author and Zhang  \ccite {PZZ1}  improved the well-posedness results in \cite{dm2} with initial velocity in $H^s(\R^2)$
 for any $s>0,$ and with initial velocity in $H^1(\R^3).$   Chen, Zhang and Zhao \cite{CZZ16} further improved the regularity of the initial
 velocity in 3D to be in $H^s(\R^3)$  for any $s>\f12.$ Nevertheless, in either  \cite{CZZ16} or \cite{PZZ1}, the authors can not prove the propagation
 of the regularities for the initial velocity field, namely, they can not prove the velocity $u$ belongs to $C([0,T]; H^s).$ Furthermore,
 the  norms  of the initial velocity in \cite{CZZ16, PZZ1} is not critical in the sense that the norms are not scaling invariant
 under the transformation \eqref{S1eq12}.

On the other hand,  when~$\rho_0\equiv 1$, the
 system \eqref{S1eq1} reduces to the classical incompressible
 Navier-Stokes system (NS).  Let us recall the following celebrated result by Fujita and Kato \cite{KT64} on (NS):

 \begin{thm}  \label{KT64}
 {\sl   Given solenoidal vector field $u_0\in \dot H^{\frac 1 2}$  with $ \|u\|_{\dot H^{\frac 1 2}}\leq \e_0$ for $\e_0$ sufficiently
 small, then (NS) has a unique global solution $u\in C([0,\infty[; \dH^{\f12})\cap L^4(\R^+; \dH^1)\cap L^2(\R^+; \dH^{\f32}).$    }
 \end{thm}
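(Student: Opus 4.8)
The plan is to recast (NS) in its mild (integral) form and to solve it by a contraction-mapping (Picard) argument in a scaling-critical resolution space. Applying the Leray projector $\P=\Id-\nabla\Delta^{-1}\dive$ to eliminate the pressure, a divergence-free field $u$ with $u|_{t=0}=u_0$ solves (NS) exactly when
\[ u(t)=e^{t\Delta}u_0+B(u,u)(t),\qquad B(u,v)(t)\eqdefa-\int_0^t e^{(t-s)\Delta}\P\,\dive(u\otimes v)(s)\,ds, \]
the pressure being recovered afterwards from $\nabla\pi=-(\Id-\P)\dive(u\otimes u)$. I would work in $X\eqdefa L^\infty(\R^+;\dH^{\f12})\cap L^2(\R^+;\dH^{\f32})$, which is invariant under the scaling \eqref{S1eq12} and which controls the intermediate norm through the interpolation inequality $\|u(t)\|_{\dH^1}\leq\|u(t)\|_{\dH^{\f12}}^{\f12}\|u(t)\|_{\dH^{\f32}}^{\f12}$, so that $\|u\|_{L^4(\R^+;\dH^1)}\leq\|u\|_X$. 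The first step is the linear bound: the multiplier $e^{-t|\xi|^2}\leq1$ gives $L^\infty_t\dH^{\f12}$ control and time-continuity at once, while on the Fourier side $\int_0^\infty\|e^{t\Delta}u_0\|_{\dH^{\f32}}^2\,dt=\f12\|u_0\|_{\dH^{\f12}}^2$ by Fubini; hence $\|e^{t\Delta}u_0\|_X\lesssim\|u_0\|_{\dH^{\f12}}$.

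The heart of the argument is the bilinear estimate
\[ \|B(u,v)\|_X\lesssim\|u\|_{L^4(\R^+;\dH^1)}\|v\|_{L^4(\R^+;\dH^1)}\leq\|u\|_X\|v\|_X, \]
which I would establish in two stages. First, the scaling-critical Sobolev product law in $\R^3$ --- valid because $1<\f32$ and $1+1-\f32=\f12>0$ --- yields $\|u\otimes v\|_{\dH^{\f12}}\lesssim\|u\|_{\dH^1}\|v\|_{\dH^1}$ pointwise in time; since $\P$ and $\Delta^{-1}\dive$ are bounded on homogeneous Sobolev spaces, this gives $\|\P\,\dive(u\otimes v)\|_{\dH^{-\f12}}\lesssim\|u\|_{\dH^1}\|v\|_{\dH^1}$, and a Cauchy--Schwarz in time turns the product of two $L^4_t\dH^1$ factors into $\|\P\,\dive(u\otimes v)\|_{L^2(\R^+;\dH^{-\f12})}\lesssim\|u\|_{L^4(\R^+;\dH^1)}\|v\|_{L^4(\R^+;\dH^1)}$. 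Second, the parabolic maximal-regularity estimate for the Duhamel operator: if $w=\int_0^t e^{(t-s)\Delta}F\,ds$, then testing $\partial_t w-\Delta w=F$ against $(-\Delta)^{\f12}w$ and integrating in time yields $\|w\|_{L^\infty_t\dH^{\f12}}^2+\|w\|_{L^2_t\dH^{\f32}}^2\lesssim\|F\|_{L^2(\R^+;\dH^{-\f12})}^2$, together with $w\in C(\R^+;\dH^{\f12})$. Composing the two stages gives the bilinear bound.

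With the linear bound $\|e^{t\Delta}u_0\|_X\leq C_0\e_0$ and the bilinear bound $\|B(u,v)\|_X\leq C_1\|u\|_X\|v\|_X$ in hand, the standard fixed-point lemma in the Banach space $X$ produces, as soon as $4C_0C_1\e_0<1$, a unique solution of $u=e^{t\Delta}u_0+B(u,u)$ with $\|u\|_X\leq2C_0\e_0$; it is global because every norm is taken over $\R^+$. Time-continuity in $\dH^{\f12}$ and membership in $L^4(\R^+;\dH^1)$ then follow from the linear and bilinear continuity statements and the interpolation inequality above. For uniqueness within the full class $C([0,\infty[;\dH^{\f12})\cap L^4\dH^1\cap L^2\dH^{\f32}$, I would write the difference $w=u_1-u_2$ as $w=B(u_1,w)+B(w,u_2)$ and absorb it through $\|w\|_{X_T}\leq C_1(\|u_1\|_{L^4_T\dH^1}+\|u_2\|_{L^4_T\dH^1})\|w\|_{X_T}$ on a short interval where the bracketed factor is $<(2C_1)^{-1}$, then propagate by continuation. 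The main obstacle is precisely this bilinear estimate at criticality: the product law $\dH^1\cdot\dH^1\hookrightarrow\dH^{\f12}$ in three dimensions holds with no regularity to spare --- it sits exactly at the scaling-critical endpoint --- so the whole scheme hinges on this borderline inequality, and it is this absence of slack that forces the smallness hypothesis $\|u_0\|_{\dH^{\f12}}\leq\e_0$.
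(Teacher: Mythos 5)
Your argument is correct, but note first that the paper itself offers no proof of this statement: Theorem \ref{KT64} is recalled verbatim from Fujita--Kato \cite{KT64} purely as motivation for the inhomogeneous results, so there is no in-paper proof to compare line by line. Your route is the classical one: mild formulation after applying the Leray projector, the linear heat bound in $X=L^\infty(\R^+;\dH^{\f12})\cap L^2(\R^+;\dH^{\f32})$, the borderline product law $\dH^1\cdot\dH^1\hookrightarrow\dH^{\f12}$ (which is exactly the case $s_1=s_2=1$ of the paper's Lemma \ref{S3lem0}, and that lemma in fact gives the stronger $\dB^{\f12}_{2,1}$ output), the energy/maximal-regularity estimate for the Duhamel term, Picard iteration, and absorption on short time intervals for uniqueness in the full class; each of these steps checks out, including the Fubini computation $\int_0^\infty\|e^{t\Delta}u_0\|_{\dH^{\f32}}^2\,dt=\f12\|u_0\|_{\dH^{\f12}}^2$ and the interpolation $\|u\|_{L^4(\R^+;\dH^1)}^4\leq\|u\|_{L^\infty_t(\dH^{\f12})}^2\|u\|_{L^2_t(\dH^{\f32})}^2$. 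What is instructive is the contrast with the method the paper actually develops for its own theorems (Propositions \ref{S3prop1}--\ref{S3prop3}): there the semigroup/fixed-point scheme is deliberately abandoned, because once the density multiplies $\p_tu$ and is merely in $L^\infty$ with a positive lower bound, neither a Duhamel formula based on $e^{t\D}$ nor the commutator estimates for $[\D_j;a](\D u-\na\pi)$ are available. Instead the paper frequency-localizes the \emph{initial data}, solves the linear systems \eqref{S3eq1} with the full convection velocity and variable density for each block $u_j$, performs (time-weighted) energy estimates on each block, and resums via Lemma \ref{S2lem1} (Lemma 2.64 of \cite{BCD}). Your contraction argument buys uniqueness and analyticity of the solution map in one stroke, but it is rigid under variable coefficients; the paper's block-energy method forfeits the fixed point (hence produces weak solutions, with uniqueness requiring extra regularity) but survives a rough density, which is precisely the point of the paper. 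As a proof of the classical Fujita--Kato statement as posed, your proposal is complete and sound.
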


Before proceeding, let us recall the definition of weak solutions to \eqref{S1eq1} from \cite{ HPZ2, PZZ1}:

\begin{defi}\label{defi1.1} {\sl We call $(\r,u, \na\pi)$ a global weak solution of \eqref{S1eq1}
if
\begin{itemize}
\item for any test function $\phi\in
C^\infty_c([0,\infty[\times\R^3),$ there holds
\beq\label{def1.1a}\begin{split}
\int_0^\infty\int_{\R^3}&\r(\p_t\phi+u\cdot\na\phi)\,dx\,dt+\int_{\R^3}\phi(0,x)\r_0(x)\,dx=0,\\
& \int_0^\infty\int_{\R^3}\dive u\phi\,dx\,dt=0,\end{split} \eeq

\item for any vector valued function $\Phi=(\Phi^1,\Phi^2,\Phi^3)\in
C_c^\infty([0,\infty[\times\R^3),$  one has \beq\label{def1.1b}
\int_0^\infty\int_{\R^3}\Bigl\{u\cdot\p_t\Phi-(u\cdot\na u) \cdot
\Phi -\f1\r\bigl(\D
u-\na\pi\bigr)\Phi\Bigr\}\,dx\,dt+\int_{\R^3}u_0\cdot\Phi(0,x)\,dx=0.
\eeq
\end{itemize}}
\end{defi}

The goal of the following theorem is to prove similar version of Theorem \ref{KT64} for the inhomogeneous incompressible Navier-Stokes
system \eqref{S1eq1},  the proof of which will be based on the basic features of \eqref{S1eq1}, namely, \eqref{S1eq13}, \eqref{S1eq11} and \eqref{S1eq12}.

\begin{thm}\label{thm1}
{\sl Let $(\rho_0,u_0)$ satisfy \beq\label{S1eq2}
0<c_0\le \rho_0(x)\le C_0<+\infty \andf u_0\in B^{\f12}. \eeq Then there
exists a constant $\varepsilon_0>0$ depending only on  $c_0, C_0$ such
that if \beq\label{S1eq3} \|u_0\|_{B^{\f12}}\le
\varepsilon_0, \eeq
the system (\ref{S1eq1}) has a  global weak solution  $(\r, u)$ with
$\rho\in L^\infty(\R^+\times\R^3)$ and $u\in C([0,\infty[, B^{\f12})\cap L^2(\R^+;B^{\f32})$ which satisfies
\beq \label{S1eq14} c_0\leq \rho(t,x) \leq C_0 \quad \mbox{for}\ (t,x)\in \R^+\times\R^3,
\eeq and
\beq\label{S1eq7}
\begin{split}
\|u\|_{\wt{L}^\infty_t(B^{\f12})}&+\|u\|_{\wt{L}^2_t(B^{\f32})}+\|\sqrt{t} u\|_{\wt{L}^\infty_t(\dB^{\f32})}
+\bigl\|\sqrt{t}(\na u, \pi)\bigr\|_{L^2_t(B^{\f12}_{6,1})}\\
&+\|\sqrt{t}u_t\|_{\wt{L}^2_t(B^{\f12})}+\|tu_t\|_{\wt{L}^\infty_t(B^{\f12})}+\|t D_t u\|_{\wt{L}^2_t(B^{\f32})}\leq C\|u_0\|_{B^{\f12}}.
\end{split}
\eeq Here and in all that follows, we always denote $D_t\eqdefa \p_t+u\cdot\na$ to be the material derivative.  For simplification, we always
denote $B^{s}\eqdefa \dB^s_{2,1}$ in this paper. The definitions of Besov spaces, $\dB^s_{p,r},$ and  Chemin-Lerner type space, $\widetilde{L}^{q}_T(B^s),$ will be recalled in the Appendix
\ref{apA}.
}
\end{thm}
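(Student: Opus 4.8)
The plan is to build the solution by a parabolic regularization/approximation scheme and pass to the limit using uniform estimates in the critical norm $B^{\f12}=\dB^{\f12}_{2,1}$. Concretely, I would first smooth the data: mollify $u_0$ to $u_0^n$ and regularize $\rho_0$ to a smooth $\rho_0^n$ still satisfying $c_0\le\rho_0^n\le C_0$ and with $\|u_0^n\|_{B^{\f12}}\le\varepsilon_0$ uniformly. For each fixed $n$ the density is bounded away from zero and the data are smooth, so the local well-posedness results of Danchin–Mucha (\cite{dm2}, improved in \cite{PZZ1,CZZ16}) furnish a unique smooth solution $(\rho^n,u^n,\na\pi^n)$ on a maximal time interval. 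The crux is to prove that, provided $\varepsilon_0$ is chosen small in terms of $c_0,C_0$ only, this smooth solution is \emph{global} and obeys the estimate \eqref{S1eq7} with a constant independent of $n$; then the maximal time must be $+\infty$.

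The heart of the matter is the a priori estimate, and I expect it to proceed in a hierarchy of energy inequalities tied to the scaling \eqref{S1eq12}. First, the basic energy law \eqref{S1eq11} together with \eqref{S1eq13} controls $\sqrt{\rho}u$ in $L^\infty_t(L^2)$ and $\na u$ in $L^2_t(L^2)$. Next I would localize the momentum equation with the Littlewood–Paley blocks $\ppd_j$ and run an $\dot B^{\f12}_{2,1}$ energy estimate: because the density is merely bounded, one cannot diagonalize the operator $\rho^{-1}$ cleanly, so the strategy is to test the localized equation against $\ppd_j u$ after multiplying by $\rho$, write $\rho\,D_t u-\Delta u+\na\pi=0$, and treat the variable-coefficient term $\rho D_t u$ by commutator estimates and the positive lower bound $c_0$. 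The transport term $u\cdot\na u$ and the pressure are estimated in Besov norms via product and commutator laws (recalled in the Appendix), and the smallness $\|u_0\|_{B^{\f12}}\le\varepsilon_0$ is used to absorb the quadratic nonlinearity, closing a bound of the form $\|u\|_{\wt L^\infty_t(B^{\f12})}+\|u\|_{\wt L^2_t(B^{\f32})}\le C\|u_0\|_{B^{\f12}}$.

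To obtain the remaining weighted-in-time quantities in \eqref{S1eq7} — the terms carrying $\sqrt t$ and $t$ — I would differentiate the structure in time, working with the material derivative $D_t u$ rather than $\p_t u$ so that the transport structure of the density is respected. Multiplying the estimates by the scaling-consistent weights $\sqrt t$ and $t$ produces smoothing bounds: $\sqrt t\,u\in\wt L^\infty_t(\dB^{\f32})$, $\sqrt t(\na u,\pi)\in L^2_t(B^{\f12}_{6,1})$, $\sqrt t\,u_t\in\wt L^2_t(B^{\f12})$, $t\,u_t\in\wt L^\infty_t(B^{\f12})$, and $t\,D_t u\in\wt L^2_t(B^{\f32})$. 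These follow by applying $D_t$ to the momentum equation, using the maximal-regularity/heat-smoothing for the Stokes-type operator with bounded density, and handling the commutator $[D_t,\Delta]$ and the pressure gradient through the $L^p$ Calderón–Zygmund bounds on $\na\pi$. The weighting in $t$ is exactly what the parabolic scaling permits, and it decouples the high-regularity gain near $t=0$ from the merely $B^{\f12}$ regularity of the data.

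Finally I would pass to the limit $n\to\infty$. The uniform bounds \eqref{S1eq7} give weak-$*$ compactness for $u^n$ and, via the equation, strong compactness of $u^n$ in suitable local norms by an Aubin–Lions argument; the density $\rho^n$, uniformly bounded in $L^\infty$ and satisfying the transport equation with the strongly convergent velocity, converges in the weak-$*$ $L^\infty$ topology, and \eqref{S1eq14} is inherited from \eqref{S1eq13}. The strong convergence of $u^n$ is enough to pass to the limit in the nonlinear terms $\rho^n u^n\otimes u^n$ and to verify the weak formulation of Definition \ref{defi1.1}. The main obstacle throughout is the low regularity of the density: since $\rho$ is only $L^\infty$ with a positive lower bound, every step that would ordinarily use derivative bounds on $\rho$ must instead rely solely on the maximum principle \eqref{S1eq13}, the lower bound $c_0$, and the time-weighted smoothing of $u$; keeping the constants dependent only on $c_0,C_0$ — and not on any Besov norm of $\rho_0$ — is the delicate point that makes the critical smallness condition \eqref{S1eq3} suffice.
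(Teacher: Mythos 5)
Your overall architecture---mollify the data, invoke local well-posedness for smooth data with density bounded away from zero, prove uniform critical-norm a priori bounds, then pass to the limit by Aubin--Lions compactness---matches the paper's. But the core step, the a priori estimate in $B^{\f12}$, is proposed via exactly the route the paper explains cannot work here. If you apply $\D_j$ to the momentum equation (whether written as $\r D_tu-\D u+\na\pi=0$ or divided through by $\r$) and test against $\D_ju$, you must commute $\D_j$ past the coefficient $\r$ (equivalently $a=\f1\r-1$), and the commutator $[\D_j;a](\D u-\na\pi)$ is controllable only when $a$ has critical Besov regularity, e.g. $a\in L^\infty\cap\dB^{\f32}_{2,\infty}$; with $\r_0$ merely in $L^\infty$ with a positive lower bound such commutators give no gain at all, and the smallness of $u_0$ cannot compensate, since the density term is $O(1)$ rather than small. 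The same obstruction defeats your appeal to ``maximal-regularity/heat-smoothing for the Stokes-type operator with bounded density'': maximal regularity for $\r\p_t-\D$ with rough, merely measurable $\r$ is not available, which is precisely why the paper (unlike \cite{HPZ2}) avoids it.

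The paper's actual device, absent from your proposal, is to decompose the \emph{solution} rather than the equation: for each $j$ one solves the linear system \eqref{S3eq1}, namely $\r(\p_tu_j+u\cdot\na u_j)-\D u_j+\na\pi_j=0$, $\dive u_j=0$, $u_j|_{t=0}=\D_ju_0$, with the full velocity $u$ as transport field, so that by uniqueness $u=\sum_{j\in\Z}u_j$ and $\na\pi=\sum_{j\in\Z}\na\pi_j$. Each $u_j$ has smooth, frequency-localized data, so plain weighted $L^2$ energy estimates---testing with $u_j$, $\p_tu_j$, $D_tu_j$ and multiplying by $t$ or $t^2$---see the density only through $c_0\le\r\le C_0$ (it enters as the harmless weight $\sqrt{\r}$); no commutator with $\r$ ever appears, and $\na^2u_j,\na\pi_j$ are controlled via the \emph{constant-coefficient} Stokes problem \eqref{S3eq2b} with $\r\p_tu_j+\r u\cdot\na u_j$ as an $L^2$ right-hand side. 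The Besov bound on $u$ itself is then recovered by the mechanism of Lemma \ref{S2lem1} (Lemma 2.64 of \cite{BCD}): one estimates $\D_ju=\sum_{j'}\D_ju_{j'}$ using the $L^2$ bounds for $j'\geq j$ and the $\na,\na^2$ bounds together with Bernstein's inequality (gaining $2^{-j}$) for $j'\leq j$, and sums the resulting series against $(d_{j'})\in\ell^1$. Smallness enters only through a continuity argument on $\|u\|_{L^\infty_t(\dH^{\f12})}$ used to absorb $u\cdot\na u_j$. Without this solution decomposition (or an equivalent idea) your step-two estimate does not close, and the time-weighted bounds you stack on top of it inherit the gap.
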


\begin{rmk}  \begin{itemize}

\item[(1)] We improve the regularity of the initial velocity in $H^s$ for any $s>\f12$  in \cite{CZZ16} to be the critical space $B^{\f12}.$
Moreover, we can propagate the regularity of the initial velocity field, namely, here $u\in C([0,\infty[, B^{\f12}).$
Whereas the velocity field in  \cite{CZZ16} belongs to some time-weighted integer Sobolev spaces. More precisely, for any $t\in [0,+\infty[$ and
 $\s(t)\eqdefa\min(1,t),$ they have
\beno
\begin{split}
&\|\sigma(t)^{\f{1-s}2}\na u\|_{L^\infty_t(L^2)}+\bigl\|\sigma(t)^{\f{1-s}2}(\na^2u,u_t,\na\pi)\bigr\|_{L^2_t(L^2)}\le  C\|u_0\|_{H^s}^2,\\
&\bigl\|{\sigma(t)}^{1-\f{s}2}(\na^2u,u_t,\na\pi)\bigr\|_{L^\infty_t(L^2)}+\|{\sigma(t)}^{1-\f{s}2}\na u_t\|_{L^2_t(L^2)}
\le C\|u_0\|_{H^s}^2\exp\bigl(C(\|u_0\|_{L^2}^2+\|
u_0\|_{H^s}^4)\bigr).
\end{split} \eeno

 \item[(2)] The time weight in \eqref{S1eq7} is optimal even for heat semigroup $e^{t\D} u_0.$ Indeed, it follows from
 Lemma \ref{S4lem0} and Bernstein inequality that
 \beno
 \begin{split}
 \|\D_j(t^{\f12}\na e^{t\D}u_0)\|_{L^2}\lesssim& t^{\f12}2^j e^{-ct2^{2j}}\|\D_j u_0\|_{L^2}
 \lesssim  \|\D_j u_0\|_{L^2},
 \end{split}
 \eeno
 which implies that
 $  \| t^{\f12}\na e^{t\D}u_0 \|_{\wt{L}^\infty_t(B^s)}\lesssim \|u_0\|_{B^s}$ for any $s\in\R.$

 \item[(3)]  As in \cite{CZZ16, PZZ1}, with a little bit more regularity assumption on the initial velocity
 field, we can also prove the uniqueness of such weak solutions of \eqref{S1eq1} constructed in Theorem \ref{thm1}.

  \end{itemize}

\end{rmk}

 Again in  a \emph{critical functional framework}, Huang, Paicu and the  author  \cite{HPZ2}
 proved the global existence of weak solutions to \eqref{S1eq1}        provided that the initial data satisfy the nonlinear smallness
  condition:
  $$ \bigl(\|\r_0^{-1}-1\|_{L^\infty}+\|u_0^h\|_{\dot{B}^{-1+\frac{3}p}_{p, r}}\bigr)\exp\Bigl(
  C_r\|u_0^3\|_{\dot{B}^{-1+\frac{3}p}_{p,r}}^{2r}\Bigr)\leq \e_0
  $$ for some positive constants $\e_0, C_r$ and $1< p<3,$
  $1<r<\infty,$ where $u_0^h=(u_0^1,u_0^{2})$ and
  $u_0=(u_0^h,u_0^3).$ With a little bit more regularity assumption on
  the initial velocity, they \cite{HPZ2} also proved the uniqueness of
  such solutions.  Danchin
 and the  author extended this result to the half-space setting in \cite{DZ1}.  Nonetheless as in \cite{CZZ16, PZZ1},
  the authors there can not prove the propagation of the fractional derivative
  for the initial velocity field. Moreover, The result in \cite{HPZ2} does not work for the index $r=1$
  due to technical reason (the application of maximal regularity estimate for heat semi-group forbids the case for $r=1.)$
 The purpose of the next theorem is to solve the aforementioned  questions.

  Toward this, let us denote $a\eqdefa\f1{\r}-1.$ Then we can reformulate \eqref{S1eq1} as
\begin{equation}\label{S4eq1}
\left\{
\begin{array}{ll}
\p_ta+u\cdot\na a=0,\qquad (t,x)\in\R^+\times\R^3,\\
\p_tu+u\cdot\na u-(1+a)\bigl(\Delta u-\na \pi)=0, \\
\textrm{div} u=0,\\
(a,u)|_{t=0}=(a_0,u_0),
\end{array}
\right.
\end{equation}

The second result of this paper states as follows:

\begin{thm}\label{thm2}
{\sl Let $a_0\in
L^\infty$ and $ u_0=(u_0^\h,u_0^3)\in {B}^{\f12}.$ Then
there exists a positive constant $\e_0$ so that if \beq
\label{small1} \eta \eqdefa
\bigl(\|a_0\|_{L^\infty}+\|u_0^h\|_{{B}^{\f12}}\bigr)\exp\bigl(C
\|u_0^3\|_{{B}^{\f12}}^{2}\bigr)\leq
\e_0, \eeq \eqref{S4eq1} has a global weak solution $(a,u)$ in the
sense of Definition \ref{defi1.1}, which satisfies
$u=v+e^{t\D}u_0$ and
\beq\label{S1eq8}
      \begin{split}
      \|v\|_{\wt{L}^\infty_t(B^{\f12})}&+\|v\|_{\wt{L}^2_t(B^{\f32})}+\|\sqrt{t}v\|_{\wt{L}^\infty_t(B^{\f32})}
      \\
      &+\|\sqrt{t}v_t \|_{\wt{L}^2_t(B^{\f12})}+\|tv_t\|_{\wt{L}^\infty_t(B^{\f12})}+\|t D_tv\|_{\wt{L}^2_t(B^{\f32})}\leq C\eta.
      \end{split}
      \eeq
}
\end{thm}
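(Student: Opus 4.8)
The plan is to run the Fujita--Kato scheme behind Theorem \ref{thm1}, but organised around the anisotropic splitting $u=(u^\h,u^3)$ so that only the horizontal data $u_0^\h$ and the density perturbation $a_0$ need be small, the possibly large vertical component $u_0^3$ being absorbed into an exponential Gronwall weight. Concretely, I would first build approximate solutions of \eqref{S4eq1} (smoothing the data together with a Friedrichs-type truncation), establish the bound \eqref{S1eq8} uniformly in the approximation parameter, and then recover a weak solution in the sense of Definition \ref{defi1.1} by an Aubin--Lions compactness argument. Thus the whole difficulty is concentrated in the a priori estimate \eqref{S1eq8}.

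Set $u_L\eqdefa e^{t\D}u_0$ and $v\eqdefa u-u_L$, so that $\dive v=0$, $v|_{t=0}=0$, and
\begin{equation*}
\p_t v-\D v+\na\pi=-u\cdot\na u+a(\D u-\na\pi).
\end{equation*}
The transport equation $\p_t a+u\cdot\na a=0$ with $\dive u=0$ gives $\|a(t)\|_{L^\infty}=\|a_0\|_{L^\infty}$ for all $t\geq0$, which yields \eqref{S1eq14} and keeps $1+a=\r^{-1}$ bounded above and below. On the linear part, the parabolic smoothing of the heat semigroup provides
\[\|u_L\|_{\wt{L}^\infty_t(B^{\f12})}+\|u_L\|_{\wt{L}^2_t(B^{\f32})}\lesssim\|u_0\|_{B^{\f12}},\qquad \|u_L^3\|_{\wt{L}^2_t(B^{\f32})}\lesssim\|u_0^3\|_{B^{\f12}},\]
together with the corresponding $\sqrt t$- and $t$-weighted bounds; note that the second quantity is exactly the one entering the exponent of \eqref{small1}.

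The heart of the proof is the Besov energy estimate for $v$: apply $\ppd_j$, pair in $L^2$ with $\ppd_j v$ (the pressure gradient on the left drops out by incompressibility, but reappears weighted by $a$ on the right), use Bernstein's inequality for the dissipation, then multiply by $2^{j/2}$ and sum over $j$. The nonlinearity is decomposed anisotropically as $u\cdot\na u=u^\h\cdot\nablah u+u^3\p_3 u$, and each factor is split further through $u=u_L+v$. Using the product law $\|fg\|_{B^{\f12}}\lesssim\|f\|_{B^{\f32}}\|g\|_{B^{\f12}}$ in $\R^3$, every purely-$u_L$ source term and every term mixing the two velocities carries a factor of the small horizontal data; the potentially dangerous quadratic-in-$u^3$ piece is tamed by the incompressibility relation $\p_3 u^3=-\divh u^\h$, which gives $\|u_L^3\p_3 u_L^3\|_{\wt{L}^1_t(B^{\f12})}=\|u_L^3\divh u_L^\h\|_{\wt{L}^1_t(B^{\f12})}\lesssim\|u_0^3\|_{B^{\f12}}\|u_0^\h\|_{B^{\f12}}$. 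The only contributions that are merely \emph{linear} in $v$ with a large coefficient are those proportional to $u_L^3$, typified by $u_L^3\p_3 v$; after a Young inequality the portion not absorbed by the dissipation $\|v\|_{\wt{L}^2_t(B^{\f32})}$ takes the form $\int_0^t\|u_L^3\|_{B^{\f32}}^2\|v\|_{B^{\f12}}^2\,dt'$, so Gronwall's lemma generates precisely the weight $\exp\bigl(C\|u_L^3\|_{\wt{L}^2_t(B^{\f32})}^2\bigr)\leq\exp\bigl(C\|u_0^3\|_{B^{\f12}}^2\bigr)$ appearing in \eqref{small1}, while the remaining terms, being quadratic in $v$, are closed by a standard continuity argument once $\eta$ is small.

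The density term $a(\D u-\na\pi)$ is the chief obstacle. Since $a$ lies only in $L^\infty$, no product rule gains Besov regularity from it, so it must be estimated as $\|a\|_{L^\infty}$ times the norm of $\D u-\na\pi=\r D_t u$, controlled through the material-derivative and weighted quantities $\|\sqrt t\,v\|_{\wt{L}^\infty_t(B^{\f32})}$ and $\|tD_tv\|_{\wt{L}^2_t(B^{\f32})}$ of \eqref{S1eq8}; here the smallness of $\|a_0\|_{L^\infty}$ inside $\eta$ is what closes the loop. The genuine tension is therefore between propagating the \emph{critical} $B^{\f12}$ regularity of $v$ and the fact that the density is merely bounded, while simultaneously confining the large vertical component to the Gronwall exponential rather than requiring its smallness. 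Carrying out the same scheme on the $\sqrt t$- and $t$-weighted equations (obtained by multiplying the $v$-equation by the time weights) produces the remaining norms in \eqref{S1eq8}, and the uniform bound then passes to the limit to furnish the desired weak solution.
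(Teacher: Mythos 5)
Your overall architecture (splitting $u=v+e^{t\D}u_0$, using $\p_3u_0^3=-\dive_\h u_0^\h$ to put a horizontal factor on the dangerous vertical--vertical term, letting $u_0^3$ enter only through a Gronwall exponential, and recovering the weak solution by compactness) agrees with the paper. But the core step of your a priori estimate fails, and it fails exactly at the obstruction this theorem is designed to circumvent. You propose to apply $\ppd_j$ to the $v$-equation, pair with $\ppd_j v$, multiply by $2^{j/2}$ and sum, and to estimate the density term as $\|a\|_{L^\infty}$ times a critical norm of $\D u-\na\pi$. This is not legitimate: multiplication by a function that is merely $L^\infty$ is not bounded on $\dB^{\f12}_{2,1}$ (nor on any positive-regularity Besov space), so $\bigl\|\ppd_j\bigl(a(\D u-\na\pi)\bigr)\bigr\|_{L^2}$ admits no bound of the form $d_j2^{-j/2}\|a\|_{L^\infty}\|\D u-\na\pi\|_{B^{\f12}}$; the best available bound is of regularity zero, which cannot be summed against the weight $2^{j/2}$. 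The classical repair would be a commutator estimate for $[\ppd_j;a]$, which requires $a\in L^\infty\cap\dB^{\f32}_{2,\infty}$ with smallness in the Besov norm --- precisely the hypothesis Theorem \ref{thm2} drops, as is spelled out at the start of Section \ref{Sect2}. There is also a circularity in your fallback: the weighted quantities $\|\sqrt{t}\,v\|_{\wt{L}^\infty_t(B^{\f32})}$ and $\|tD_tv\|_{\wt{L}^2_t(B^{\f32})}$ that you invoke to control $\D u-\na\pi$ are themselves among the norms being established in the same bootstrap, and the time weights give no smallness near $t=0$ at critical regularity, so the loop cannot close even formally.

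The paper's proof avoids localizing the equation altogether: it localizes the \emph{data}. One constructs $(v_j,\na\pi_j)$ solving the variable-coefficient system \eqref{S4eq3}, whose only source terms are $\r F_j$ and $\r a\D e^{t\D}\D_ju_0$; the frequency cut-off $\D_j$ rides on the heat flow of the initial data (placed on $u_0^\h$ inside $F_j$, which is where $\dive u_0=0$ is crucially used), so the rough factor $a$ is estimated simply by $\|a_0\|_{L^\infty}$ against an already-localized smooth function, while the rough coefficient $\r$ enters the classical, unlocalized weighted $L^2$ energy estimates for $v_j$ only through its upper and lower bounds. By uniqueness of smooth solutions, $v=\sum_{j\in\Z}v_j$, and the critical bounds \eqref{S1eq8} are then recovered by resummation in the spirit of Lemma \ref{S2lem1} (Lemma 2.64 of \cite{BCD}), combining the $L^2$ estimates of $v_{j'}$ for $j'\geq j$ with the $\dH^1$ and weighted estimates for $j'\leq j$. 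If you replace your $\ppd_j$-energy scheme by this data-decomposition scheme, the remaining ingredients of your proposal (anisotropic splitting, the exponential weight $\exp\bigl(C\|u_0^3\|_{B^{\f12}}^{2}\bigr)$ produced by Gronwall, and the compactness passage to a weak solution) are sound and match the paper.
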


The idea of the proof to Theorem \ref{thm1} can also be used to prove the
 following Theorem concerning the lifespan of smooth enough solution to \eqref{S1eq1}, which in particular
  generalize the corresponding result for classical Navier-Stokes system (see Proposition 1.1 of \cite{CG}) to the inhomogeneous context.

\begin{thm}\label{thm3}
{\sl Let $\r_0$ satisfy \eqref{S1eq2}, $u_0\in H^{\f12+2\ga}$ for $\ga\in ]0,1/4].$ Then \eqref{S1eq1} has a unique solution
$(\r, u)$ on $[0,T]$ so that $u\in C([0,T]; H^{\f12+2\ga})\cap L^2(]0,T[; \dH^{\f32+2\ga}).$
 We denote $T^\ast(u_0)$ to be the maximal time of existence of such solution.
Then there exists a constant $c_\ga$ so that
                              \beq \label{S1eq10}
T^\ast(u_0)\geq c_\ga \|u_0\|_{\dH^{\f12+2\ga}}^{-\f1\ga}\eqdefa T_\ga.
\eeq    Moreover, for $t\leq T_\ga,$ there holds
\beq \label{S1eq16}
\begin{split}
\|u\|_{\wt{L}^\infty_t(\dH^{\f12+2\ga})}&+\|u\|_{L^2_t(\dH^{\f32+2\ga})}+ \|\sqrt{t} u\|_{\wt{L}^\infty_t(\dH^{\f32+2\ga})}+\|\sqrt{t}u_t\|_{L^2_t(\dH^{\f12+2\ga})}\\
&\qquad+\|tD_tu\|_{\wt{L}^\infty_t(\dH^{\f12+2\ga})}+\|t D_tu\|_{\wt{L}^2_t(\dH^{\f32+2\ga})}\leq C\|u_0\|_{\dH^{\f12+2\ga}}.
\end{split}
\eeq
  }
\end{thm}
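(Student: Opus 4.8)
The plan is a continuation argument combined with scale-invariant a priori estimates. The local theory (constructed exactly as in the proof of Theorem \ref{thm1}) produces a unique solution on a maximal interval $[0,T^\ast(u_0))$, so it suffices to show that the left-hand side of \eqref{S1eq16} stays finite on $[0,T_\ga]$ with a bound depending only on $\|u_0\|_{\dH^{\f12+2\ga}}$ and on $(c_0,C_0)$; the associated blow-up criterion then forces $T^\ast(u_0)\ge T_\ga$, which is \eqref{S1eq10}. Writing $s\eqdef\f12+2\ga$, I would let scaling dictate the whole argument: under \eqref{S1eq12} the norm $\|u_0\|_{\dH^s}$ picks up the factor $\la^{2\ga}$ while time picks up $\la^{-2}$, so $T^\ast\,\|u_0\|_{\dH^s}^{1/\ga}$ is scale invariant and $T_\ga=c_\ga\|u_0\|_{\dH^s}^{-1/\ga}$ is the only dimensionally admissible lower bound. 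This also fixes how the supercritical excess $2\ga$ must be apportioned when closing each estimate.

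The first step is the lowest-order estimate for $\|u\|_{\wt{L}^\infty_t(\dH^s)}$ and $\|u\|_{L^2_t(\dH^{s+1})}$. Since the density is merely bounded I would avoid differentiating $\r$, reading the momentum equation of \eqref{S1eq1} instead as the stationary Stokes system $-\D u+\na\pi=-\r D_tu$ with $\dive u=0$; its elliptic estimate $\|u\|_{\dH^{s+1}}+\|\na\pi\|_{\dH^{s-1}}\lesssim\|\r D_tu\|_{\dH^{s-1}}$ is coercive thanks to $c_0\le\r\le C_0$ (preserved in time by \eqref{S1eq13} and \eqref{S1eq2}) and supplies both the dissipative velocity norm and the pressure. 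A frequency-localized energy estimate in $\dH^s$ then produces $\f{d}{dt}\|u\|_{\dH^s}^2+c\|u\|_{\dH^{s+1}}^2$, and the only genuinely nonlinear contribution is the convection $u\cdot\na u=\dive(u\otimes u)$. Using the product law $\dH^{1+\ga}\cdot\dH^{1+\ga}\hookrightarrow\dH^{s}$ together with the interpolation $\|u\|_{\dH^{1+\ga}}\lesssim\|u\|_{\dH^s}^{\f12+\ga}\|u\|_{\dH^{s+1}}^{\f12-\ga}$, I would bound this term by $\|u\|_{\dH^s}^{1+2\ga}\|u\|_{\dH^{s+1}}^{2-2\ga}$, so that
\beq
\f{d}{dt}\|u\|_{\dH^s}^2+c\|u\|_{\dH^{s+1}}^2\lesssim \|u\|_{\dH^s}^{1+2\ga}\|u\|_{\dH^{s+1}}^{2-2\ga}.
\eeq
Young's inequality with conjugate exponents $(1/\ga,\,1/(1-\ga))$ absorbs the dissipation and leaves $\f{d}{dt}\|u\|_{\dH^s}^2\lesssim\|u\|_{\dH^s}^{(1+2\ga)/\ga}$, a superlinear differential inequality whose solution stays comparable to $\|u_0\|_{\dH^s}^2$ exactly up to $c_\ga\|u_0\|_{\dH^s}^{-1/\ga}=T_\ga$. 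This single step simultaneously produces \eqref{S1eq10} and the first two norms of \eqref{S1eq16}.

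The remaining time-weighted norms I would obtain by a bootstrap cascade, mirroring the treatment of \eqref{S1eq7} in Theorem \ref{thm1}. Because $u_0$ lies only in $\dH^s$, regularity above that level must be bought with powers of $t$ furnished by parabolic smoothing: testing the $\dH^{s+1}$ energy identity against the weight $t$ gives $\|\sqrt t\,u\|_{\wt{L}^\infty_t(\dH^{s+1})}$, and then through the Stokes system $\|\sqrt t\,u_t\|_{L^2_t(\dH^s)}$, the extra factor $t$ compensating the commutators and the term $\|u\|_{\dH^{s+1}}^2$ already controlled at the previous level. Finally, applying $D_t$ to the momentum equation, using that $D_t$ annihilates the transported density and that $[D_t,\D]$ and $[D_t,\na]$ generate only terms linear in $\na u$ (estimable from the lower levels), and pairing against $t^2 D_t u$, I would close $\|tD_tu\|_{\wt{L}^\infty_t(\dH^s)}$ together with $\|tD_tu\|_{\wt{L}^2_t(\dH^{s+1})}$. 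All of these are energy estimates with the same interpolation structure as the first step, so each remains finite on $[0,T_\ga]$ once the lowest level is.

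The main obstacle, as in the rough-density works \cite{HPZ2, PZZ1}, is that $\r$ carries no Besov regularity, so every term in which the density multiplies a high derivative of $u$ (and which would naively call for $\na\r$) must be rearranged so that derivatives fall only on $u$ and $\pi$; this is precisely why the velocity regularity and the pressure are recovered from the Stokes system with $L^\infty$ coefficient rather than by direct differentiation, and why the material-derivative form $\r D_tu=\D u-\na\pi$ is central throughout. Propagating $\na\pi$ in the fractional, time-weighted norms $\dH^{s-1}$ and $L^2_t(\dH^s)$ without ever losing a derivative onto the density is the delicate analytic point, to be controlled by the boundedness of $\r$ and by the shortness $t\le T_\ga$ in place of any smallness of the data. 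The second point is elementary but equally essential: the interpolation in the convection estimate must place the dissipative norm at the exact power $2$ so that the resulting ODE exponent is $1+\f1{2\ga}$, since any other split would integrate to a lifespan with the wrong power of $\|u_0\|_{\dH^s}$, breaking the scale invariance that \eqref{S1eq10} encodes.
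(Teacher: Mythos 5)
Your quantitative skeleton is right, but the first and central step of your proof cannot be carried out as stated, and everything downstream inherits the defect. A ``frequency-localized energy estimate in $\dH^{s}$'' performed on $u$ itself requires commuting the localization operator with the rough coefficient: to extract $\f{d}{dt}\|u\|_{\dH^s}^2+c\|u\|_{\dH^{s+1}}^2$ from the momentum equation you must write $\D_j(\r\p_tu)=\r\p_t\D_ju+[\D_j;\r]\p_tu$ (or, in the formulation \eqref{S4eq1}, estimate $\D_j[a(\D u-\na\pi)]$), and controlling that commutator needs $a=\f1\r-1$ in a Besov space of the order $\dB^{\f32}_{2,\infty}$ --- precisely the obstruction spelled out in Section~\ref{Sect2}, which is why the classical program of \cite{abidi, danchin} is abandoned in this paper. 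Your Stokes step suffers from the same disease: the elliptic estimate with right-hand side $\|\r D_tu\|_{\dH^{s-1}}$ is useful only if you can replace it by $C_0\|D_tu\|_{\dH^{s-1}}$, but multiplication by an $L^\infty$ function is \emph{not} bounded on $\dH^{s-1}$; here $s-1=2\ga-\f12\le 0$, with strictly negative regularity for $\ga<\f14$, and $c_0\le\r\le C_0$ gives no multiplier bound there (only the integer case $s-1=0$, i.e.\ the endpoint $\ga=\f14$ at the zero-derivative level, survives). Consequently neither your basic $\dH^s$ differential inequality, nor the $t$-weighted $\dH^{s+1}$ identity, nor the $D_t$-estimate ``paired against $t^2D_tu$'' can actually be derived with $\r_0$ merely bounded; your closing paragraph acknowledges the difficulty but the proposed remedy (deriving fractional estimates from the Stokes system with $L^\infty$ coefficient) does not remove it.

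The paper never performs a fractional energy estimate on $u$. Its structural idea --- absent from your proposal --- is to frequency-localize the \emph{data} rather than the equation: one solves the linear problems \eqref{S3eq1} with initial data $\D_ju_0$, so that $u=\sum_{j\in\Z}u_j$, and performs only $L^2$- and $\dH^1$-level (plain, $t$-weighted, and $D_t$-twisted) energy estimates on each block, in which $\r$ enters solely through the harmless weight $\sqrt{\r}$ and no commutator with $\D_j$ or $\r$ ever appears; the fractional norms of \eqref{S1eq16} are then reassembled via the mechanism of Lemma~\ref{S2lem1}, summing $\|\D_ju_{j'}\|$ over $j'\gtrless j$ with Bernstein as in \eqref{S5eq18}. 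Within that scheme your estimates do reappear almost verbatim: your convection interpolation is, per block, exactly \eqref{S5eq16}, $\|u\cdot\na u_j\|_{L^2}\le C\|u\|_{\dH^{\f12+2\ga}}\|\na u_j\|_{L^2}^{2\ga}\|\na^2u_j\|_{L^2}^{1-2\ga}$, followed by Young with exponent $\f1\ga$; and your superlinear-ODE arithmetic reproduces the correct lifespan scaling $T_\ga\sim\|u_0\|_{\dH^{\f12+2\ga}}^{-\f1\ga}$ of \eqref{S1eq10} (the paper runs it instead as a Gronwall factor $\exp\bigl(\f{C}\ga\int_0^t\|u\|_{\dH^{\f12+2\ga}}^{\f1\ga}\,dt'\bigr)$ together with the continuity argument on \eqref{S5eq4}, which is the same scaling content). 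So what is missing is not the exponents but the one idea that makes any of the estimates derivable in the bounded-density setting.
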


\begin{rmk}
When $\r_0=1$ and $\ga=\f14,$ \eqref{S1eq10} corresponds to the celebrated Leray  estimate
on the lifespan of strong solutions to the classical Navier-Stokes system in \cite{Leray}.
\end{rmk}

\medbreak  Let us end this introduction by some notations that will
be used in all that follows.

For operators $A,B,$ we denote $[A;B]=AB-BA$ to be commutator of $A$ and $B.$ For~$a\lesssim
b$, we mean that there is a uniform constant $C,$ which may be
different on different lines, such that $a\leq Cb$.  We denote by
$\int_{\R^3}f|g\,dx$ the $L^2(\R^3)$ inner product of $f$ and $g$. For $X$
a Banach space and $I$ an interval of $\R,$ we denote by $C(I;\,X)$
the set of continuous functions on~$I$ with values in $X.$    For
$q$ in~$[1,+\infty],$ the notation $L^q(I;\,X)$ stands for the set
of measurable functions on $I$ with values in $X,$ such that
$t\longmapsto\|f(t)\|_{X}$ belongs to $L^q(I).$ Finally we always denote $(d_j)_{j\in\Z}$ (resp. $(c_j)_{j\in\Z}$) to be a generic element of $\ell^1(\Z)$
(resp. $\ell^2(\Z)$) so that
$\sum_{j\in\Z}d_j=1$ (resp. $\sum_{j\in\Z}c_j^2=1$).

\medskip

 \setcounter{equation}{0}
\section{ Ideas of the proof and structure of the paper}\label{Sect2}

First of all, let us recall that the classical idea to prove the local wellposedness of \eqref{S4eq1} in the
{\it critical functional  framework} in \cite{abidi, danchin} is first to apply the operator $\D_j$ (see \eqref{S1eq5}) to the
momentum equation of \eqref{S4eq1} and then taking $L^2$ inner product of the result equation with $\D_ju,$ which gives
\beno
\f12\f{d}{dt}\|\D_j u(t)\|_{L^2}^2+\|\na\D_j u\|_{L^2}^2=\bigl(\D_j(u\cdot\na u) | \D_j u\bigr)+\bigl(\D_j[a(\Delta u-\na \pi)] | \D_j u\bigr).
\eeno
The next idea is to apply the commutative estimate for
\beno
[\D_j; a] (\Delta u-\na \pi) \eeno
and the smallness condition for $a$ in the critical Besov space, $\dB^{\f32}_{2,\infty},$ to deal with
the term $\bigl(\D_j[a(\Delta u-\na \pi)] | \D_j u\bigr)$ as a perturbation term of the left-hand side.
To go through this process, one needs $a\in L^\infty\cap B^{\f32}_{2,\infty}.$ Nevertheless, here we only assume that $a_0$ belongs to the bounded
function space. Hence the aforementioned program does not work for the proof of Theorems \ref{thm1} to \ref{thm3} here and the results in \cite{CZZ16,HPZ2, PZZ1}.

The main idea to the proof  of Theorems \ref{thm1} to \ref{thm3} is motivated by the following lemma and its proof in \cite{BCD}:

\begin{lem}[Lemma 2.64 of \cite{BCD}]\label{S2lem1}
{\sl Let $s$ be a positive real number and $(p,r)$ be in $[1,\infty]^2.$ A constant $C_s$ exits such that
if $(u_j)_{j\in\Z}$ is a sequence of smooth functions where $\sum_{j\in\Z}u_j$ converges to $u$ in $\cS_h'$ and
\beno
N_s\bigl((u_j)_{j\in\Z}\bigr)\eqdefa \bigl\|\bigl(\sup_{|\al|\in \{0,[s]+1\}}2^{j(s-|\al|)}\|\p^\al u_j\|_{L^p}\bigr)_{j\in\Z}\bigr\|_{\ell^r(\Z)}<\infty,
\eeno
then $u$ is in $\dB^s_{p,r}$ and $\|u\|_{\dB^s_{p,r}}\leq C_sN_s\bigl((u_j)_{j\in\Z}\bigr).$}
\end{lem}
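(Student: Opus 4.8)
The plan is to estimate directly the homogeneous Besov norm of $u$, that is, to bound $\bigl\|(2^{qs}\|\ppd_q u\|_{L^p})_{q\in\Z}\bigr\|_{\ell^r}$, by exploiting the two pieces of information encoded in $N_s$: the $L^p$-size of each $u_j$ (the $|\al|=0$ term), which controls its low-frequency content, and the $L^p$-size of its derivatives of order $N\eqdef[s]+1$ (the $|\al|=N$ term), which controls its high-frequency content. Writing $a_j\eqdef\sup_{|\al|\in\{0,N\}}2^{j(s-|\al|)}\|\p^\al u_j\|_{L^p}$, so that $\|(a_j)_{j\in\Z}\|_{\ell^r}=N_s\bigl((u_j)_{j\in\Z}\bigr)$, the hypotheses read $\|u_j\|_{L^p}\leq 2^{-js}a_j$ and $\|\p^\al u_j\|_{L^p}\leq 2^{j(N-s)}a_j$ for $|\al|=N$. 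Since $\sum_j u_j$ converges to $u$ in $\cS_h'$ and $\ppd_q$ is continuous on that space, I would first record that $\ppd_q u=\sum_{j\in\Z}\ppd_q u_j$, so that it suffices to estimate each $\ppd_q u_j$ and then sum in $j$.

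The heart of the argument is a \emph{two-regime} bound for $\|\ppd_q u_j\|_{L^p}$. When $j>q$ I would use only the $L^p$-size of $u_j$: since $\ppd_q$ is convolution with a fixed Schwartz kernel rescaled at frequency $2^q$, Young's inequality gives $\|\ppd_q u_j\|_{L^p}\lesssim \|u_j\|_{L^p}\leq 2^{-js}a_j=2^{-qs}2^{-s(j-q)}a_j$, and the factor $2^{-s(j-q)}$ decays because $s>0$. When $j\leq q$ I would instead extract $N$ derivatives from the Littlewood-Paley multiplier: because the symbol $\varphi(2^{-q}\cdot)$ of $\ppd_q$ is supported in an annulus $|\xi|\sim 2^q$, one can write $\varphi(\eta)=\sum_{|\al|=N}\theta_\al(\eta)(i\eta)^\al$ with smooth, compactly supported $\theta_\al$, which after rescaling yields $\ppd_q u_j=2^{-qN}\sum_{|\al|=N}g_{q,\al}\ast\p^\al u_j$ for Schwartz kernels $g_{q,\al}$ of uniformly bounded $L^1$ norm. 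Hence $\|\ppd_q u_j\|_{L^p}\lesssim 2^{-qN}\sum_{|\al|=N}\|\p^\al u_j\|_{L^p}\lesssim 2^{-qN}2^{j(N-s)}a_j=2^{-qs}2^{-(N-s)(q-j)}a_j$, and now the factor $2^{-(N-s)(q-j)}$ decays because $N-s>0$ by the choice $N=[s]+1$.

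Combining the two regimes, I would obtain $2^{qs}\|\ppd_q u\|_{L^p}\lesssim \sum_{j\in\Z}K(q-j)\,a_j$, where $K(k)=2^{-(N-s)k}$ for $k\geq 0$ and $K(k)=2^{sk}$ for $k<0$ defines a fixed sequence in $\ell^1(\Z)$ whose norm depends only on $s$. Taking the $\ell^r(\Z)$ norm in $q$ and applying the discrete Young inequality for the convolution $K\ast(a_j)$ then gives $\|u\|_{\dB^s_{p,r}}\lesssim \|K\|_{\ell^1}\,\|(a_j)\|_{\ell^r}=C_sN_s\bigl((u_j)_{j\in\Z}\bigr)$, which is the claimed bound; in particular $u\in\dB^s_{p,r}$. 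Note that this works for every $(p,r)\in[1,\infty]^2$ since both Young inequalities are insensitive to the endpoints.

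The genuinely delicate points are not the estimates, which are routine once the kernel decomposition is in place, but the functional-analytic justifications special to the homogeneous setting: one must make sense of $u$ as an element of $\cS_h'$ (tempered distributions modulo the low-frequency/polynomial ambiguity) and legitimately interchange $\ppd_q$ with the infinite sum over $j\in\Z$. This is exactly the role of the hypothesis that $\sum_j u_j$ converges in $\cS_h'$, and I expect verifying this interchange to be the main obstacle. The only arithmetic that matters is the strict inequality $N-s>0$: it is what guarantees that the high-frequency regime contributes a geometrically decaying kernel, so that $K\in\ell^1$ and the final Young inequality closes.
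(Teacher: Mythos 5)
Your proof is correct and follows essentially the same route as the source: the paper quotes this lemma from \cite{BCD} without reproving it, and your two-regime estimate (Young's inequality with the plain $L^p$ bound for $j>q$, the kernel factorization $\varphi(\eta)=\sum_{|\al|=N}\theta_\al(\eta)(i\eta)^\al$ extracting $N=[s]+1$ derivatives for $j\leq q$, then discrete Young with the $\ell^1$ kernel $K$) is precisely the argument of Lemma 2.64 in \cite{BCD}. It is also exactly the mechanism the paper itself deploys in applications, e.g.\ the summation $\sum_{j'\geq j}(\cdot)+2^{-j}\sum_{j'\leq j}(\cdot)$ in the proof of Proposition 2.2, which is your kernel estimate specialized to $s=\f12$, $N=1$.
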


More precisely,  we shall explain how to combine the energy method
with the proof of Lemma \ref{S2lem1} in \cite{BCD} to  prove  the following proposition:

\begin{prop}\label{S3prop1}
{\sl Let $(\r, u)$ be a smooth enough solution of \eqref{S1eq1} on $[0,T^\ast[.$ Then under the assumption of \eqref{S1eq2} and \eqref{S1eq3},
 we have \eqref{S1eq14} and
\beq
\|u\|_{\wt{L}^\infty_t(B^{\f12})}+\|\na u\|_{\wt{L}^2_t(B^{\f12})}\leq C\|u_0\|_{B^{\f12}}\quad\mbox{for}\ \ t<T^\ast.
 \label{S3eq7}
 \eeq
}
\end{prop}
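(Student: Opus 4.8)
The bound \eqref{S1eq14} needs no smallness and follows from the transport structure alone: for a smooth solution the divergence free field $u$ generates a measure preserving flow along which $\r$ is merely carried, so $\r(t,\cdot)$ is an equimeasurable rearrangement of $\r_0$, and the first feature \eqref{S1eq13} gives $c_0\le\r(t,x)\le C_0$. Equivalently, writing $a\eqdefa\f1\r-1$ as in \eqref{S4eq1}, one has $\|a(t)\|_{L^\infty}=\|a_0\|_{L^\infty}\le\max\bigl(\f1{c_0}-1,\,1-\f1{C_0}\bigr)$ for every $t$; this bound will be used constantly, but note that it is \emph{of order one, not small}, so $a$ can be neither differentiated (its Besov regularity is unknown) nor thrown onto the right hand side as a small perturbation. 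This is precisely why the Littlewood--Paley scheme of \cite{abidi,danchin}, which would need $a\in\dB^{\f32}_{2,\infty}$ to close the commutator $[\Delta_j;a]\D u$, is unavailable here.

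For \eqref{S3eq7} the plan is to avoid localising the equation in frequency, and instead to follow the proof of Lemma \ref{S2lem1}. I would decompose $u=\sum_m u_m$, convergent in $\cS_h'$, where each profile $u_m$ solves the \emph{linear} transport--diffusion system $\r\p_tu_m+\r\,u\cdot\na u_m-\D u_m+\na\pi_m=0$, $\dive u_m=0$, with the drift $u$ and the coefficient $\r$ frozen from the solution and Cauchy datum $u_m|_{t=0}=\Delta_m u_0$; by linearity and $\sum_m\Delta_m=\Id$ one recovers $\sum_m u_m=u$. The decisive structural point is that the $\r$-weighted energy estimate for this linear system is \emph{clean}: testing against $u_m$ and using $\p_t\r=-u\cdot\na\r$ together with $\dive u=0$, exactly the same cancellations that produce the energy law \eqref{S1eq11} make every term carrying a derivative of $\r$ disappear, leaving $\f12\f{d}{dt}\int_{\R^3}\r|u_m|^2\,dx+\|\na u_m\|_{L^2}^2=0$. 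No commutator $[\Delta_j;\r]$ and no $\na\r$ ever appear, so only the $L^\infty$ bounds on $\r$ are used, and the dissipation is coercive with a constant depending only on $c_0,C_0$.

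Because the drift $u\cdot\na$ and the coefficient $\r$ mix frequencies, the profile $u_m$ is \emph{not} spectrally localised, so the ordinary Littlewood--Paley bookkeeping cannot reassemble the critical norm from the $\{u_m\}$ --- this is exactly the situation Lemma \ref{S2lem1} is built for. Running the higher order analogues of the clean estimate above (testing with $\p_tu_m$ and with $-\D u_m$, and invoking the heat type smoothing of Lemma \ref{S4lem0}) I would control $u_m$ in $L^2$ (its value) and $\na u_m$ in $L^2$ (its top derivative), uniformly in $c_0,C_0$ and with no derivative ever falling on $a$. Feeding these two \emph{independent} bounds into Lemma \ref{S2lem1} with $s=\f12$, so that $[s]+1=1$, reconstructs $\|u\|_{\wt L^\infty_t(B^{\f12})}$ and, in the same way, $\|u\|_{\wt L^2_t(B^{\f32})}$; the mismatch between the value bound and the gradient bound is absorbed by the convolution in $m$ at the heart of that lemma. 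The order one coefficient $a$ is thus accommodated entirely through the exact cancellation in the energy identity and the uniform coercivity of the dissipation, never through smallness of $\|a\|_{L^\infty}$.

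It remains to close the pressure and the nonlinearity. The pressure $\pi_m$ is recovered from the divergence of the momentum equation, namely the elliptic problem $\dive\bigl[\tfrac1\r\na\pi_m\bigr]=\dive\bigl[\tfrac1\r\D u_m\bigr]-\dive(u\cdot\na u_m)$; since $\f1\r\in[\f1{C_0},\f1{c_0}]$ this operator is uniformly elliptic with merely $L^\infty$ coefficients, so $\na\pi_m$ is controlled in the same spaces as $\D u_m$, again without any regularity of $a$. The genuine nonlinearity enters only through the drift in the higher order estimates, via terms such as $\int_{\R^3}\r\,(u\cdot\na u_m)\cdot\p_tu_m\,dx$; after summation in $m$ these are quadratic in $u$, and using $u\cdot\na u=\dive(u\otimes u)$ together with the product law $\|u\otimes u\|_{\wt L^2_t(B^{\f12})}\lesssim\|u\|_{\wt L^\infty_t(B^{\f12})}\|u\|_{\wt L^2_t(B^{\f32})}$ (valid since $B^{\f12}=\dB^{\f12}_{2,1}$ is scaling critical in $\R^3$) they are absorbable once $\|u_0\|_{B^{\f12}}$ is small. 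Collecting everything yields, for every $t<T^\ast$, an inequality of the form $\|u\|_{\wt L^\infty_t(B^{\f12})}+\|u\|_{\wt L^2_t(B^{\f32})}\le C\|u_0\|_{B^{\f12}}+C\bigl(\|u\|_{\wt L^\infty_t(B^{\f12})}+\|u\|_{\wt L^2_t(B^{\f32})}\bigr)^2$ with $C=C(c_0,C_0)$, which a continuity argument in $t$ closes under \eqref{S1eq3} to give \eqref{S3eq7} (recalling $\|\na u\|_{\wt L^2_t(B^{\f12})}\simeq\|u\|_{\wt L^2_t(B^{\f32})}$). I expect the main obstacle to be the uniform higher order control of $\na u_m$ in $L^\infty_t(L^2)$ for the non-localised profiles in the presence of the rough drift and coefficient, and the verification that the resulting weighted bounds are summable in exactly the sense demanded by Lemma \ref{S2lem1}.
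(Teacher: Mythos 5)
Your proposal is essentially the paper's own proof: the same decomposition of $u$ into profiles $u_j$ solving the linear momentum equation with the density and drift frozen from the solution and data $\Delta_j u_0$ (identified with $u$ by uniqueness for the linear problem), the same clean $\rho$-weighted energy estimates using only $c_0\le\rho\le C_0$, second-order control by testing with $\partial_t u_j$ combined with the Stokes regularity estimate for $(\nabla^2 u_j,\nabla\pi_j)$, reassembly of the critical norms in the manner of Lemma~\ref{S2lem1} (Bernstein splitting over $j'\ge j$ and $j'\le j$), and closure by smallness plus a continuity argument, exactly as in the paper's bootstrap on $\|u\|_{L^\infty_t(\dot H^{\f12})}\le c_1$. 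One small caveat: your parenthetical suggestion of also testing with $-\Delta u_j$ (and invoking Lemma~\ref{S4lem0}) is neither needed nor workable, since $\int_{\R^3}\rho\,\partial_t u_j\cdot\Delta u_j\,dx$ does not produce a clean time derivative without differentiating $\rho$; the second-order and pressure bounds must come solely from the Stokes estimate, which your proposal in any case already contains.
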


\begin{proof} We first deduce from the classical theory on transport equation and \eqref{S1eq2} that
there holds \eqref{S1eq14} for $t<T^\ast.$

We now consider the coupled system of $(u_j,\na\pi_j)$ as follows:
\begin{equation}\label{S3eq1}
\left\{
\begin{array}{ll}
\rho\bigl(\p_tu_j+u\cdot\na u_j\bigr)-\Delta u_j+\na \pi_j=0, \\
\textrm{div} u_j=0,\\
u_j|_{t=0}=\D_ju_0.
\end{array}
\right.
\end{equation}
Then we deduce from  the uniqueness of local smooth solution to \eqref{S1eq1} that
\beq\label{S3eq1a}
u=\sum_{j\in\Z} u_j,\andf \na\pi=\sum_{j\in\Z}\na\pi_j.
\eeq

By taking $L^2$ inner product of the momentum equation of \eqref{S3eq1} with $u_j$ and using
the transport equation of \eqref{S1eq1}, we write
\beno
\f12\f{d}{dt}\int_{\R^3}\rho |u_j|^2\,dx+\int_{\R^3}|\na u_j|^2\,dx=0.
\eeno
Integrating the above equation over $[0,t]$ leads to
\beno
\f12\bigl\|\sqrt{\rho}u_j(t)\bigr\|_{L^2}^2+\|\na u\|_{L^2_t(L^2)}^2=\f12\bigl\|\sqrt{\rho_0}\D_j u_0\bigr\|_{L^2}^2,
\eeno
and thus there holds
\beq \label{S3eq2}
\|u_j\|_{L^\infty_t(L^2)}+\|\na u\|_{L^2_t(L^2)}\leq C\|\D_j u_0\|_{L^2}\leq Cd_j2^{-\f{j}2}\|u_0\|_{B^{\f12}}.
\eeq

Whereas taking $L^2$ inner product of the momentum equation of \eqref{S3eq1} with $\p_tu_j$ gives
\beq\label{S3eq2a}
\begin{split}
\f12\f{d}{dt}\|\na u_j(t)\|_{L^2}^2+\bigl\|\sqrt{\rho}\p_tu_j\bigr\|_{L^2}^2=&-\int_{\R^3}\r u\cdot\na u_j | \p_t u_j\,dx\\
\leq & C\|u\|_{L^3}\|\na u_j\|_{L^6}\bigl\|\sqrt{\rho}\p_tu_j\bigr\|_{L^2}\\
\leq &C\|u\|_{\dH^{\f12}}\|\na^2u_j\|_{L^2}\bigl\|\sqrt{\rho}\p_tu_j\bigr\|_{L^2}.
\end{split}
\eeq
On the the other hand, the momentum equation of \eqref{S3eq1} can be reformulated as
\beq \label{S3eq2b}
\left\{
\begin{array}{ll}
-\Delta u_j+\na \pi_j=-
\rho\p_tu_j-\r u\cdot\na u_j, \\
\textrm{div} u_j=0,
\end{array}
\right.
\eeq
from which and the law of product \eqref{S3eq0}, we infer
\beq \label{S3eq3}
\begin{split}
\|\na^2u_j\|_{L^2}+\|\na\pi_j\|_{L^2}\leq &C\bigl(\|\sqrt{\r}\p_tu_j\|_{L^2}+\|u\cdot\na u_j\|_{L^2}\bigr)\\
\leq &C\bigl(\|\sqrt{\r}\p_tu_j\|_{L^2}+\|u\|_{\dot H^{\f12}}\|\na^2 u_j\|_{L^2}\bigr).
\end{split}
\eeq
Let us denote
\beq \label{S3eq4}
T^\star_1\eqdefa \sup\bigl\{\ t<T^\ast:\ \|u\|_{L^\infty_t(\dot H^{\f12})} \leq c_1\ \bigr\}.
\eeq
Then for $c_1$ being so small that $Cc_1\leq \f12$ and for $t\leq T^\star_1,$ we have
\beq \label{S3eq5}
\|\na^2u_j\|_{L^2}+\|\na\pi_j\|_{L^2}\leq C\|\sqrt{\r}\p_tu_j\|_{L^2}.
\eeq
Inserting the above inequality into \eqref{S3eq2a} yields
\beno
\f12\f{d}{dt}\|\na u_j(t)\|_{L^2}^2+\bigl\|\sqrt{\rho}\p_tu_j\bigr\|_{L^2}^2
\leq  C\|u\|_{\dot H^{\f12}}\bigl\|\sqrt{\rho}\p_tu_j\bigr\|_{L^2}^2\quad \mbox{for}\ \ t\leq T^\star_1.
\eeno
Then for $c_1$ in \eqref{S3eq4} being so small that $Cc_1\leq \f12,$ we achieve
\beno
\f{d}{dt}\|\na u_j(t)\|_{L^2}^2+\bigl\|\sqrt{\rho}\p_tu_j\bigr\|_{L^2}^2\leq 0 \quad \mbox{for}\ \ t\leq T^\star_1,
\eeno
from which and \eqref{S3eq5}, we deduce that
\beq
\label{S3eq6}
\begin{split}
\|\na u_j\|_{L^\infty_t(L^2)}+&\bigl\|(\sqrt{\rho}\p_tu_j,\na^2u_j,\na\pi_j)\bigr\|_{L^2_t(L^2)}\\
\leq& C\|\na u_j(0)\|_{L^2}
\leq C\|\na\D_j u_0\|_{L^2}\leq Cd_j2^{\f{j}2}\|u_0\|_{B^{\f12}}.
\end{split}
\eeq

In view of \eqref{S3eq1a}, \eqref{S3eq2} and \eqref{S3eq6}, we get, by applying Bernstein inequality,   that
\beno
\begin{split}
\|\D_ju\|_{L^\infty_t(L^2)}+\|\na\D_ju\|_{L^2_t(L^2)}
\lesssim &\sum_{j'\geq j}\bigl(\|\D_{j}u_{j'}\|_{L^\infty_t(L^2)}+\|\na\D_{j}u_{j'}\|_{L^2_t(L^2)}\bigr)\\
&+2^{-j}\sum_{j'\leq j}\bigl(\|\na\D_{j}u_{j'}\|_{L^\infty_t(L^2)}+\|\na^2\D_{j}u_{j'}\|_{L^2_t(L^2)}\bigr)\\
\lesssim &\sum_{j'\geq j}\bigl(\|u_{j'}\|_{L^\infty_t(L^2)}+\|\na u_{j'}\|_{L^2_t(L^2)}\bigr)\\
&+2^{-j}\sum_{j'\leq j}\bigl(\|\na u_{j'}\|_{L^\infty_t(L^2)}+\|\na^2 u_{j'}\|_{L^2_t(L^2)}\bigr)\\
\lesssim &d_j2^{-\f{j}2}\|u_0\|_{B^{\f12}}.
\end{split}
\eeno
This implies that \eqref{S3eq7} holds for $t\leq T^\star_1.$
Then taking $\e_0$ in \eqref{S1eq3}  so small that $C\|u_0\|_{B^{\f12}}\leq C\e_0\leq \f{c_1}2,$ for $c_1$ given by \eqref{S3eq4}, we get,
by using a continuous argument, that $T^\star_1$ determined by \eqref{S3eq4} equals any number smaller than $T^\ast.$ This in turn shows \eqref{S3eq7}. \end{proof}

\begin{rmk}
We remark that similar idea was first used by Hmidi and Keraani \cite{HK} for two dimensional incompressible
Euler system, which also works (without change) for the transport diffusion equation. In the inhomogeneous context,
similar idea was used by Liao and the author \cite{LZ2} in order to propagate fractional Besov regularities
for the velocity field of  the two dimensional incompressible inhomogeneous Navier-Stokes system.
\end{rmk}

\begin{rmk}
By virtue of \eqref{S3eq7}, we deduce from the classical theory of inhomogeneous
Navier-Stokes system that $T^\ast=\infty.$
\end{rmk}

Along the same line to the proof of Proposition \ref{S3prop1}, we can also show that
\begin{prop}\label{S3prop2}
{\sl Under the assumptions of Proposition \ref{S3prop1}, we have
\beq
\label{S3eq15}
\|\sqrt{t}\na u\|_{\wt{L}^\infty_t(B^{\f12})}+\|\sqrt{t}u_t\|_{\wt{L}^2_t(B^{\f12})}\leq C\|u_0\|_{B^{\f12}}\quad\mbox{for any} \ t>0. \eeq
}\end{prop}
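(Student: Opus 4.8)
The plan is to stay on exactly the track of Proposition \ref{S3prop1}: keep the decomposition $u=\sum_{j\in\Z}u_j$ with $(u_j,\na\pi_j)$ solving \eqref{S3eq1}, derive \emph{time-weighted} energy estimates for each block $u_j$, and then reconstruct the Chemin--Lerner norms in \eqref{S3eq15} by the same Bernstein summation performed at the end of the proof of Proposition \ref{S3prop1} (equivalently by Lemma \ref{S2lem1}). By Proposition \ref{S3prop1} and the continuation argument, $T^\ast=\infty$ and the smallness $\|u\|_{L^\infty_t(\dH^{\f12})}\le c_1$ holds for all $t$, so every estimate below is global.

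\textbf{Step 1 (first weighted estimate).} Starting from the identity \eqref{S3eq2a} (testing the momentum equation of \eqref{S3eq1} against $\p_tu_j$), I multiply by $t$ and use $t\f{d}{dt}\|\na u_j\|_{L^2}^2=\f{d}{dt}\bigl(t\|\na u_j\|_{L^2}^2\bigr)-\|\na u_j\|_{L^2}^2$. Integrating on $[0,t]$, the leftover term $\f12\int_0^t\|\na u_j\|_{L^2}^2\,dt'$ is controlled by \eqref{S3eq2} (hence $\lesssim d_j^2 2^{-j}\|u_0\|_{B^{\f12}}^2$), while the nonlinear term is dominated by $C\int_0^t t'\|u\|_{\dH^{\f12}}\|\sqrt\rho\,\p_tu_j\|_{L^2}^2\,dt'$ after inserting the elliptic bound \eqref{S3eq5}, and is absorbed by the dissipation thanks to $Cc_1\le\f12$. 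This gives
\[
\|\sqrt{t}\,\na u_j\|_{L^\infty_t(L^2)}+\bigl\|\sqrt{t}\,(\sqrt\rho\,\p_tu_j,\na^2u_j,\na\pi_j)\bigr\|_{L^2_t(L^2)}\le Cd_j2^{-\f{j}2}\|u_0\|_{B^{\f12}}.
\]

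\textbf{Step 2 (second weighted estimate).} To rebuild the $\wt L^\infty_t(B^{\f12})$ norm of $\sqrt t\,\na u$ I also need $\sqrt t\,\na^2u_j$ in $L^\infty_t(L^2)$, and for $\sqrt t\,u_t$ in $\wt L^2_t(B^{\f12})$ the quantity $\sqrt t\,\na\p_tu_j$ in $L^2_t(L^2)$; both require one more differentiation of the equation. The decisive point is to differentiate \emph{along the flow}: since the density is only bounded, I apply $D_t$ to $\rho D_tu_j=\Delta u_j-\na\pi_j$ and use $D_t\rho=0$, obtaining $\rho D_t^2u_j=\Delta D_tu_j-\na D_t\pi_j+[u\cdot\na;\Delta]u_j-[u\cdot\na;\na]\pi_j$ without ever differentiating $\rho$. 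Testing against $D_tu_j$ with weight $t$ and integrating, the commutator terms are quadratic in $\na u$ (schematically $\na u\,\na^2u_j$ and $\na u\,\na\pi_j$) and the pressure contribution is handled through $\dive D_tu_j=\na u:\na u_j$; all are estimated by $\|u\|_{L^\infty_t(\dH^{\f12})}\le c_1$ together with the Step-1 bounds and absorbed by the dissipation, while the source $\int_0^t\|\sqrt\rho\,D_tu_j\|_{L^2}^2\,dt'$ is controlled by \eqref{S3eq6} ($\lesssim d_j^22^{j}\|u_0\|_{B^{\f12}}^2$). Combined with \eqref{S3eq5}, this yields
\[
\|\sqrt{t}\,\na^2u_j\|_{L^\infty_t(L^2)}+\|\sqrt{t}\,\na\p_tu_j\|_{L^2_t(L^2)}\le Cd_j2^{\f{j}2}\|u_0\|_{B^{\f12}}.
\]

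\textbf{Step 3 (reconstruction).} Writing $u=\sum_{j'}u_{j'}$ I sum exactly as at the end of the proof of Proposition \ref{S3prop1}, splitting into $j'\ge j$ and $j'<j$ and invoking Bernstein's inequality. For $\|\sqrt t\,\na u\|_{\wt L^\infty_t(B^{\f12})}$ the high frequencies $j'\ge j$ use the $L^\infty_t$ bound on $\sqrt t\,\na u_{j'}$ from Step 1, and the low frequencies $j'<j$ use $2^{-j}\|\sqrt t\,\na^2u_{j'}\|_{L^\infty_t(L^2)}$ from Step 2; the resulting geometric series in $2^{-|j-j'|/2}$ convolved with $(d_{j'})\in\ell^1$ gives $Cd_j2^{-j/2}\|u_0\|_{B^{\f12}}$, which is the claim after multiplying by $2^{j/2}$ and summing in $j$. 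The term $\|\sqrt t\,u_t\|_{\wt L^2_t(B^{\f12})}$ is identical, using the $L^2_t$ bound on $\sqrt t\,\p_tu_{j'}$ (Step 1) for $j'\ge j$ and $2^{-j}\|\sqrt t\,\na\p_tu_{j'}\|_{L^2_t(L^2)}$ (Step 2) for $j'<j$. I expect Step 2 to be the main obstacle: because $\rho$ has no Besov regularity one cannot use paraproducts, and a plain $\p_t$-differentiation would create the uncontrolled $\p_t\rho=-u\cdot\na\rho$; the whole gain comes from letting the material derivative $D_t$ keep $\rho$ undifferentiated, after which the commutator and non-solenoidal pressure terms must be closed purely by the smallness of $\|u\|_{\dH^{\f12}}$ and the estimates of Proposition \ref{S3prop1} and Step 1.
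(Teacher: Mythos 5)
Your Steps 1 and 3 are essentially the paper's own argument (Step 1 is the Corollary giving \eqref{S3eq9}, Step 3 is the Bernstein summation used for \eqref{S3eq7}), but Step 2 contains a genuine gap, and it sits exactly where you located the ``decisive point.'' If you apply $D_t$ to $\r D_tu_j=\D u_j-\na\pi_j$ and test against $D_tu_j$ with weight $t$, the pressure term does not go away: since $\dive D_tu_j={\rm Tr}(\na u\na u_j)\neq 0$, you are left with $t\int_{\R^3}\na D_t\pi_j\,|\,D_tu_j\,dx=-t\int_{\R^3}D_t\pi_j\,{\rm Tr}(\na u\na u_j)\,dx$, and there is no way to close this with only Step-1 information. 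Bounding $\na D_t\pi_j$ through the elliptic equation obtained by taking the divergence of the $D_t$-differentiated system (the analogue of \eqref{S3eq20}) necessarily brings in $\|\r D_t^2u_j\|_{L^2}$, which your energy does not dissipate --- your dissipation is only $\|\na D_tu_j\|_{L^2}^2$; alternatively, writing $D_t\pi_j=\p_t\pi_j+u\cdot\na\pi_j$ requires a bound on $\p_t\pi_j$ (or a time integration by parts producing $\na D_tu$ of the \emph{full} solution), none of which is available yet. The paper confronts this pressure term only in Proposition \ref{S3prop3}, at the higher weight $t$, testing against $D_t^2u_j$ so that the dissipation $\|t\sqrt{\r}D_t^2u_j\|_{L^2_t(L^2)}^2$ can absorb it, and crucially using \eqref{S3eq15}, \eqref{S3eq13}, \eqref{S3eq14} --- i.e.\ the conclusion of the very proposition you are proving --- to control the byproducts; importing that treatment into your Step 2 would be circular.

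Your stated reason for avoiding $\p_t$ is also mistaken, and it is precisely the $\p_t$ route that the paper takes. Differentiating \eqref{S3eq1} in $t$ produces $\r_t=-u\cdot\na\r$, but since $\dive u=0$ every occurrence of $\r_t$ in the energy identity can be rewritten, after integration by parts, with $\r$ \emph{undifferentiated} and the derivative transferred to the other factors (this is exactly how the paper handles $\int\r_t\p_tu_j|\p_tu_j\,dx$ and $\int\r_tu\cdot\na u_j|\p_tu_j\,dx$); so bounded density costs nothing here. The structural payoff of $\p_t$ over $D_t$ is that $\dive\p_tu_j=0$, so the pressure term $\int\na\p_t\pi_j|\p_tu_j\,dx$ vanishes identically in \eqref{S3eq10} --- the difficulty your Step 2 creates simply never arises. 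The price is the extra term $\int\r u_t\cdot\na u_j|\p_tu_j\,dx$ involving $u_t$ of the full solution, which the paper controls by the interpolation bound \eqref{S3eq12}, $\|t^{\f14}u_t\|_{L^2_t(L^2)}\leq C\|u_0\|_{B^{\f12}}$ (interpolating \eqref{S3eq6} against \eqref{S3eq9} blockwise and summing), fed into Gronwall's inequality; this ingredient is absent from your sketch. The resulting bounds \eqref{S3eq13}--\eqref{S3eq14} are exactly the targets of your Step 2, after which your Step 3 goes through; but as written, your Step 2 would need either the $\p_t$ mechanism above or substantial new ideas for the non-solenoidal pressure term.
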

and
\begin{prop}\label{S3prop3}
{\sl Under the assumptions of Proposition \ref{S3prop1}, we have
 \beq
\label{S3eq33}
 \|tD_tu\|_{\wt{L}^\infty_t(B^{\f12})}+\|t\na D_tu\|_{\wt{L}^2_t(B^{\f12})}\leq C\|u_0\|_{B^{\f12}}\quad\mbox{for any} \ t>0. \eeq
}\end{prop}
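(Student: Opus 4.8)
The plan is to run, at the level of the frequency pieces $u_j$ solving \eqref{S3eq1}, the same two-step scheme (energy estimate followed by a Stokes estimate, then reassembly) used for Proposition \ref{S3prop1}, but now one material derivative higher and with the weight $t$ in place of $\sqrt t$. Set $w_j\eqdefa D_tu_j=\p_tu_j+u\cdot\na u_j$. Rewriting the momentum equation of \eqref{S3eq1} as $\r D_tu_j-\D u_j+\na\pi_j=0$ and applying $D_t$ to it, using the transport equation $D_t\r=0$ to pull $\r$ through the outer material derivative, I would obtain a Stokes-type system for $(w_j,D_t\pi_j)$,
\beno
\r D_tw_j-\D w_j+\na(D_t\pi_j)=F_j,\qquad \dive w_j=g_j,
\eeno
whose forcing comes from the commutators $[D_t,\D]u_j$ and $[D_t,\na]\pi_j$, namely $F_j=-2\,\na u:\na^2u_j-\D u\cdot\na u_j+\na u\cdot\na\pi_j$, and where, crucially, $w_j$ is \emph{not} divergence free: $g_j=\dive w_j=\p_ku^\ell\,\p_\ell u_j^k$, since $\dive u_j=0$ is only preserved up to this quadratic defect.

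Next I would take the $L^2$ inner product of the first equation with $w_j$. Using $D_t\r=0$ and $\dive u=0$, the material-transport structure yields the identity
\beno
\f12\f{d}{dt}\int_{\R^3}\r|w_j|^2\,dx+\|\na w_j\|_{L^2}^2=\int_{\R^3}F_j\cdot w_j\,dx+\int_{\R^3}D_t\pi_j\,g_j\,dx,
\eeno
the last term appearing precisely because $\dive w_j=g_j\neq0$. I would then multiply by $t^2$ and integrate in time. The only genuinely new lower-order contribution, $\int_0^t\tau\|\sqrt\r w_j\|_{L^2}^2\,d\tau$, is controlled by the frequency-localized $\sqrt t$-weighted bounds already established in the proofs of Propositions \ref{S3prop1} and \ref{S3prop2}. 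The weight $t^2$ then distributes as $\sqrt t$ onto each of the two factors in every quadratic term of $F_j$ and $g_j$, so that after Cauchy--Schwarz and Young the forcing is bounded by products of the $\sqrt t$-weighted norms controlled in Proposition \ref{S3prop2}, while $\|t\na w_j\|_{L^2}^2$ is absorbed on the left.

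To recover the higher-order quantities $\na^2w_j$, $\na(D_t\pi_j)$ and the $D_t\pi_j$ needed in the pressure term above, I would read the system as the stationary problem $-\D w_j+\na(D_t\pi_j)=F_j-\r D_tw_j$ with divergence datum $g_j$, and apply elliptic (Stokes) regularity exactly as in \eqref{S3eq3}--\eqref{S3eq5}; the convective contributions are absorbed into the left-hand side through the smallness $\|u\|_{L^\infty_t(\dH^{\f12})}\le c_1$ guaranteed by \eqref{S3eq4} and Proposition \ref{S3prop1}. This gives $\|\na^2w_j\|_{L^2}+\|\na(D_t\pi_j)\|_{L^2}\lesssim\|\sqrt\r D_tw_j\|_{L^2}+\|F_j\|_{L^2}+\|g_j\|_{\dH^1}$, and a companion bound for $D_t\pi_j$. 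The quadratic $F_j$ and $g_j$ are then estimated by H\"older and Sobolev embedding, always arranging one factor to carry the small norm coming from Proposition \ref{S3prop1} and the other to be one of the higher-derivative quantities bounded in Propositions \ref{S3prop1}--\ref{S3prop2}, with the $\sqrt t$-bookkeeping producing the correct $d_j2^{\pm j/2}$ frequency structure.

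Finally, combining the low-frequency (energy) and high-frequency ($\na^2$, pressure) bounds on the pieces $w_j$ and summing over $u=\sum_{j}u_j$ with Bernstein's inequality, i.e.\ the reassembly from the proof of Lemma \ref{S2lem1} already carried out in Proposition \ref{S3prop1}, I would recover $\|tD_tu\|_{\wt L^\infty_t(B^{\f12})}+\|t\na D_tu\|_{\wt L^2_t(B^{\f12})}\lesssim\|u_0\|_{B^{\f12}}$, which is \eqref{S3eq33}. I expect the main obstacle to be the two new features created by the material derivative: the top-order quadratic forcing $\na u:\na^2u_j$ and $\na u\cdot\na\pi_j$, and the loss of the divergence-free constraint, which forces the Stokes estimate to be run with the datum $g_j$ and makes the pressure term $\int_{\R^3}D_t\pi_j\,g_j\,dx$ unavoidable. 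Closing the estimate rests entirely on distributing derivatives so that one factor always sits in the small norm $\|u\|_{\dH^{\f12}}\le c_1$, allowing the dangerous terms to be absorbed by the viscous dissipation $\|\na w_j\|_{L^2}^2$ on the left.
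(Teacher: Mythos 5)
Your setup is identical to the paper's: applying $D_t$ to the momentum equation of \eqref{S3eq1} produces exactly the paper's system \eqref{S3eq19}, with the same commutator forcing $f_j$ and the same divergence defect $\dive D_tu_j={\rm Tr}(\na u\,\na u_j)$. The genuine gap is in your energy step: you test the $w_j$-equation with $w_j$ itself. After the $t^2$ weight, that estimate only yields $\|tD_tu_j\|_{L^\infty_t(L^2)}+\|t\na D_tu_j\|_{L^2_t(L^2)}\lesssim d_j2^{-\f{j}2}\|u_0\|_{B^{\f12}}$, which is precisely the paper's Corollary \ref{S3col2} (estimate \eqref{S3eq32}, obtained there even more directly from the $\p_tu_j$ bounds and product laws) --- i.e.\ only the \emph{low-frequency} half of the Bernstein reassembly. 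The reassembly scheme of Proposition \ref{S3prop1} also requires the high-frequency half, $\|t\na D_tu_j\|_{L^\infty_t(L^2)}+\|t\na^2D_tu_j\|_{L^2_t(L^2)}\lesssim d_j2^{\f{j}2}\|u_0\|_{B^{\f12}}$, and your scheme never produces it: an $L^\infty_t$ bound on $\na w_j$ requires differentiating $\|\na w_j\|_{L^2}^2$ in time, i.e.\ testing with $D_tw_j=D_t^2u_j$, and your stationary Stokes bound $\|\na^2w_j\|_{L^2}+\|\na D_t\pi_j\|_{L^2}\lesssim\|\sqrt{\r}D_tw_j\|_{L^2}+\|F_j\|_{L^2}+\|g_j\|_{\dH^1}$ cannot be integrated in time because nothing in your argument controls $\|tD_t^2u_j\|_{L^2_t(L^2)}$. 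This second-level energy estimate --- pairing \eqref{S3eq19} with $D_t^2u_j$ and weighting by $t^2$, the paper's \eqref{S3eq21}, whose Gronwall closure gives \eqref{S3eq30} and then \eqref{S3eq40} --- is the heart of the paper's proof and is absent from your plan; without it the Stokes step and the high-frequency reassembly both dangle.

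Two further points that your ``distribute $\sqrt t$ onto each factor'' gloss hides. First, to place $tf_j$ in $L^2_t(L^2)$ the paper must first establish the auxiliary bounds $\|\sqrt{t}\na^2u\|_{L^2_t(L^3)}\lesssim\|u_0\|_{B^{\f12}}$ (\eqref{S3eq16}) and $\|\sqrt{t}(\na^2u_j,\na\pi_j)\|_{L^2_t(L^6)}\lesssim d_j2^{\f{j}2}\|u_0\|_{B^{\f12}}$ (\eqref{S3eq22}): the term $\na u\cdot\na\pi_j$ is \emph{not} absorbable via the small norm $\|u\|_{\dH^{\f12}}\le c_1$, it must be split as $L^3\times L^6$ using these $\sqrt t$-weighted quantities, so the claim that one can ``always arrange one factor to carry the small norm'' fails for exactly the top-order terms you flagged as the main obstacle. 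Second, your first-order pressure term $\int D_t\pi_j\,g_j\,dx$ is itself delicate, since only $\na D_t\pi_j$ is estimated (one must pair $D_t\pi_j\in L^6$ against $g_j\in L^{6/5}$); the paper avoids this entirely and instead handles the second-order pairing $t^2\int\na D_t\pi_j\,|\,D_t^2u_j\,dx$ by integrating by parts against $\p_t{\rm Tr}(\na u\,\na u_j)$, with the resulting $\|t\sqrt{\r}D_t^2u_j\|_{L^2_t(L^2)}$ contributions absorbed by Young's inequality through \eqref{S3eq26}. To repair your proposal, keep your step (a) as the low-frequency input, but add the $D_t^2u_j$-multiplier estimate with the three term-by-term treatments of the paper before invoking Stokes regularity and reassembly.
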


The complete proof of the above two propositions will be presented in Section \ref{Sect3}.

Now we are in a position to complete the proof of Theorem \ref{thm1}.

\begin{proof}[Proof  of Theorem \ref{thm1}]
By mollifying the initial data $(\rho_0,u_0),$ we deduce from
the classical theory of inhomogeneous incompressible Navier-Stokes system that (\ref{S1eq1}) has a unique local  solution
$(\rho^\e, u^\e)$ on $[0,T^\ast_\e[.$ Moreover, we can show that \eqref{S3eq7},  \eqref{S3eq15} and \eqref{S3eq33}  hold for 
 $(\r^\e, u^\e).$ In particular,  Proposition \ref{S3prop1} ensures $T^\ast_\e=\infty$ provided that
$\e_0$ is small enough in \eqref{S1eq3}.
Then exactly along the same line to proof of Theorem 1.2 in \cite{PZZ1}, we can complete the existence part of Theorem
\ref{thm1} by using the uniform estimates \eqref{S3eq7},  \eqref{S3eq15}, \eqref{S3eq33} and \eqref{S3eq38}
for $(\rho^\e, u^\e)$ and a standard compactness argument, which we omit details here.  In order to
prove \eqref{S1eq7}, it remains to show that
\beq\label{S3eq37}
\|tu_t\|_{\wt{L}^\infty_t(B^{\f12})}\leq C\|u_0\|_{B^{\f12}}.
\eeq
Indeed it follows from the law of product in Besov spaces and \eqref{S3eq15}, \eqref{S3eq33} that
\beq\label{S2eq1}
\begin{split}
\|tu_t\|_{\wt{L}^\infty_t(B^{\f12})}\leq &\|t D_tu\|_{\wt{L}^\infty_t(B^{\f12})}+C\|\sqrt{t}u\|_{\wt{L}^\infty_t(B^{\f32})}\|\|\sqrt{t}\na u\|_{\wt{L}^\infty_t(B^{\f12})}\\
\leq &C\|u_0\|_{B^{\f12}}.
\end{split}
\eeq
This completes the proof of Theorem \ref{thm1}.
\end{proof}

In order to prove Theorem \ref{thm2},
we denote $u=v+e^{t\D}u_0.$ Then by virtue of \eqref{S4eq1}, $(a,v, \na\pi)$ verifies
\beq \label{S4eq2}
\left\{
\begin{array}{ll}
\p_ta+\bigl(v+e^{t\D}u_0\bigr)\cdot\na a=0,\\
\p_tv+\bigl(v+e^{t\D}u_0\bigr)\cdot\na v+v\cdot\na e^{t\D} u_0-(1+a)\bigl(\Delta v-\na \pi)\\
\qquad=-e^{t\D}u_0\cdot\na e^{t\D}u_0+a\D e^{t\D}u_0,\\
\dive v=0,\\
v|_{t=0}=0.
\end{array}
\right.
\eeq
Now let $(a,v,\na\pi)$ be a smooth enough solution of \eqref{S4eq2} on $[0,T^\ast[,$ we construct $(v_j, \na\pi_j)$ via
\beq\label{S4eq3}
\left\{
\begin{array}{ll}
\r\p_tv_j+\r\bigl(v+e^{t\D}u_0\bigr)\cdot\na v_j+\r v_j\cdot\na e^{t\D} u_0-\Delta v_j+\na \pi_j\\
\quad\ =\r F_j+\r a\D e^{t\D}\D_j u_0 \with\\
F_j=\begin{pmatrix} -e^{t\D}u_0\cdot\na e^{t\D}\D_ju_0^\h\\
-e^{t\D}\D_j u_0^\h\cdot\na_\h e^{t\D}u_0^3+e^{t\D}u_0^3e^{t\D}\D_j\dive_\h u_0^\h   \end{pmatrix},\\
\dive v_j=0\\
v_j|_{t=0}=0.
\end{array}
\right.
\eeq
Here and in  Section \ref{Sect3}, we always denote $v^\h\eqdefa (v_1,v_2), v=(v^\h,v^3)$ and $\r\eqdefa\f1{1+a}.$

Due to uniqueness result for smooth enough solution of \eqref{S4eq2} and $\dive u_0=0,$ we have
\beq \label{S4eq3a}
v=\sum_{j\in\Z} v_j \andf \na\pi=\sum_{j\in\Z}\na\pi_j.
\eeq

Then the proof of Theorem \ref{thm2} consists of the following three propositions,  the proof of which will
be presented in Section \ref{Sect4}:

\begin{prop}\label{S4prop1}
{\sl Let $(\r,v)$ be a smooth enough solution of \eqref{S4eq2} on $[0,T^\ast[.$ Then under the assumption of \eqref{small1}, we have
\beq\label{S4eq10}
\|v\|_{\wt{L}^\infty(B^{\f12})}+\|\na v\|_{\wt{L}^2_t(B^{\f12})}\leq  C \eta\quad
\mbox{for}\quad t<T^\ast,
\eeq where $\eta$ is given by \eqref{small1}. }
\end{prop}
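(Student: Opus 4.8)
The plan is to transplant the scheme of Proposition \ref{S3prop1} to the localized system \eqref{S4eq3}: I will first establish a weighted $L^2$ energy estimate for each $v_j$, then upgrade it to an estimate at the level of one derivative by testing the momentum equation against $\p_tv_j$ and invoking a Stokes-type elliptic bound, and finally reconstruct $\|v\|_{\wt L^\infty_t(B^{\f12})}+\|\na v\|_{\wt L^2_t(B^{\f12})}$ from \eqref{S4eq3a} through Bernstein's inequality and Lemma \ref{S2lem1}, exactly as in the chain \eqref{S3eq1a}--\eqref{S3eq7}. The one ingredient absent from Proposition \ref{S3prop1} is that both the drift $e^{t\D}u_0$ and the source $F_j$ carry the possibly large vertical component $u_0^3$, so every product must be split into horizontal and vertical pieces, and the large contribution will survive only as a time-integrable coefficient in a Gronwall inequality --- which is exactly what produces the factor $\exp\bigl(C\|u_0^3\|_{B^{\f12}}^2\bigr)$ built into $\eta$ in \eqref{small1}.

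First I would note that $\r\eqdefa 1/(1+a)$ solves the same transport equation $\p_t\r+(v+e^{t\D}u_0)\cdot\na\r=0$ as $a$, and that $\dive(v+e^{t\D}u_0)=0$ since $\dive v=0$ and $\dive e^{t\D}u_0=e^{t\D}\dive u_0=0$. Taking the $L^2$ inner product of the momentum equation of \eqref{S4eq3} with $v_j$ then makes the time-derivative and transport terms coalesce into $\f12\f{d}{dt}\|\sqrt{\r}\,v_j\|_{L^2}^2$, while $\dive v_j=0$ eliminates the pressure, leaving
\begin{equation*}
\f12\f{d}{dt}\bigl\|\sqrt{\r}\,v_j\bigr\|_{L^2}^2+\|\na v_j\|_{L^2}^2=-\int_{\R^3}\r\,v_j\cdot\na e^{t\D}u_0\cdot v_j\,dx+\int_{\R^3}\r\,F_j\cdot v_j\,dx+\int_{\R^3}\r\,a\,\D e^{t\D}\D_j u_0\cdot v_j\,dx .
\end{equation*}
The lower bound $\r\geq(1+\|a_0\|_{L^\infty})^{-1}$ makes $\|\sqrt{\r}\,v_j\|_{L^2}$ equivalent to $\|v_j\|_{L^2}$.

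The heart of the matter is the stretching term. Writing $v_j\cdot\na e^{t\D}u_0\cdot v_j=v_j^k\,\p_k(e^{t\D}u_0^l)\,v_j^l$, I would split off the two harmless families: those with $l\in\{1,2\}$, which only differentiate the small horizontal datum $u_0^\h$, and the diagonal vertical piece with $l=k=3$, which by incompressibility equals $-v_j^3\,\dive_\h e^{t\D}u_0^\h\,v_j^3$ and again involves only $u_0^\h$. The single dangerous contribution is $\int\r\,v_j^\h\cdot\na_\h e^{t\D}u_0^3\,v_j^3\,dx$, in which the large component $u_0^3$ is differentiated horizontally; here I would not try to absorb but rather estimate, using H\"older with $(L^3,L^2,L^6)$, the embedding $\dH^1\hookrightarrow L^6$ and Young's inequality,
\begin{equation*}
\Bigl|\int_{\R^3}\r\,v_j^\h\cdot\na_\h e^{t\D}u_0^3\,v_j^3\,dx\Bigr|\leq \f14\|\na v_j\|_{L^2}^2+C\,\bigl\|\na_\h e^{t\D}u_0^3\bigr\|_{L^3}^2\,\|v_j\|_{L^2}^2 ,
\end{equation*}
keeping the first term on the left and the coefficient $\|\na_\h e^{t\D}u_0^3\|_{L^3}^2$ as the Gronwall weight. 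A direct Littlewood--Paley computation with the heat semigroup gives $\int_0^\infty\|\na_\h e^{t\D}u_0^3\|_{L^3}^2\,dt\lesssim\|u_0^3\|_{B^{\f12}}^2$, which is precisely the source of the squared exponential. The forcing terms $\r F_j$ and $\r\,a\,\D e^{t\D}\D_j u_0$ are then treated by the product laws in Besov spaces together with bilinear heat-flow smoothing estimates: the $a\,\D e^{t\D}\D_j u_0$ term uses the smallness of $\|a_0\|_{L^\infty}$ and $\int_0^\infty\|\D e^{t\D}\D_j u_0\|_{L^2}\,dt\lesssim d_j2^{-\f{j}2}\|u_0\|_{B^{\f12}}$, while the three pieces of $F_j$ are arranged exactly so that $u_0^3$ enters only through $e^{t\D}u_0^3$ paired against a $\D_j$-localized horizontal factor, yielding a bound $d_j2^{-\f{j}2}$ times a product in which one factor is the small $\|u_0^\h\|_{B^{\f12}}$.

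Feeding these estimates into the energy identity and applying Gronwall's lemma, with the weight absorbed as $\exp\bigl(C\int_0^t\|\na_\h e^{t'\D}u_0^3\|_{L^3}^2\,dt'\bigr)\lesssim\exp\bigl(C\|u_0^3\|_{B^{\f12}}^2\bigr)$, I expect
\begin{equation*}
\|v_j\|_{L^\infty_t(L^2)}+\|\na v_j\|_{L^2_t(L^2)}\lesssim d_j2^{-\f{j}2}\,\eta .
\end{equation*}
Testing the momentum equation against $\p_tv_j$ and using the elliptic estimate $\|\na^2v_j\|_{L^2}+\|\na\pi_j\|_{L^2}\lesssim\|\sqrt{\r}\,\p_tv_j\|_{L^2}$, valid once $\|v\|_{L^\infty_t(\dH^{\f12})}$ is small, then furnishes the companion bound $\|\na v_j\|_{L^\infty_t(L^2)}+\|\na^2v_j\|_{L^2_t(L^2)}\lesssim d_j2^{\f{j}2}\eta$, and summing the two over $j\in\Z$ with Bernstein's inequality and Lemma \ref{S2lem1}, exactly as after \eqref{S3eq6}, gives \eqref{S4eq10}; a continuity argument in the spirit of the one closing Proposition \ref{S3prop1} propagates the required smallness on $[0,T^\ast[$. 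The main obstacle is precisely the anisotropic bookkeeping of the stretching and source terms: organizing every product so that no derivative ever falls vertically on $u_0^3$, and verifying that the resulting Gronwall weight is time-integrable with the correct quadratic dependence $\|u_0^3\|_{B^{\f12}}^2$; once this is in place the remainder parallels Proposition \ref{S3prop1} line by line.
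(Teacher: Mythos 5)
Your overall architecture coincides with the paper's: the paper proves Proposition \ref{S4prop1} through Lemmas \ref{S4lem1} and \ref{S4lem2} --- a Gronwall-type $L^2$ energy estimate for each $v_j$, then an estimate at one more derivative obtained by testing the momentum equation of \eqref{S4eq3} against $\p_tv_j$ together with a Stokes bound, followed by exactly the Bernstein-summation argument of \eqref{S3eq7} and a continuity argument on $T^\star_2$ defined via $\|v\|_{L^2_t(L^\infty)}\leq 1$. You also correctly identify the two structural points: $v_j|_{t=0}=0$, so all smallness must come from the forcing, and $F_j$ is built (using $\dive u_0=0$) so that the $\D_j$-localized factor is always $u_0^\h$, with the large component $u_0^3$ entering only through time-integrable coefficients that feed the exponential in \eqref{small1}.

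Two corrections, one of substance. First, your anisotropic splitting of the stretching term $\int_{\R^3}\r\, v_j\cdot\na e^{t\D}u_0 \,|\, v_j\,dx$ is unnecessary: since this term is quadratic in $v_j$, the \emph{entire} coefficient $\|\na e^{t\D}u_0\|_{B^{\f12}}\lesssim \|e^{t\D}u_0\|_{B^{\f32}}$ --- vertical derivatives of $u_0^3$ included --- can be placed in the Gronwall weight, whose time integral is controlled by $\|u_0\|_{B^{\f12}}^2$ (Lemma \ref{S4lem4}); the resulting factor $\exp\bigl(C\|u_0\|_{B^{\f12}}^2\bigr)$ is then absorbed into $\eta$ because $\|u_0^\h\|_{B^{\f12}}\leq\e_0$. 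This is what the paper does; your version is also correct, but the anisotropic bookkeeping is only needed where smallness must enter \emph{multiplicatively}, namely in $F_j$ itself. Second, and more seriously: the elliptic estimate $\|\na^2v_j\|_{L^2}+\|\na\pi_j\|_{L^2}\lesssim\|\sqrt{\r}\,\p_tv_j\|_{L^2}$ that you invoke is false for the system \eqref{S4eq3}. Unlike \eqref{S3eq2b}, here the Stokes right-hand side necessarily carries, besides $\r\p_tv_j$, the drift $\r u\cdot\na v_j$ with $u=v+e^{t\D}u_0$, the stretching $\r v_j\cdot\na e^{t\D}u_0$, and the forcings $\r F_j$ and $\r a\D e^{t\D}\D_ju_0$. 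The absorption mechanism of \eqref{S3eq4}--\eqref{S3eq5} cannot remove the drift term, because $\|u\|_{\dH^{\f12}}$ is \emph{not} small in this theorem ($u_0^3$ is allowed to be large), and smallness of $\|v\|_{L^\infty_t(\dH^{\f12})}$ handles only the $v$-part of the drift; the forcing terms are likewise not controlled by $\|\sqrt{\r}\,\p_tv_j\|_{L^2}$ at all. The paper's substitute is \eqref{S4eq9a}, which keeps all of these contributions on the right-hand side (each has the admissible size $d_j2^{\f{j}2}$ after Lemma \ref{S4lem0}), and it runs Gronwall in the $\p_tv_j$-estimate with the time-integrable weight $\|v\|_{L^\infty}^2+\|e^{t\D}u_0\|_{B^{\f32}}^2$ on $[0,T^\star_2]$, closing the bootstrap through $\|v\|_{L^2_t(L^\infty)}\leq C\|v\|_{\wt{L}^2_t(B^{\f32})}\leq C\e_0$. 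With that repair, your argument closes exactly as in the paper.
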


 \begin{prop}\label{S4prop2}
 {\sl Under the assumption of Proposition \ref{S4prop1}, we have
 \beq\label{S4eq18}
 \|\sqrt{t}\na v\|_{\wt{L}^\infty(B^{\f12})}+\|\sqrt{t} v_t\|_{\wt{L}^2_t(B^{\f12})}
 \leq C\eta\quad
 \mbox{for any }\quad t>0.
 \eeq }
 \end{prop}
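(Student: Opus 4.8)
The plan is to prove Proposition~\ref{S4prop2} by transposing, to the forced linear system \eqref{S4eq3}, the time-weighted energy argument behind Proposition~\ref{S3prop2} (itself the weighted counterpart of the already displayed proof of Proposition~\ref{S3prop1}). As there, I would never estimate $v$ directly but work mode by mode on $v_j$, reassembling only at the very end through Bernstein's inequality and the summation device of Lemma~\ref{S2lem1}, using the decomposition \eqref{S4eq3a}. The key structural remark is that the two quantities in \eqref{S4eq18}, namely $\|\sqrt t\na v\|_{\wt{L}^\infty_t(B^{\f12})}$ and $\|\sqrt t v_t\|_{\wt{L}^2_t(B^{\f12})}$, are exactly the two outputs of one weighted energy identity for each $v_j$, and so should be controlled together as a single package, in parallel with the package $\|\D_ju\|_{L^\infty_t(L^2)}+\|\na\D_ju\|_{L^2_t(L^2)}$ of Proposition~\ref{S3prop1}.

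First I would test the momentum equation of \eqref{S4eq3} against $t\,\p_tv_j$ and use the identity $\f t2\f{d}{dt}\|\na v_j\|_{L^2}^2=\f12\f{d}{dt}\bigl(t\|\na v_j\|_{L^2}^2\bigr)-\f12\|\na v_j\|_{L^2}^2$. Integrating over $[0,t]$ produces $\f t2\|\na v_j(t)\|_{L^2}^2+\int_0^tt'\|\sqrt\r\,\p_tv_j\|_{L^2}^2\,dt'$ on the left, together with the remainder $\f12\int_0^t\|\na v_j\|_{L^2}^2\,dt'$, which is harmless because it is absorbed by the un-weighted per-mode bound already available from the proof of Proposition~\ref{S4prop1}. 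The transport and stretching terms $\r(v+e^{t\D}u_0)\cdot\na v_j$, $\r v_j\cdot\na e^{t\D}u_0$ and the forcing $\r F_j+\r a\D e^{t\D}\D_ju_0$ would each be paired with $t\,\p_tv_j$, the weight being split as $\sqrt{t'}\,\sqrt\r\,\p_tv_j$ against $\sqrt{t'}(\cdots)$, so that the factor $\|\sqrt t\,\sqrt\r\,\p_tv_j\|_{L^2_t(L^2)}$ can be absorbed into the left-hand side once $\|a\|_{L^\infty}$ is small; to convert $\|\sqrt\r\,\p_tv_j\|_{L^2}$ into control of $\na^2v_j$ and $\na\pi_j$ I would reuse the Stokes reformulation exactly as in \eqref{S3eq2b}--\eqref{S3eq5}.

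This basic identity yields the package $\|\sqrt t\na v_j\|_{L^\infty_t(L^2)}+\|\sqrt t\,\sqrt\r\,\p_tv_j\|_{L^2_t(L^2)}$, whose members are the $L^2$-level analogues of the two norms in \eqref{S4eq18}, yet it is not quite enough to reassemble the supremum-in-time norm $\|\sqrt t\na v\|_{\wt{L}^\infty_t(B^{\f12})}$: in the Bernstein splitting $\na\D_jv=\sum_{j'\ge j}\D_j\na v_{j'}+\sum_{j'<j}\D_j\na v_{j'}$ the low-frequency block acquires the decisive factor $2^{-j}$ only after trading one derivative, hence must be estimated through $\|\sqrt t\na^2v_{j'}\|_{L^\infty_t(L^2)}$, an $L^\infty_t$ and not merely $L^2_t$ bound. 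Since $a=\f1\r-1$ is assumed only bounded, I cannot differentiate \eqref{S4eq3} in space to reach this, for that would create the uncontrolled term $\na a$; instead I would obtain the required higher-order $L^\infty_t$ package through a material-derivative energy estimate, exploiting $D_ta=0$ together with the Stokes reformulation \eqref{S3eq2b} (which involves $\r$ but no derivative of $a$) to recover $\na^2v_j$ and $\na\pi_j$. Feeding the low- and high-order packages into the splitting (high frequencies $j'\ge j$ through the former, low frequencies $j'<j$ through the latter with its $2^{-j}$ gain) and summing against $\sum_jd_j=1$ then delivers \eqref{S4eq18}.

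The main obstacle is the forcing term $\r a\D e^{t\D}\D_ju_0$, which is the most singular near $t=0$ because $\D e^{t\D}$ costs two derivatives. Frequency localization rescues it: on the spectral support of $\D_ju_0$ one has $\sqrt{t'}\,\|\D e^{t'\D}\D_ju_0\|_{L^2}\lesssim\sqrt{t'}\,2^{2j}e^{-ct'2^{2j}}\|\D_ju_0\|_{L^2}=2^j\bigl(\sqrt{t'}\,2^je^{-ct'2^{2j}}\bigr)\|\D_ju_0\|_{L^2}\lesssim d_j2^{\f j2}\|u_0\|_{B^{\f12}}$, so the $\sqrt t$ weight precisely compensates the lost derivatives and keeps the contribution at the correct $\ell^1(d_j)$ level, the smallness of $\|a\|_{L^\infty}$ from \eqref{small1} rendering it a genuine perturbation. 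The second delicate point, inherited from Proposition~\ref{S4prop1}, is that the terms built from $F_j$ and the large vertical component $e^{t\D}u_0^3$ generate, through the Gronwall step, the exponential factor $\exp\bigl(C\|u_0^3\|_{B^{\f12}}^2\bigr)$ hidden in $\eta$; I would need to verify that the weighted and higher-order estimates produce no growth in $\|u_0^3\|_{B^{\f12}}$ beyond what \eqref{small1} already absorbs.
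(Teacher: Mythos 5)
Your overall architecture (per-mode weighted energy identity, Stokes reformulation to recover $\na^2v_j$ and $\na\pi_j$, Bernstein splitting with the low-frequency $2^{-j}$ gain, heat-kernel compensation of the singular forcing $\r a\D e^{t\D}\D_ju_0$ via Lemma~\ref{S4lem0}) matches the paper, and your first step reproduces exactly the paper's Corollary~\ref{S4col1}. But there is a genuine gap at the decisive point: the higher-order $L^\infty_t$ package. You propose to obtain $\|\sqrt{t}\na^2 v_j\|_{L^\infty_t(L^2)}$ by a \emph{material-derivative} energy estimate, exploiting $D_ta=0$. At the $\sqrt{t}$-weight level this does not close. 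Applying $D_t$ to \eqref{S4eq3} produces the commutators $[\D;D_t]v_j=-\D u\cdot\na v_j-2\sum_{i=1}^3\p_iu\cdot\na\p_iv_j$ and the pressure term $\na u\cdot\na\pi_j$ (cf.\ \eqref{S3eq19}, \eqref{S4eq22}), and each of these requires a \emph{full} unit of weight $t$, one $\sqrt{t}$ per factor: e.g.\ the paper bounds $\|t\na u\cdot\na\pi_j\|_{L^2_t(L^2)}\leq\|\sqrt{t}\na u\|_{L^\infty_t(L^3)}\|\sqrt{t}\na\pi_j\|_{L^2_t(L^6)}$, and similarly $\|t\D u\cdot\na u_j\|_{L^2_t(L^2)}$ via $\|\sqrt{t}\na^2u\|_{L^2_t(L^3)}$ and $\|\sqrt{t}\na^2u_j\|_{L^2_t(L^6)}$ (see \eqref{S3eq23}, \eqref{S3eq16}, \eqref{S3eq22}). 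With only $\sqrt{t}$ at your disposal, one factor must go unweighted, and the needed norms ($\|\na^2v_j\|_{L^\infty_t(L^2)}$, $\|\na u\|_{L^\infty_t(L^3)}$, or $\|\sqrt{t}u\|_{L^\infty_t(B^{\f52})}$) are either false near $t=0$ or are precisely consequences of the estimate you are trying to prove: \eqref{S3eq16} and \eqref{S3eq22} are themselves derived from the outputs of the $\sqrt{t}$-level proposition, which is why the paper performs the $D_t$ energy estimate only at weight $t$, i.e.\ for Propositions~\ref{S3prop3} and \ref{S4prop3}, \emph{after} Proposition~\ref{S4prop2} is available. Your argument is therefore circular at this step.

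The repair is the one the paper uses in Lemma~\ref{S4lem3} (and in the proof of Proposition~\ref{S3prop2}, which you said you were transposing): differentiate the $v_j$ equation in \emph{time} with $\p_t$, not $D_t$. Since $\p_t$ commutes with $\D$ and $\na$, no commutators of the above type appear; the price is the terms carrying $\r_t$, but $\r_t=-u\cdot\na\r$ together with $\dive u=0$ lets one integrate by parts so that no derivative ever falls on $a$ — your stated reason for abandoning $\p_t$-type differentiation (fear of $\na a$) applies to spatial derivatives only, not to $\p_t$. The terms where $\p_t$ hits the heat-flow data, $\p_te^{t\D}u_0=\D e^{t\D}u_0$ inside $F_j$ and the forcing, are exactly what Lemma~\ref{S4lem4} with weights $t^s$ is designed for. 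One then multiplies the resulting inequality \eqref{S4eq16} by $t$, runs Gronwall with the intermediate bound $\|t^{\f14}u_t\|_{L^2_t(L^2)}\leq C(\eta+\|u_0\|_{B^{\f12}})$ (your interpolation idea is the right one here, cf.\ \eqref{S4eq19a}), upgrades $\|\sqrt{t}\p_tv_j\|_{L^\infty_t(L^2)}$ to $\|\sqrt{t}\na^2v_j\|_{L^\infty_t(L^2)}$ through the Stokes estimate \eqref{S4eq9a}, and closes with a continuity argument on $T_3^\star\eqdefa\{t>0:\ \|t^{\f12}v\|_{L^\infty_t(B^{\f32})}\leq1\}$ — a bootstrap your proposal also leaves implicit but which is needed to control the quadratic terms in $v$.
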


 \begin{prop}\label{S4prop3}
 {\sl Under the assumptions of  Proposition \ref{S4prop1},  for any $t>0,$  we have
 \beq
 \label{S4eq21}
 \|tD_tv\|_{\wt{L}^\infty_t(B^{\f12})}+\|t\na D_tv\|_{\wt{L}^2_t(B^{\f12})}
 \leq C\eta. \eeq }
 \end{prop}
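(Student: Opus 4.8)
The plan is to follow the frequency-localized energy scheme already used for Propositions \ref{S4prop1} and \ref{S4prop2}, now applied to the material derivative $w_j\eqdef D_tv_j=\p_tv_j+u\cdot\na v_j$ of the pieces $v_j$ solving \eqref{S4eq3}, where $u=v+e^{t\D}u_0$ and $\r=\f1{1+a}$. First I would recast the momentum equation of \eqref{S4eq3} in Stokes form
\beq\label{plan-a}
\r D_tv_j-\D v_j+\na\pi_j=G_j,\qquad G_j\eqdef-\r v_j\cdot\na e^{t\D}u_0+\r F_j+\r a\D e^{t\D}\D_ju_0,
\eeq
and record the two structural facts $D_t\r=0$ (from the transport equation for $a$ in \eqref{S4eq2}) and $\dive u=0$. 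Applying $D_t$ to \eqref{plan-a} and commuting it through $\D$ and $\na$ produces the evolution equation for $w_j$,
\beq\label{plan-b}
\r D_tw_j-\D w_j+\na(D_t\pi_j)=[D_t;\D]v_j-[D_t;\na]\pi_j+D_tG_j,
\eeq
with $[D_t;\D]v_j=-2\na u:\na^2v_j-(\D u)\cdot\na v_j$ and $[D_t;\na]\pi_j$ having $i$-th component $-\p_iu^l\p_l\pi_j$.

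Next I would run a weighted $L^2$ estimate on \eqref{plan-b}: take the $L^2(\R^3)$ inner product with $t^2w_j$, integrate on $[0,t]$, and use $D_t\r=0$ together with $\dive u=0$ to convert the first term into $\f12\f{d}{dt}\bigl(t^2\|\sqrt{\r}\,w_j\|_{L^2}^2\bigr)$ modulo the lower-order remainder $t\|\sqrt{\r}\,w_j\|_{L^2}^2$, which is integrable in time by the $\sqrt{t}$-level bounds of Proposition \ref{S4prop2}; the weight $t$ annihilates the initial layer, so no control of $w_j(0)$ is needed. The one term breaking the parabolic structure is the pressure, because $w_j$ is not divergence free: one has $\dive w_j=\p_ku^l\p_lv_j^k$, a quadratic and hence small quantity, so that $\int_{\R^3}\na(D_t\pi_j)| t^2w_j\,dx$ does not vanish. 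I would control it by integrating by parts and invoking the stationary Stokes estimate for $(w_j,\na D_t\pi_j)$ obtained by reading \eqref{plan-b} as an elliptic system with the nonzero datum $\dive w_j$, exactly as in \eqref{S3eq2b}--\eqref{S3eq5}. Here the advection by the small perturbation $v$ and by $a$ is absorbed into the left-hand side via Proposition \ref{S4prop1}, whereas the advection by $e^{t\D}u_0$, which carries the possibly large vertical component, is kept as forcing and treated anisotropically; this step also yields the companion bound on $\na^2w_j$ and $\na D_t\pi_j$ needed below.

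The bulk of the work is to bound the forcing in \eqref{plan-b}. The commutators cost one derivative of $u$ against the second-order data of $(v_j,\pi_j)$, which I would estimate by the product laws in Besov spaces together with the bounds $\|\sqrt{t}\na v\|_{\wt{L}^\infty_t(B^{\f12})}$ and $\|\sqrt{t}v_t\|_{\wt{L}^2_t(B^{\f12})}$ of Proposition \ref{S4prop2} and the Stokes estimate for $\na^2v_j,\na\pi_j$. The genuinely new contributions are those in $D_tG_j$: since $\p_te^{t\D}u_0=\D e^{t\D}u_0$, the operator $D_t$ turns the heat flow into at most its second spatial derivatives plus $u$ times its first derivatives, which I would absorb using the heat-flow estimates of Lemma \ref{S4lem0} that trade each extra derivative for a negative power of $\sqrt{t}$ matching the $t$-weight on the left. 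Throughout, the anisotropic splitting of $F_j$ into horizontal and vertical parts and the incompressibility relation $\dive_\h u_0^\h=-\p_3u_0^3$ must be respected, so that every appearance of the possibly large $\|u_0^3\|_{B^{\f12}}$ enters only through a horizontal derivative or through the factor $\exp\bigl(C\|u_0^3\|_{B^{\f12}}^2\bigr)$ already built into $\eta$ in \eqref{small1}.

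The step I expect to be the main obstacle is precisely this anisotropic bookkeeping: closing the estimate while never letting the large vertical velocity $e^{t\D}u_0^3$ multiply a vertical top-order derivative, which forces one to organize both the commutators in \eqref{plan-b} and the terms of $D_tG_j$ according to the horizontal/vertical structure rather than isotropically, and to apply Gronwall's inequality in $t$ with $\|u_0^3\|_{B^{\f12}}^2$ kept apart from the small quantities. Once each piece obeys the two-sided frequency bounds $\|tw_j\|_{L^\infty_t(L^2)}+\|t\na w_j\|_{L^2_t(L^2)}\lesssim d_j2^{-\f j2}\eta$ from the energy estimate and $\|t\na w_j\|_{L^\infty_t(L^2)}+\|t\na^2w_j\|_{L^2_t(L^2)}\lesssim d_j2^{\f j2}\eta$ from the Stokes estimate, I would reassemble $tD_tv=\sum_{j\in\Z}tw_j$ by estimating $\D_j(tD_tv)=\sum_{j'\in\Z}\D_j(tw_{j'})$, splitting into $j'\ge j$ and $j'\le j$ and applying Bernstein's inequality exactly as at the end of the proof of Proposition \ref{S3prop1}, thereby obtaining \eqref{S4eq21}.
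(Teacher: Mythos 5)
Your skeleton is the paper's: applying $D_t$ to the localized system \eqref{S4eq3} produces exactly the paper's equation \eqref{S4eq22} (with the same commutators $[D_t;\D]v_j$ and $\na u\cdot\na\pi_j$), and your final Bernstein reassembly with the two-sided bounds $d_j2^{\mp j/2}$ is how the paper concludes. But both places where you actually propose to \emph{produce} those bounds break down. Your low-level estimate --- testing the $w_j=D_tv_j$ equation against $t^2w_j$ --- cannot close on the pressure term that you yourself flag. Since $\dive w_j={\rm Tr}(\na u\,\na v_j)\neq0$, the term $t^2\int_{\R^3}\na D_t\pi_j\, |\, w_j\,dx$ survives, and $\na D_t\pi_j$ is a genuinely high-level quantity: through the elliptic equation it carries $\r D_t^2v_j$, and the paper's \eqref{S3eq26} and \eqref{S4eq30} show $\|t\na D_t\pi_j\|_{L^2_t(L^2)}\lesssim d_j2^{\f{j}2}$, while $\|tw_j\|_{L^\infty_t(L^2)}$ is targeted at $d_j2^{-\f{j}2}$. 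The pairing is thus of size $d_j^2 2^{0}$, a full factor $2^{j}$ larger than the $d_j^22^{-j}$ your squared low estimate requires. Integrating by parts onto $\dive w_j$ does not rescue it: that needs the \emph{undifferentiated} pressure $D_t\pi_j$ in $L^2$, which is not among the available bounds, and rewriting ${\rm Tr}(\na u\,\na v_j)=\dive(u\cdot\na v_j)$ and moving the gradient back onto $D_t\pi_j$ reproduces the same mismatched pairing. The paper never performs this energy estimate: the low bound \eqref{S4eq20} is obtained pressure-free from Corollary \ref{S4col2} (the $t^2$-weighted $\p_t$-estimates built on Lemma \ref{S4lem3}) together with the product law applied to $u\cdot\na v_j$, exactly as in \eqref{S3eq32}.

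Second, your high bound $\|t\na w_j\|_{L^\infty_t(L^2)}+\|t\na^2w_j\|_{L^2_t(L^2)}\lesssim d_j2^{\f{j}2}\eta$ is attributed to ``the Stokes estimate,'' but a stationary Stokes estimate only converts control of $\r D_tw_j=\r D_t^2v_j$ and of the forcing into control of $(\na^2w_j,\na D_t\pi_j)$; it yields neither $\|tD_t^2v_j\|_{L^2_t(L^2)}$ nor $\|t\na D_tv_j\|_{L^\infty_t(L^2)}$. Those come only from the $\dH^1$-level energy estimate, i.e.\ testing \eqref{S4eq22} against $t^2D_t^2v_j$, which is the paper's \eqref{S4eq23} and is where the real work of the proof sits: the bound $\|tG_j\|_{L^2_t(L^2)}\leq C\sqrt{\eta}\,d_j2^{\f{j}2}$ of \eqref{S4eq25} (here $G_j$ is the full right-hand side after applying $D_t$, not your pre-$D_t$ forcing), proved via Lemmas \ref{S4lem0} and \ref{S4lem4} and the $L^2_t(L^6)$ Stokes bound \eqref{S4eq24}, and above all the term $t^2\int_{\R^3}\na D_t\pi_j\,|\,D_t^2v_j\,dx$, which Proposition \ref{S3prop3} (imported here) tames by writing $\dive D_tv_j$ as a divergence, integrating by parts twice, and estimating the three resulting terms separately before Gronwall. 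Your proposal omits this estimate entirely, so neither of your two claimed frequency bounds is actually established. A lesser point: the obstacle you single out --- anisotropic horizontal/vertical bookkeeping in the commutators and in $D_tF_j$ --- is not where the difficulty lies at this stage; the anisotropy was spent once and for all in the construction of $F_j$ in \eqref{S4eq3}, and the paper estimates $D_tF_j$ isotropically, every occurrence of the possibly large $u_0^3$ entering through factors such as $\|u_0\|_{B^{\f12}}\|u_0^\h\|_{B^{\f12}}$ that are already absorbed into $\eta$ of \eqref{small1}.
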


\begin{proof}[Proof of Theorem \ref{thm2}] With the {\it a priori} estimates \eqref{S4eq10}, \eqref{S4eq18} and \eqref{S4eq21},
we can follow the same line as the proof of Theorem 1  of \cite{HPZ2} to complete the existence part of Theorem \ref{thm2}. Moreover,
there holds \eqref{S1eq8}.
\end{proof}

\begin{rmk} We emphasize that we crucially used the divergence free condition of $u_0$ in the construction of $(v_j,\na\pi_j)$ in \eqref{S4eq3}.
We comment that the proof of Theorem \ref{thm2} here is more concise than that of Theorem 1  in \cite{HPZ2},
where the authors first write the integral formulation for the velocity field $u$ of \eqref{S4eq1}, then
perform the estimate of $u^\h$ and finally the estimate of $u^3$ through the application of the maximal regularity estimate for heat-semi-group.
 I am not sure if we can go through the proof of  Theorem \ref{thm2}
by performing energy estimate for $u_j^\h$ and then for the energy estimate of $u_j^3$ for solutions $(u_j,\na\pi_j)$ of \eqref{S3eq1}.
\end{rmk}

Finally, let us turn to the proof of Theorem \ref{thm3}, which relies on the following propositions:

\begin{prop}\label{S5prop1}
{\sl Let $(\r, u)$ be a smooth enough solution of \eqref{S1eq1} on $[0, T^\ast[.$ Then for $\ga\in ]0,1/4],$ there a positive constant $c_\ga$
so that \beq \label{S5eq12} T^\ast> T_\ga\eqdefa c_\ga \|u_0\|_{\dot H^{\f12+2\ga}}^{-\f1\ga},\eeq
and for $t\leq T_\ga,$ there holds
 \beq \label{S5eq11}
 \|u\|_{\wt{L}^\infty_t(\dH^{\f12+2\ga})}+\|\na u\|_{{L}^2_t(\dH^{\f12+2\ga})} \leq C  \|u_0\|_{\dH^{\f12+2\ga}}.
\eeq}
\end{prop}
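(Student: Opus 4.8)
The plan is to follow the architecture of Proposition \ref{S3prop1}: decompose $u=\sum_{j\in\Z}u_j$ through the linear system \eqref{S3eq1}, run two energy estimates for each $u_j$, and then reassemble in the $\dH^{\f12+2\ga}$ norm by means of Lemma \ref{S2lem1}. Writing $s\eqdefa\f12+2\ga$, the first piece is the $L^2$ estimate obtained by testing the momentum equation of \eqref{S3eq1} against $u_j$; exactly as in the derivation of \eqref{S3eq2} this carries \emph{no} nonlinear contribution (the convective term cancels against $\p_t\r$ after using $\dive u=0$), so it gives $\|u_j\|_{L^\infty_t(L^2)}+\|\na u_j\|_{L^2_t(L^2)}\le C\|\D_ju_0\|_{L^2}$ with no smallness whatsoever. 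The decisive conceptual difference from Proposition \ref{S3prop1} is that $\|u_0\|_{\dH^{\f12}}$ is \emph{not} small — $\dH^{\f12}$ is scaling invariant under \eqref{S1eq12}, so no rescaling can make it small — hence the pointwise-in-time absorption of the $\dH^{\f12}$-type factor used to pass from \eqref{S3eq3} to \eqref{S3eq5} is unavailable. The whole point is to replace that pointwise smallness by the smallness of a scaling invariant, time-integrated quantity on the short interval $[0,T_\ga]$.

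For the second (higher) estimate, obtained by testing against $\p_tu_j$ as in \eqref{S3eq2a}, I would bound the convective term by $\|u\cdot\na u_j\|_{L^2}\le\|u\|_{L^6}\|\na u_j\|_{L^3}\lesssim\|\na u\|_{L^2}\|\na u_j\|_{L^2}^{\f12}\|\na^2u_j\|_{L^2}^{\f12}$ and feed this into the Stokes estimate for \eqref{S3eq2b}; one Young inequality then absorbs $\f12\|\na^2u_j\|_{L^2}$ and yields the clean elliptic bound $\|\na^2u_j\|_{L^2}+\|\na\pi_j\|_{L^2}\le C\|\sqrt\r\p_tu_j\|_{L^2}+C\|u\|_{\dH^1}^2\|\na u_j\|_{L^2}$, which needs no smallness of $\|u\|_{\dH^{\f12}}$. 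Inserting this into \eqref{S3eq2a} and applying Young again leads to the differential inequality $\f{d}{dt}\|\na u_j\|_{L^2}^2+\|\sqrt\r\p_tu_j\|_{L^2}^2\le C\|u\|_{\dH^1}^4\|\na u_j\|_{L^2}^2$. By Gronwall this produces $\|\na u_j\|_{L^\infty_t(L^2)}+\|(\sqrt\r\p_tu_j,\na^2u_j,\na\pi_j)\|_{L^2_t(L^2)}\le C\|\na\D_ju_0\|_{L^2}\exp\bigl(C\|u\|_{L^4_t(\dH^1)}^4\bigr)$, so everything now hinges on controlling the \emph{scaling invariant} norm $\|u\|_{L^4_t(\dH^1)}$ (only $\r$ as a bounded weight, $c_0\le\r\le C_0$, enters, so the density regularity never intervenes).

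The step I expect to be the main obstacle is to show that on $[0,T_\ga]$ this invariant norm is controllably small \emph{purely} in terms of $\|u_0\|_{\dH^s}$, with no dependence on $\|u_0\|_{L^2}$ — this is what forces the sharp lifespan. Running the argument as a continuity/bootstrap on $X(t)\eqdefa\|u\|_{\wt L^\infty_t(\dH^s)}+\|\na u\|_{L^2_t(\dH^s)}\le 2C\|u_0\|_{\dH^s}$, I would interpolate $\|u\|_{\dH^1}\le\|u\|_{\dH^s}^{\f12+2\ga}\|u\|_{\dH^{s+1}}^{\f12-2\ga}$ (legitimate precisely because $\ga\le\f14$), and then use Hölder in time to get $\|u\|_{L^4_t(\dH^1)}^4\le\|u\|_{\wt L^\infty_t(\dH^s)}^{2+8\ga}\,t^{4\ga}\,\|\na u\|_{L^2_t(\dH^s)}^{2-8\ga}\le\bigl(2C\|u_0\|_{\dH^s}\bigr)^4 t^{4\ga}$. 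Choosing $T_\ga=c_\ga\|u_0\|_{\dH^s}^{-1/\ga}$ makes $\|u_0\|_{\dH^s}^4\,T_\ga^{4\ga}=c_\ga^{4\ga}$, so for $c_\ga$ small the Gronwall exponential above is $\le2$ and, crucially, the resulting constants are independent of $\|u_0\|_{L^2}$. This is exactly the exponent reproducing \eqref{S5eq12}, and it is dictated by \eqref{S1eq12}: the lifespan must scale like $\|u_0\|_{\dH^s}^{-2/(s-\f12)}=\|u_0\|_{\dH^s}^{-1/\ga}$.

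Finally I would recombine. Using \eqref{S3eq1a}, Bernstein's inequality and the two families of bounds with $\|\D_ju_0\|_{L^2}=c_j2^{-js}\|u_0\|_{\dH^s}$, the same splitting into $j'\ge j$ and $j'<j$ as at the end of the proof of Proposition \ref{S3prop1} produces convolution kernels $2^{(j-j')s}$ and $2^{(j-j')(s-1)}$; both lie in $\ell^1(\Z)$ as long as $0<s<1$, so Lemma \ref{S2lem1} (i.e. Young's convolution inequality in $\ell^2$) gives $X(T_\ga)\le C\|u_0\|_{\dH^s}$ with a constant independent of the bootstrap constant. This strictly improves the bootstrap assumption and, by continuity, forces $T^\ast>T_\ga$ together with \eqref{S5eq11}. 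The one genuinely delicate point is the endpoint $\ga=\f14$, i.e. $s=1$ (the Leray case), where the second kernel degenerates to a non-summable constant sequence; there the recombination of the low-$j'$, high-$j$ interactions must be carried out more carefully through the $\wt L^2_t$ structure of the dissipation, and this is the only place where I anticipate real technical friction.
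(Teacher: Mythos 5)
Your proposal is correct and follows the paper's own architecture almost step for step: the same decomposition of $u$ through the linear systems \eqref{S3eq1}, the same pair of energy estimates (testing against $u_j$, then against $\p_tu_j$ combined with the Stokes bound \eqref{S3eq2b}), the same continuity bootstrap on $\|u\|_{\wt{L}^\infty_t(\dH^{\f12+2\ga})}\le 2C\|u_0\|_{\dH^{\f12+2\ga}}$ (the paper's $T^\star_4$ in \eqref{S5eq4}), and the same recombination via Bernstein as in \eqref{S3eq7} and \eqref{S5eq18}. The one genuine divergence is the choice of scaling-invariant Gronwall weight. You estimate the convection term by $\|u\cdot\na u_j\|_{L^2}\le\|u\|_{L^6}\|\na u_j\|_{L^3}$ and close through $\|u\|_{L^4_t(\dH^1)}^4$, which you control by interpolating $\dH^1$ between $\dH^{\f12+2\ga}$ and $\dH^{\f32+2\ga}$ and applying H\"older in time to get the factor $X(t)^4\,t^{4\ga}$. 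The paper instead invokes the product law \eqref{S3eq0} to get $\|u\cdot\na u_j\|_{L^2}\le C\|u\|_{\dH^{\f12+2\ga}}\|\na u_j\|_{L^2}^{2\ga}\|\na^2u_j\|_{L^2}^{1-2\ga}$, absorbs $\|\na^2u_j\|_{L^2}$ by Young's inequality, and runs Gronwall with the weight $\exp\bigl(\f{C}\ga\int_0^t\|u\|_{\dH^{\f12+2\ga}}^{\f1\ga}\,dt'\bigr)$ (see \eqref{S5eq7}--\eqref{S5eq1}), which under the bootstrap is bounded by $t\,(2C\|u_0\|_{\dH^{\f12+2\ga}})^{\f1\ga}$ using only the $\wt{L}^\infty_t(\dH^{\f12+2\ga})$ component. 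The two weights are interchangeable and dictate the identical lifespan exponent $T_\ga\sim\|u_0\|_{\dH^{\f12+2\ga}}^{-\f1\ga}$; the paper's version is marginally simpler at the bootstrap stage (no time-interpolation of the dissipation is needed), while yours makes the role of the invariant Leray quantity $\|u\|_{L^4_t(\dH^1)}$ more transparent and avoids the Besov product lemma in favor of elementary H\"older and Sobolev embeddings.

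One remark on your closing caveat: the difficulty you flag at $\ga=\f14$ is real. At $s=1$ the low-frequency kernel $2^{(j-j')(s-1)}$, $j'\le j$, degenerates to the constant sequence, which is not bounded on $\ell^2$ by convolution, and indeed Lemma \ref{S2lem1} at $s=1$ asks for derivatives of order $|\al|\in\{0,[s]+1\}=\{0,2\}$, i.e.\ an $L^\infty_t(L^2)$ bound on $\na^2u_{j'}$ that the basic estimates do not supply. You should know, however, that the paper's own proof does not treat this endpoint either: \eqref{S5eq18} is asserted for all $\ga\in]0,1/4]$ with exactly this non-summable kernel at $\ga=\f14$ and no supplementary argument. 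So your write-up is no weaker than the published one at the endpoint, and for $\ga\in]0,1/4[$ both arguments close cleanly.
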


\begin{prop}\label{S5prop2}
{\sl Under the assumptions of Proposition \ref{S5prop1}, we have
 \beq \label{S5eq10}
    \|\sqrt{t}\na u\|_{\wt{L}^\infty_t(\dH^{\f12+2\ga})}+\|\sqrt{t}u_t\|_{L^2_t(\dH^{\f12+2\ga})}\leq
   C \|u_0\|_{\dH^{\f12+2\ga}} \quad\mbox{for}\ \ t\leq T_\ga.
  \eeq}
  \end{prop}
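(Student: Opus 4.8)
The plan is to follow the proof of Proposition \ref{S3prop2} essentially line by line, working with the frequency pieces $u_j$ of \eqref{S3eq1}--\eqref{S3eq1a} but replacing the Besov $\ell^1$ bookkeeping (the sequence $d_j$) by the Sobolev $\ell^2$ one (the sequence $c_j$) and restricting every statement to $t\le T_\ga$. From the proof of Proposition \ref{S5prop1} I would first record, for $t\le T_\ga$, the two unweighted bounds that are the exact $\dH^{\f12+2\ga}$ analogues of \eqref{S3eq2} and \eqref{S3eq6}, namely
\[
\|u_j\|_{L^\infty_t(L^2)}+\|\na u_j\|_{L^2_t(L^2)}\le Cc_j2^{-j(\f12+2\ga)}\|u_0\|_{\dH^{\f12+2\ga}},
\]
and, via the elliptic estimate \eqref{S3eq5},
\[
\|\na u_j\|_{L^\infty_t(L^2)}+\bigl\|(\sqrt\r\,\p_tu_j,\na^2u_j,\na\pi_j)\bigr\|_{L^2_t(L^2)}\le Cc_j2^{j(\f12-2\ga)}\|u_0\|_{\dH^{\f12+2\ga}}.
\]

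Next I would produce two weighted estimates on each $u_j$. For the first level I take the $L^2$ inner product of the momentum equation of \eqref{S3eq1} with $\p_tu_j$, multiply the resulting identity \eqref{S3eq2a} by $t$, integrate over $[0,t]$, and integrate $\int_0^t s\,\f{d}{ds}\|\na u_j\|_{L^2}^2\,ds$ by parts in $s$; the leftover $\f12\int_0^t\|\na u_j\|_{L^2}^2\,ds$ is the first unweighted bound, while the convection term is absorbed by means of \eqref{S3eq5} together with $\|u\|_{L^\infty_t(\dH^{\f12})}\le c_1$ (as in \eqref{S3eq4}). This gives the \emph{low} weighted bound $\|\sqrt t\na u_j\|_{L^\infty_t(L^2)}+\|\sqrt t\sqrt\r\,\p_tu_j\|_{L^2_t(L^2)}\lesssim c_j2^{-j(\f12+2\ga)}\|u_0\|_{\dH^{\f12+2\ga}}$. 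For the second level --- and this is the crux --- I would differentiate the momentum equation of \eqref{S3eq1} in time, substitute $\p_t\r=-\dive(\r u)$, and test the result against $t\,\p_tu_j$. The decisive point is that each $u_j$ is smooth with $\p_tu_j\in L^2_t(L^2)$ \emph{without} weight by the second unweighted bound, so the term $-\f12\int_0^t\|\sqrt\r\,\p_su_j\|_{L^2}^2\,ds$ produced when $\p_t$ falls on the weight $t$ is already controlled, and the convection and $\p_t\r$ contributions are handled by Hölder's inequality, Sobolev embedding and the bounds of Proposition \ref{S5prop1}. This yields the \emph{high} weighted bound $\|\sqrt t\sqrt\r\,\p_tu_j\|_{L^\infty_t(L^2)}+\|\sqrt t\,\na\p_tu_j\|_{L^2_t(L^2)}\lesssim c_j2^{j(\f12-2\ga)}\|u_0\|_{\dH^{\f12+2\ga}}$, and hence, through \eqref{S3eq5} once more, $\|\sqrt t\na^2u_j\|_{L^\infty_t(L^2)}\lesssim c_j2^{j(\f12-2\ga)}\|u_0\|_{\dH^{\f12+2\ga}}$.

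With these low and high weighted bounds I would reconstruct \eqref{S5eq10} exactly as at the end of the proof of Proposition \ref{S3prop1}: writing $\sqrt t\na u=\sum_{j'}\sqrt t\na u_{j'}$, applying $\D_j$ and Bernstein's inequality, using the low bound on the block $j'\ge j$ and the high bound together with the $2^{-j}$ Bernstein gain on the block $j'\le j$, and then summing against the weight $2^{j(\f12+2\ga)}$ in $\ell^2_j$. This reduces the first term of \eqref{S5eq10} to two discrete convolutions of $(c_j)\in\ell^2$ with the kernels $2^{-|k|(\f12+2\ga)}$ and $2^{-|k|(\f12-2\ga)}$; the term $\|\sqrt tu_t\|_{L^2_t(\dH^{\f12+2\ga})}$ is obtained identically from the $L^2_t$ parts of the two levels, using that $L^2_t(\dH^s)=\wt L^2_t(\dH^s)$.

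The main obstacle will be the endpoint $\ga=1/4$. There $\f12-2\ga=0$, the second convolution kernel $2^{-|k|(\f12-2\ga)}$ ceases to be summable, and the reconstruction of the low-frequency block develops a logarithmic loss, so the scheme above closes cleanly only for $\ga\in\,]0,1/4[$. I expect to recover the endpoint $\ga=1/4$ (the Leray level $\dH^1$) either by interpolation from $\ga<1/4$ or by a separate, more classical $\dH^1$ energy argument, for which the integer regularity makes the double summation unnecessary. A secondary point to verify is uniformity of all constants on $[0,T_\ga]$: the absorptions in both weighted levels require $\|u\|_{L^\infty_t(\dH^{\f12})}$ to be small, which holds on $[0,T_\ga]$ by interpolating $\dH^{\f12}$ between $L^2$ (the energy law \eqref{S1eq11}) and $\dH^{\f12+2\ga}$ (Proposition \ref{S5prop1}) and invoking the definition $T_\ga=c_\ga\|u_0\|_{\dH^{\f12+2\ga}}^{-1/\ga}$.
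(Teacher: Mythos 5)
Your architecture matches the paper's (the same decomposition into the $u_j$ of \eqref{S3eq1}, unweighted plus $\sqrt t$-weighted energy levels with $\ell^2$ coefficients $c_j$, and the two-block Bernstein reconstruction), but the absorption mechanism you rely on is a genuine gap. Theorem \ref{thm3} is a \emph{large-data} lifespan result: there is no smallness hypothesis, so the device \eqref{S3eq4}--\eqref{S3eq5} from Section \ref{Sect2} is simply unavailable. Your claim that $\|u\|_{L^\infty_t(\dH^{\f12})}\leq c_1$ holds on $[0,T_\ga]$ ``by interpolating between the energy law and Proposition \ref{S5prop1} and invoking the definition of $T_\ga$'' is false: interpolation gives $\|u\|_{L^\infty_t(\dH^{\f12})}\lesssim \|u_0\|_{L^2}^{\f{4\ga}{1+4\ga}}\|u_0\|_{\dH^{\f12+2\ga}}^{\f1{1+4\ga}}$, a fixed constant of arbitrary size, independent of how small $T_\ga$ is; indeed $\|u(0)\|_{\dH^{\f12}}=\|u_0\|_{\dH^{\f12}}$ is already large at $t=0$, and no choice of lifespan can shrink it. What the definition of $T_\ga$ buys is \emph{time-integrated} smallness, and that is exactly how the paper closes the estimate: the convection term is bounded by the product law \eqref{S5eq16}, $\|u\cdot\na u_j\|_{L^2}\leq C\|u\|_{\dH^{\f12+2\ga}}\|\na u_j\|_{L^2}^{2\ga}\|\na^2u_j\|_{L^2}^{1-2\ga}$, and Young's inequality absorbs $\|\na^2u_j\|_{L^2}$ at the price of the Gronwall weight $\f{C}\ga\|u\|_{\dH^{\f12+2\ga}}^{\f1\ga}$, whose exponential is controlled on $[0,T_\ga]$ by \eqref{S5eq13}. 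This yields \eqref{S5eq7}, and multiplying by $t$ gives the low weighted bound \eqref{S5eq8}. Likewise your use of \eqref{S3eq5} for $\|\sqrt t\na^2u_j\|_{L^\infty_t(L^2)}$ must be replaced: the paper uses $\|\na^2u_j\|_{L^2}\leq C(\|\p_tu_j\|_{L^2}+\|u\|_{\dH^{\f12+2\ga}}^{\f1{2\ga}}\|\na u_j\|_{L^2})$ together with $\sqrt{T_\ga}\,\|u\|_{L^\infty_t(\dH^{\f12+2\ga})}^{\f1{2\ga}}\leq C$ in \eqref{S5eq17} --- again the definition of $T_\ga$, not pointwise smallness of a subcritical norm.

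The same problem infects your second weighted level. The paper does not redo that computation; it reuses \eqref{S3eq17a}, and the whole point in the large-data setting is to bound the Gronwall factor $\exp\bigl(\int_0^t(\|u\|_{B^{\f32}}^2+\|t'^{\f14}u_t\|_{L^2}^2)\,dt'\bigr)$: this needs \eqref{S5eq6}, i.e.\ $\|u\|_{L^2_t(B^{\f32})}\leq C\|u_0\|_{H^{\f12+2\ga}}$, obtained by interpolating the $L^2$ energy law \eqref{S1eq11} against \eqref{S5eq11} (Proposition \ref{S5prop1} alone does not give it), and \eqref{S5eq9} for $\|t^{\f14}u_t\|_{L^2_t(L^2)}$, obtained by interpolating the unweighted and weighted $\p_tu_j$ bounds and summing, using $u_0\in H^{\f12+2\ga}\hookrightarrow B^{\f12}$. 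Your sketch leaves these uncontrolled, or implicitly leans on the same unavailable smallness. Finally, your case-splitting at $\ga=\f14$ is an artifact of your scheme: the paper's mechanism (Young's inequality with exponent $\f1{2\ga}$, plus \eqref{S5eq13}) is uniform on $]0,1/4]$ and the paper makes no separate endpoint argument. (Your observation that the high-block kernel $2^{-k(\f12-2\ga)}$ becomes borderline at $\ga=\f14$ in the $\ell^2$ reconstruction is a fair remark about that final summation step as written, but it is not where your proposal diverges from the paper; the decisive difference is the absorption mechanism above.)
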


 \begin{prop}\label{S5prop3}
{\sl Under the assumptions of Proposition \ref{S5prop1}, we have
 \beq
\label{S5eq21}
 \|tD_tu\|_{\wt{L}^\infty_t(\dH^{\f12+2\ga})}+\|t\na D_tu\|_{\wt{L}^2_t(\dH^{\f12+2\ga})}\leq C\|u_0\|_{\dH^{\f12+2\ga}}
\quad\mbox{for} \ \ t\leq T_\ga.
 \eeq
}\end{prop}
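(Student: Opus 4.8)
The plan is to carry the three-tier scheme of Propositions \ref{S5prop1} and \ref{S5prop2} one step further, up to the material derivative. Because $D_t=\p_t+u\cdot\na$ is built from the single transport field $u=\sum_{j\in\Z}u_j$ with $u_j$ solving \eqref{S3eq1}, one has $D_t u=\sum_{j\in\Z}D_t u_j$, so it is enough to estimate each $w_j\eqdef D_t u_j=\p_t u_j+u\cdot\na u_j$ and then to reassemble the $\dH^{\f12+2\ga}$ norms through Bernstein's inequality and Lemma \ref{S2lem1}. First I would derive the equation for $w_j$. Writing the momentum equation of \eqref{S3eq1} as $\r D_t u_j=\D u_j-\na\pi_j$ and applying $D_t$, the identity $D_t\r=0$ preserves the parabolic structure and gives
\beq\label{DtA}
\r D_t w_j-\D w_j+\na\varpi_j=\dot R_j,\qquad \dive w_j=\na u:\na u_j,
\eeq
with new pressure $\varpi_j\eqdef D_t\pi_j$ and source
\beq\label{DtB}
\dot R_j\eqdef-[\D;u\cdot\na]u_j+[\na;u\cdot\na]\pi_j=-(\D u)\cdot\na u_j-2\p_k u\cdot\p_k\na u_j+(\na u)\cdot\na\pi_j .
\eeq
I record at once that $w_j$ is no longer divergence free, the defect $\na u:\na u_j$ being quadratic and thus of lower order for the estimates below.

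Next I would run two weighted energy estimates with weight $t^2$, in parallel with \eqref{S3eq2} and \eqref{S3eq2a}. Taking the $L^2$ inner product of \eqref{DtA} with $t^2w_j$ and using $D_t\r=0$, $\dive u=0$ yields
\beq\label{DtC}
\f12\f{d}{dt}\bigl(t^2\|\sqrt\r w_j\|_{L^2}^2\bigr)+t^2\|\na w_j\|_{L^2}^2=t\|\sqrt\r w_j\|_{L^2}^2+t^2\bigl(\dot R_j\,|\,w_j\bigr)+t^2\bigl(\varpi_j\,|\,\na u:\na u_j\bigr).
\eeq
The weight term $t\|\sqrt\r w_j\|_{L^2}^2$ integrates to $\|\sqrt t\,w_j\|_{L^2_t(L^2)}^2$, already controlled at the level of Proposition \ref{S5prop2} since $\sqrt t\,w_j=\sqrt t\,\p_t u_j+\sqrt t\,u\cdot\na u_j$; this produces $\|t\,w_j\|_{L^\infty_t(L^2)}$ and $\|t\na w_j\|_{L^2_t(L^2)}$. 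A second estimate, pairing \eqref{DtA} with $t^2\p_t w_j$, produces $\|t\na w_j\|_{L^\infty_t(L^2)}$ together with $\|t\sqrt\r\p_t w_j\|_{L^2_t(L^2)}$. To reach the top order I would reformulate \eqref{DtA} as the stationary Stokes system $-\D w_j+\na\varpi_j=-\r D_t w_j+\dot R_j$, $\dive w_j=\na u:\na u_j$, and apply the elliptic estimate exactly as in \eqref{S3eq3}--\eqref{S3eq5}; under the smallness \eqref{S3eq4} of $\|u\|_{L^\infty_t(\dH^{1/2})}$, guaranteed on $[0,T_\ga]$ by Proposition \ref{S5prop1}, this bounds $\|\na^2 w_j\|_{L^2}+\|\na\varpi_j\|_{L^2}$ by $\|\sqrt\r D_t w_j\|_{L^2}$ plus the contribution of the divergence defect, hence delivers $\|t\na^2 w_j\|_{L^2_t(L^2)}$.

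Each of these estimates produces, for every $j$, a low-order bound decaying in $j$ and a high-order bound growing in $j$, both proportional to $\|\D_j u_0\|_{L^2}=c_j2^{-j(\f12+2\ga)}\|u_0\|_{\dH^{\f12+2\ga}}$ with $(c_j)\in\ell^2$. Summing over $j$ by the Littlewood--Paley scheme used in the chain of inequalities following \eqref{S3eq6} --- splitting $\D_j(tD_t u)=\sum_{j'}\D_j(tw_{j'})$ and gaining negative powers of $2^j$ on the low-frequency interactions through Bernstein's inequality --- the $L^\infty_t$ bounds reassemble $\|tD_t u\|_{\wt{L}^\infty_t(\dH^{\f12+2\ga})}$ and the $L^2_t$ bounds reassemble $\|t\na D_t u\|_{\wt{L}^2_t(\dH^{\f12+2\ga})}$, in the spirit of Lemma \ref{S2lem1}; this is precisely \eqref{S5eq21}. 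At the endpoint $\ga=\f14$ the summation is borderline and must be closed using the order-two bounds rather than the order-one ones.

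The main difficulty is the control of the commutator source $\dot R_j$ and of the divergence defect. The terms $2\p_k u\cdot\p_k\na u_j$ and $(\D u)\cdot\na u_j$ are genuinely of top order (two derivatives on $u_j$, or two on $u$), and in the pairings $\bigl(\dot R_j\,|\,w_j\bigr)$ and $\bigl(\dot R_j\,|\,\p_t w_j\bigr)$ they must be absorbed into $t^2\|\na w_j\|_{L^2}^2$ and $t^2\|\sqrt\r\p_t w_j\|_{L^2}^2$; this is where I would distribute the weight $t^2$ among the factors according to the time-weighted norms furnished by Propositions \ref{S5prop1} and \ref{S5prop2}, and invoke the product laws in $\dH^{\f12+2\ga}$, which close precisely because $\f12+2\ga\le1$, i.e. $\ga\le\f14$, together with the a priori smallness on $[0,T_\ga]$. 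The pressure/divergence coupling $t^2\bigl(\varpi_j\,|\,\na u:\na u_j\bigr)$ is the most delicate point, since $\varpi_j$ is known only through its gradient: I expect to handle it by integrating by parts onto $\na u:\na u_j$ and using the elliptic bound on $\na\varpi_j$, or by a Bogovskii-type lifting of the divergence defect, so that the entire right-hand side of \eqref{DtC} stays integrable in time near $t=0$ and summable in $j$.
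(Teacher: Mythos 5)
Your scheme is essentially the paper's own proof: the paper proves this proposition by repeating the argument of Proposition \ref{S3prop3} in the $\dH^{\f12+2\ga}$ setting --- the same equation $\r D_t^2u_j-\D D_tu_j+\na D_t\pi_j=f_j$ with exactly your commutator source and divergence defect $\dive D_tu_j={\rm Tr}(\na u\,\na u_j)$, the same $t^2$-weighted energy estimate (tested with $D_t^2u_j$ rather than your $\p_t(D_tu_j)$, which keeps $D_t\r=0$ fully in play and folds the convection into the good quadratic term), the same elliptic control of $\na D_t\pi_j$ and $\na^2D_tu_j$ as in \eqref{S3eq20}, and the same Bernstein reassembly --- with Remark \ref{S5rmk2} supplying the $\ga$-adapted auxiliary bounds \eqref{S5eq19}, \eqref{S3eq16} and \eqref{S5eq20}. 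The one shortcut you miss is that the low-order bound \eqref{S5eq19} is obtained purely algebraically from $tD_tu_j=t\p_tu_j+t\,u\cdot\na u_j$, the $t^2$-weighted estimate for $\p_tu_j$ and the product laws, so no pairing of the $D_tu_j$-equation with $t^2D_tu_j$ is needed, and the pressure/divergence coupling you rightly flag as most delicate arises only in the top-order estimate, where it is resolved exactly as you predict: one writes ${\rm Tr}(\na u\,\na u_j)=\dive(u\cdot\na u_j)$ (and its time derivative in divergence form) so that every pressure term lands on $\na D_t\pi_j$.
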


 By summing up Proposition \ref{S5prop1} to Proposition \ref{S5prop3}, we conclude the proof of Theorem \ref{thm3}.
 The detailed proof of Propositions \ref{S5prop1} to \ref{S5prop3} will be presented in Section \ref{Sect5}.

 For the convenience of the readers, we shall collect some basic facts on Littlewood-Paley theory in the Appendix \ref{apA}.

\medskip

 \setcounter{equation}{0}
 \section{The proof of Theorem \ref{thm1}}\label{Sect3}

 The purpose of this section is to present the proof of Propositions \ref{S3prop2} to \ref{S3prop3}. We first deduce
 the following
 corollary  from the proof of Proposition \ref{S3prop1} in Section \ref{Sect2}.

\begin{col}
{\sl Let $(u_j,\na\pi_j)$ be dertermined by \eqref{S3eq1}, we have
\beq \label{S3eq9} \|\sqrt{t}\na u_j\|_{L^\infty_t(L^2)}+
\bigl\|\sqrt{t}(\p_tu_j,\na^2u_j,\na\pi_j)\bigr\|_{L^2_t(L^2)}\leq Cd_j2^{-\f{j}2}\|u_0\|_{B^{\f12}}\quad\mbox{for any }\ t>0.
\eeq}
\end{col}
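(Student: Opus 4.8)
The plan is to revisit the energy argument from the proof of Proposition \ref{S3prop1} and multiply its basic differential inequality by the weight $t$, thereby converting the frequency factor $2^{\f j2}$ appearing in \eqref{S3eq6} into the desired $2^{-\f j2}$. Recall that, after absorbing the convection term by the smallness of $\|u\|_{L^\infty_t(\dH^{\f12})}$, that proof produced
\beq\label{S3corA}
\f{d}{dt}\|\na u_j(t)\|_{L^2}^2+\bigl\|\sqrt{\rho}\,\p_tu_j\bigr\|_{L^2}^2\leq 0\quad\mbox{for}\ \ t\leq T^\star_1,
\eeq
and that, since Proposition \ref{S3prop1} yields $T^\ast=\infty$ with $T^\star_1$ being any number smaller than $T^\ast$, this inequality in fact holds for all $t>0$.

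First I would multiply \eqref{S3corA} by $t$ and use the identity $\f{d}{dt}\bigl(t\|\na u_j\|_{L^2}^2\bigr)=\|\na u_j\|_{L^2}^2+t\f{d}{dt}\|\na u_j\|_{L^2}^2$ to get, after integrating over $[0,t]$ (the boundary term at $t=0$ vanishing since $\na u_j(0)=\na\D_ju_0\in L^2$),
\beq\label{S3corB}
t\|\na u_j(t)\|_{L^2}^2+\int_0^t t'\bigl\|\sqrt{\rho}\,\p_tu_j\bigr\|_{L^2}^2\,dt'\leq \int_0^t\|\na u_j(t')\|_{L^2}^2\,dt'.
\eeq
The gain lies entirely in the right-hand side: it equals $\|\na u_j\|_{L^2_t(L^2)}^2$, which \eqref{S3eq2} already controls by $Cd_j^2 2^{-j}\|u_0\|_{B^{\f12}}^2$, whence the factor $2^{-j}$ improvement over \eqref{S3eq6}.

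Taking the supremum in time in \eqref{S3corB} and then a square root gives the bound for $\sqrt t\,\na u_j$ in $L^\infty_t(L^2)$ and for $\sqrt t\,\sqrt{\rho}\,\p_tu_j$ in $L^2_t(L^2)$; the positive lower bound $c_0\leq\rho$ from \eqref{S1eq14} then lets me drop the factor $\sqrt{\rho}$ at the cost of $c_0^{-\f12}$. Finally, multiplying the elliptic estimate \eqref{S3eq5} by $\sqrt t$ and taking its $L^2_t(L^2)$ norm transfers this control to $\sqrt t\,\na^2u_j$ and $\sqrt t\,\na\pi_j$, which yields \eqref{S3eq9}. I expect no genuine obstacle here: this is a routine parabolic time-weight estimate, and the only points requiring care are the vanishing of the $t=0$ boundary term and the observation that, thanks to $T^\ast=\infty$, every step is legitimate on all of $[0,\infty[$.
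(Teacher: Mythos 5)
Your proof is correct and is essentially the paper's own argument: multiply the $\p_tu_j$-energy inequality by $t$, use $\|\na u_j\|_{L^2_t(L^2)}\lesssim d_j2^{-\f{j}2}\|u_0\|_{B^{\f12}}$ from \eqref{S3eq2} to convert the $2^{\f{j}2}$ of \eqref{S3eq6} into $2^{-\f{j}2}$, and transfer the bound to $(\na^2u_j,\na\pi_j)$ via the Stokes estimate \eqref{S3eq5} together with $\r\geq c_0$. The only (harmless) deviation is that you reuse the fully absorbed inequality $\f{d}{dt}\|\na u_j\|_{L^2}^2+\|\sqrt{\r}\p_tu_j\|_{L^2}^2\leq 0$ from the proof of Proposition \ref{S3prop1} --- legitimate for all $t>0$ since the continuation argument makes $T^\star_1$ arbitrary and $T^\ast=\infty$ --- thereby avoiding Gronwall entirely, whereas the paper re-estimates the convection term in \eqref{S3eq2a} by $C\|u\|_{B^{\f32}}^2\|\na u_j\|_{L^2}^2+\f12\|\sqrt{\r}\p_tu_j\|_{L^2}^2$ and closes with Gronwall's inequality, the factor $\exp\bigl(C\|u\|_{L^2_t(B^{\f32})}^2\bigr)$ being controlled by \eqref{S3eq7}.
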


\begin{proof} Indeed thanks to \eqref{S3eq2a}, we have
\beno
\begin{split}
\f12\f{d}{dt}\|\na u_j(t)\|_{L^2}^2+\bigl\|\sqrt{\rho}\p_tu_j\bigr\|_{L^2}^2\leq &C\|u\|_{L^\infty}\|\na u_j\|_{L^2}\|\sqrt{\r}\p_t u_j\|_{L^2}\\
\leq & C\|u\|_{B^{\f32}}^2\|\na u_j\|_{L^2}^2+\f12\|\sqrt{\r}\p_t u_j\|_{L^2}^2.
\end{split}
\eeno
Multiplying the above inequality by $t$ gives rise to
\beno
\f{d}{dt}\bigl(t\|\na u_j(t)\|_{L^2}^2\bigr)+t\bigl\|\sqrt{\rho}\p_tu_j\bigr\|_{L^2}^2\leq \|\na u_j\|_{L^2}^2
+C\|u\|_{B^{\f32}}^2t\|\na u_j\|_{L^2}^2.
\eeno
Applying Gronwall's inequality and then using \eqref{S3eq7} and \eqref{S3eq2} that
\beq \label{S3eq8}
\begin{split}
\|\sqrt{t}\na u_j\|_{L^\infty_t(L^2)}^2+\|\sqrt{t}\p_tu_j\|_{L^2_t(L^2)}^2\leq &\|\na u_j\|_{L^2_t(L^2)}^2\exp\bigl(C\|u\|_{L^2_t(B^{\f32})}^2\bigr)\\
\leq & Cd_j^22^{-{j}}\|u_0\|_{B^{\f12}}^2,
\end{split}
\eeq
which together with \eqref{S3eq5} implies \eqref{S3eq9}\end{proof}

\begin{proof}[Proof of Proposition \ref{S3prop2}] Applying $\p_t$ to the momentum equation of \eqref{S3eq1} yields
\beno
\r\p_t^2u_j+\r u\cdot\na \p_t u_j-\D \p_tu_j+\na\p_t\pi_j=-\r_tD_tu_j-\r u_t\cdot\na u_j.
\eeno
Here and in all that follows, we always denote $D_t\eqdefa \p_t+u\cdot\na$ to be the material derivative.

By taking $L^2$ inner product of the above equation with $\p_tu_j$ and making use of the transport equation of
\eqref{S1eq1}, we obtain
\beq \label{S3eq10}
\f12\f{d}{dt}\int_{\R^3}\r|\p_tu_j|^2\,dx+\|\na\p_tu_j\|_{L^2_t(L^2)}^2=-\int_{\R^3}\r_tD_tu_j | \p_tu_j\,dx-
\int_{\R^3}\r u_t\cdot\na u_j  | \p_tu_j\,dx.
\eeq
Let us now handle term by term above.

\noindent\underline{$\bullet$ \it  Estimate for $\int_{\R^3}\r_t\p_tu_j | \p_tu_j\,dx.$ }

By virtue of the transport equation of \eqref{S1eq1}, we get, by using integration by parts, that
\beno
\begin{split}
-\int_{\R^3}\r_t\p_tu_j | \p_tu_j\,dx=&\int_{\R^3}u\cdot\na\r |\p_tu_j|^2\,dx\\
=&-2\int_{\R^3}\r u \cdot\na \p_tu_j | \p_tu_j\,dx,
\end{split}
\eeno
which leads to
\beno
\begin{split}
\bigl|\int_{\R^3}\r_t\p_tu_j | \p_tu_j\,dx\bigr|\leq &C\|u\|_{L^\infty}\|\sqrt{\r}\p_tu_j\|_{L^2}\|\na\p_tu_j\|_{L^2}\\
\leq &C\|u\|_{B^{\f32}}^2\|\sqrt{\r}\p_tu_j\|_{L^2}^2+\f16\|\na\p_tu_j\|_{L^2}^2.
\end{split}
\eeno

\noindent\underline{$\bullet$ \it  Estimate for $\int_{\R^3}\r_tu\cdot\na u_j | \p_tu_j\,dx.$ }

Once again we get, by using integration by parts, that
\beno
\begin{split}
-\int_{\R^3}\r_tu\cdot\na u_j | \p_tu_j\,dx=&-\int_{\R^3}\bigl(\r u\cdot\na u\bigr)\cdot\na u_j | \p_tu_j\,dx\\
&-\int_{\R^3}\r (u\otimes u):\na^2 u_j | \p_tu_j\,dx-\int_{\R^3} u\cdot\na u_j | \r u\cdot\na \p_tu_j\,dx.
\end{split}
\eeno
It follows from \eqref{S3eq5} that
\beno
\begin{split}
\bigl|\int_{\R^3}\bigl(\r u\cdot\na u\bigr)\cdot\na u_j | \p_tu_j\,dx\bigr|\leq &C\|u\|_{L^\infty}\|\na u\|_{L^3}\|\na u_j\|_{L^6}\|\sqrt{\r}\p_tu_j\|_{L^2}\\
\leq &C\|u\|_{B^{\f32}}^2\|\na^2u_j\|_{L^2}\|\sqrt{\r}\p_tu_j\|_{L^2}\\
\leq &C\|u\|_{B^{\f32}}^2\|\sqrt{\r}\p_tu_j\|_{L^2}^2.
\end{split}
\eeno
The same estimate holds for $\int_{\R^3}\r (u\otimes u):\na^2 u_j | \p_tu_j\,dx.$  Moreover, along the same line, we have
\beno
\begin{split}
\bigl|\int_{\R^3} u\cdot\na u_j | \r u\cdot\na \p_tu_j\,dx\bigr|\leq &C\|u\|_{L^\infty}\|\na u_j\|_{L^6}\|u\|_{L^3}\|\na\p_tu_j\|_{L^2}\\
\leq &C\|u\|_{B^{\f12}}\|u\|_{B^{\f32}}\|\na^2u_j\|_{L^2}\|\na\p_tu_j\|_{L^2}\\
\leq &C\|u\|_{B^{\f12}}^2\|u\|_{B^{\f32}}^2\|\sqrt{\r}\p_tu_j\|_{L^2}^2+\f16\|\na\p_tu_j\|_{L^2}^2.
\end{split}
\eeno
As a result, it comes out
\beno
\bigl|\int_{\R^3}\r_tu\cdot\na u_j | \p_tu_j\,dx\bigr|\leq C\bigl(1+\|u\|_{B^{\f12}}^2\bigr)\|u\|_{B^{\f32}}^2\|\sqrt{\r}\p_tu_j\|_{L^2}^2+\f16\|\na\p_tu_j\|_{L^2}^2.
\eeno

\noindent\underline{$\bullet$ \it  Estimate for $\int_{\R^3}\r u_t\cdot\na u_j  | \p_tu_j\,dx.$ }

\beno
\begin{split}
\bigl|\int_{\R^3}\r u_t\cdot\na u_j  | \p_tu_j\,dx\bigr|\leq &C\|u_t\|_{L^2}\|\na u_j\|_{L^3}\|\p_tu_j\|_{L^6}\\
\leq &C\|u_t\|_{L^2}\|\na u_j\|_{L^2}^{\f12}\|\na^2u_j\|_{L^2}^{\f12}\|\na\p_tu_j\|_{L^2}\\
\leq &C\|u_t\|_{L^2}^2\|\na u_j\|_{L^2}\|\na^2u_j\|_{L^2}+\f16\|\na\p_tu_j\|_{L^2}^2\\
\leq &C\|u_t\|_{L^2}^2\|\na u_j\|_{L^2}\|\sqrt{\r}\p_tu_j\|_{L^2}+\f16\|\na\p_tu_j\|_{L^2}^2,
\end{split}
\eeno
where in the last step, we used \eqref{S3eq5}.

Inserting the above estimates into \eqref{S3eq10} gives rise to
\beq \label{S3eq17}
\begin{split}
\f{d}{dt}\int_{\R^3}\r|\p_tu_j|^2\,dx+\|\na\p_tu_j\|_{L^2_t(L^2)}^2\leq C\Bigl(&\bigl(1+\|u\|_{B^{\f12}}^2\bigr)\|u\|_{B^{\f32}}^2\|\sqrt{\r}\p_tu_j\|_{L^2}^2\\
&\quad+\|u_t\|_{L^2}^2\|\na u_j\|_{L^2}\|\sqrt{\r}\p_tu_j\|_{L^2}\Bigr).
\end{split}
\eeq
Multiplying the above inequality by $t$ and using \eqref{S3eq7} yields
\beno
\begin{split}
\f{d}{dt}\|\sqrt{t\r}\p_tu_j\|_{L^2}^2+\|\sqrt{t}\na\p_tu_j\|_{L^2_t(L^2)}^2
\leq &\|\sqrt{\r}\p_tu_j\|_{L^2}^2+C\|t^{\f14}u_t\|_{L^2}^2\|\na u_j\|_{L^2}^2\\
&+C\bigl(\|u\|_{B^{\f32}}^2+\|t^{\f14}u_t\|_{L^2}^2\bigr)\|\sqrt{t\r}\p_tu_j\|_{L^2}^2.
\end{split}
\eeno
By applying Gronwall's inequality and using \eqref{S1eq14}, we achieve
\beq \label{S3eq17a}
\begin{split}
\|\sqrt{t}\p_tu_j\|_{L^\infty_t(L^2)}^2+\|\sqrt{t}\na\p_tu_j\|_{L^2_t(L^2)}^2&\leq C\exp\Bigl(\int_0^t\bigl(\|u\|_{B^{\f32}}^2+\|t^{\f14}u_t\|_{L^2}^2\bigr)\,dt'\Bigr)
\\
&\times \bigl(\|\sqrt{\r}\p_tu_j\|_{L^2_t(L^2)}^2
+\|t^{\f14}u_t\|_{L^2_t(L^2)}^2\|\na u_j\|_{L^\infty_t(L^2)}^2\bigr).
\end{split}
\eeq On the other hand, we deduce from \eqref{S3eq6} and \eqref{S3eq9} that
\beno
\begin{split}
\|t^{\f14}\p_tu_j\|_{L^2_t(L^2)}\leq &\|\p_tu_j\|_{L^2_t(L^2)}^{\f12}\|\sqrt{t}\p_tu_j\|_{L^2_t(L^2)}^{\f12}\\
\leq & Cd_j\|u_0\|_{B^{\f12}},
\end{split}
\eeno
so that it follows from \eqref{S3eq1a}
\beq \label{S3eq12}
\|t^{\f14}\p_tu\|_{L^2_t(L^2)}\leq \sum_{j\in\Z}\|t^{\f14}\p_tu_j\|_{L^2_t(L^2)}\leq C\|u_0\|_{B^{\f12}}.
\eeq
Therefore, by virtue of \eqref{S3eq7}, \eqref{S3eq6} and \eqref{S3eq12}, we obtain
\beq \label{S3eq13}
\begin{split}
\|\sqrt{t}\p_tu_j\|_{L^\infty_t(L^2)}+\|\sqrt{t}\na\p_tu_j\|_{L^2_t(L^2)}\leq & Cd_j2^{\f{j}2}\exp\bigl(C\|u_0\|_{B^{\f12}}^2\bigr)\|u_0\|_{B^{\f12}}\\
\leq & Cd_j2^{\f{j}2}\|u_0\|_{B^{\f12}},
\end{split}
\eeq
from which and \eqref{S3eq5}, we achieve
\beq \label{S3eq14}
\|\sqrt{t}\na^2u_j\|_{L^\infty_t(L^2)}\leq Cd_j2^{\f{j}2}\|u_0\|_{B^{\f12}}.
\eeq
With \eqref{S3eq9}, \eqref{S3eq13} and \eqref{S3eq14}, we get, by a similar derivation of \eqref{S3eq7}, that \eqref{S3eq15} holds for  any
$t>0.$
\end{proof}

\begin{col}\label{S3col2}
{\sl Under the assumptions of Proposition \ref{S3prop1}, for any $t>0,$ we have
\beq \label{S3eq32}
\begin{split}
\|t D_tu_j\|_{L^\infty_t(L^2)}+\|t \na D_tu_j\|_{L^2_t(L^2)}
\leq  Cd_j2^{-\f{j}2}\|u_0\|_{B^{\f12}},
\end{split}
\eeq
and
\beq \label{S3eq16}
\|\sqrt{t}\na^2u\|_{L^2_t(L^3)}\leq C\|u_0\|_{B^{\f12}}.
\eeq}
\end{col}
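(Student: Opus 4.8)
The plan is to reduce the corollary to the material-derivative splitting $D_tu_j=\p_tu_j+u\cdot\na u_j$ and to reuse the differentiated momentum equation already obtained in the proof of Proposition \ref{S3prop2}, rather than to work with the evolution equation of $D_tu_j$ directly. For the first estimate \eqref{S3eq32} I would start from the energy inequality \eqref{S3eq17} for $\p_tu_j$ and multiply it by $t^2$ instead of by $t$. Since $\dive\p_tu_j=\p_t\dive u_j=0$, the pressure makes no contribution, and after integrating in time we are left with
\beq\label{plan1}
t^2\bigl\|\sqrt{\r}\,\p_tu_j(t)\bigr\|_{L^2}^2+\int_0^t s^2\|\na\p_su_j\|_{L^2}^2\,ds
\leq \int_0^t 2s\bigl\|\sqrt{\r}\,\p_su_j\bigr\|_{L^2}^2\,ds+C\int_0^t s^2\,\mathcal N_j(s)\,ds,
\eeq
where $\mathcal N_j$ denotes the right-hand side of \eqref{S3eq17}. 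The two Gronwall-type contributions in $\mathcal N_j$ (those carrying $\|u\|_{B^{\f32}}^2$ and $\|u_t\|_{L^2}^2$) are absorbed exactly as in Proposition \ref{S3prop2}, their coefficients being integrable in time by \eqref{S3eq7} and \eqref{S3eq12}; this should produce
\beq\label{plan2}
\|t\,\p_tu_j\|_{L^\infty_t(L^2)}+\|t\,\na\p_tu_j\|_{L^2_t(L^2)}\leq Cd_j2^{-\f{j}2}\|u_0\|_{B^{\f12}}.
\eeq

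The genuinely new point, and the step I expect to be most delicate, is the term $\int_0^t 2s\|\sqrt{\r}\,\p_su_j\|_{L^2}^2\,ds$ generated by differentiating the weight $t^2$: estimated crudely through the $L^\infty_t$ bound \eqref{S3eq13} it would grow linearly in $t$, so one must instead invoke the time-square-integrability $\|\sqrt{t}\,\p_tu_j\|_{L^2_t(L^2)}\leq Cd_j2^{-\f{j}2}\|u_0\|_{B^{\f12}}$ furnished by \eqref{S3eq9}, which is precisely of the size $d_j^2 2^{-j}\|u_0\|_{B^{\f12}}^2$ demanded by \eqref{plan2}. It is here, too, that the scaling descends from $2^{\f{j}2}$ (the natural size of $\p_tu_j$) to the required $2^{-\f{j}2}$, the extra $\sqrt{t}$ of weight accounting for the gain $2^{-j}$. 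It then remains to handle $t\,u\cdot\na u_j$: writing $\na(u\cdot\na u_j)=\na u\cdot\na u_j+u\cdot\na^2u_j$ and splitting $t=\sqrt{t}\cdot\sqrt{t}$, the bound $\sup_t\sqrt{t}\|u\|_{B^{\f32}}\leq C\|u_0\|_{B^{\f12}}$ from \eqref{S3eq15} combined with $\|\sqrt{t}\,\na u_j\|_{L^\infty_t(L^2)}$ and $\|\sqrt{t}\,\na^2u_j\|_{L^2_t(L^2)}$ from \eqref{S3eq9} (using $B^{\f32}\hookrightarrow L^\infty$, $\dB^{\f12}_{2,1}\hookrightarrow L^3$ and $\dH^1\hookrightarrow L^6$) yields $\|t\,u\cdot\na u_j\|_{L^\infty_t(L^2)}+\|t\,\na(u\cdot\na u_j)\|_{L^2_t(L^2)}\leq Cd_j2^{-\f j2}\|u_0\|_{B^{\f12}}^2$, the smallness $\|u_0\|_{B^{\f12}}\leq\e_0$ turning the quadratic factor back into $Cd_j2^{-\f j2}\|u_0\|_{B^{\f12}}$. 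Adding this to \eqref{plan2} gives \eqref{S3eq32}. If instead one estimated $D_tu_j$ through its own evolution equation, the main obstacle would become the pressure term $\int D_t\pi_j\,\dive(D_tu_j)$, which no longer vanishes since $\dive(D_tu_j)=\na u:\na u_j\neq0$; the detour through $\p_tu_j$ is precisely what avoids it.

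For the second estimate \eqref{S3eq16} I would not differentiate at all but exploit the Stokes structure \eqref{S3eq2b}, i.e. $-\D u_j+\na\pi_j=-\r D_tu_j$. The $L^3$ regularity of the stationary Stokes system gives $\|\na^2u_j\|_{L^3}\lesssim\|\r D_tu_j\|_{L^3}\lesssim\|D_tu_j\|_{L^3}$, while Gagliardo--Nirenberg in $\R^3$ yields $\|D_tu_j\|_{L^3}\lesssim\|D_tu_j\|_{L^2}^{\f12}\|\na D_tu_j\|_{L^2}^{\f12}$, whence
\beq\label{plan3}
\|\sqrt{t}\,\na^2u_j\|_{L^2_t(L^3)}^2\lesssim\int_0^\infty t\,\|D_tu_j\|_{L^2}\|\na D_tu_j\|_{L^2}\,dt
\leq\|D_tu_j\|_{L^2_t(L^2)}\,\|t\,\na D_tu_j\|_{L^2_t(L^2)}.
\eeq
Bounding $\|D_tu_j\|_{L^2_t(L^2)}\lesssim\|(\na^2u_j,\na\pi_j)\|_{L^2_t(L^2)}\leq Cd_j2^{\f j2}\|u_0\|_{B^{\f12}}$ by \eqref{S3eq6} and $\|t\,\na D_tu_j\|_{L^2_t(L^2)}\leq Cd_j2^{-\f j2}\|u_0\|_{B^{\f12}}$ by \eqref{S3eq32}, the opposite powers of $2^j$ cancel, giving $\|\sqrt{t}\,\na^2u_j\|_{L^2_t(L^3)}\lesssim d_j\|u_0\|_{B^{\f12}}$. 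Summing over $j\in\Z$ by the triangle inequality in $L^2_t(L^3)$, using $\na^2u=\sum_j\na^2u_j$ from \eqref{S3eq1a} and $\sum_jd_j\lesssim1$ (Cauchy--Schwarz, each such $d_j$ being a geometric mean of two $\ell^1$ sequences), then yields \eqref{S3eq16}.
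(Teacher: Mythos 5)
Your proposal is correct, and it splits naturally into two halves relative to the paper. For \eqref{S3eq32} you do essentially exactly what the paper does: multiply \eqref{S3eq17} by $t^2$, run Gronwall with the coefficients $\|u\|_{B^{\f32}}^2+\|t^{\f14}u_t\|_{L^2}^2$ made time-integrable by \eqref{S3eq7} and \eqref{S3eq12}, absorb the weight-differentiation term $2t\|\sqrt{\r}\p_tu_j\|_{L^2}^2$ through the square-integrability in \eqref{S3eq9} (you correctly identify this as the step where the scaling drops from $2^{\f{j}2}$ to $2^{-\f{j}2}$ --- this is precisely the paper's passage to \eqref{S3eq18}), and then convert $\p_tu_j$ into $D_tu_j$ by product estimates; your Leibniz splitting of $t\na(u\cdot\na u_j)$ with the weight factored as $\sqrt{t}\cdot\sqrt{t}$ is a trivial variant of the paper's use of \eqref{S3eq0} together with $\|t\na^2u_j\|_{L^\infty_t(L^2)}\lesssim\|t\p_tu_j\|_{L^\infty_t(L^2)}$ from \eqref{S3eq5}.

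For \eqref{S3eq16} your route is genuinely different. The paper applies $L^3$ Stokes regularity to the equation satisfied by the \emph{full} solution $u$ and concludes in two lines from the global bounds \eqref{S3eq7} and \eqref{S3eq15}, via the embeddings $B^{\f12}\hookrightarrow L^3$ and $B^{\f32}\hookrightarrow L^\infty$ applied to $\sqrt{t}\p_tu$ and $\sqrt{t}\na u$. You instead work blockwise: $L^3$ Stokes regularity on \eqref{S3eq2b} gives $\|\na^2u_j\|_{L^3}\lesssim\|D_tu_j\|_{L^3}$, Gagliardo--Nirenberg interpolates $L^3$ between $L^2$ and $\dH^1$ of $D_tu_j$, and then \eqref{S3eq6} is paired against the freshly proven \eqref{S3eq32} so that the opposite powers $2^{\pm\f{j}2}$ cancel, leaving a geometric mean of two $\ell^1$ sequences that sums by Cauchy--Schwarz. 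Both are valid and yield the same constant structure. The paper's version is shorter because the Besov-level control $\|\sqrt{t}u_t\|_{\wt{L}^2_t(B^{\f12})}$ is already on the shelf; yours has the modest virtues of using only block-level energy quantities (never the Chemin--Lerner norm of $\sqrt{t}u_t$) and of illustrating the $2^{\pm\f{j}2}$-cancellation mechanism explicitly, at the cost of making \eqref{S3eq16} logically depend on \eqref{S3eq32} --- a harmless reordering, since \eqref{S3eq32} is established first and, in the paper, \eqref{S3eq16} is only consumed later in the proof of Proposition \ref{S3prop3}. Your closing remark about why one avoids the evolution equation of $D_tu_j$ at this stage (the non-vanishing $\dive D_tu_j=\na u:\na u_j$ and the resulting pressure coupling) is also accurate; the paper confronts exactly that difficulty, deliberately, in Proposition \ref{S3prop3}.
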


\begin{proof} By multiplying \eqref{S3eq17} by $t^2,$ we obtain
\beno
\begin{split}
\f{d}{dt}\|t\sqrt{\r}\p_tu_j\|_{L^2}^2+\|t\na\p_tu_j\|_{L^2_t(L^2)}^2
\leq &2\|\sqrt{t\r}\p_tu_j\|_{L^2}^2+C\|t^{\f14}u_t\|_{L^2}^2\|\sqrt{t}\na u_j\|_{L^2}^2\\
&+C\bigl(\|u\|_{B^{\f32}}^2+\|t^{\f14}u_t\|_{L^2}^2\bigr)\|t\sqrt{\r}\p_tu_j\|_{L^2}^2.
\end{split}
\eeno
Applying Gronwall's inequality gives
\beq \label{S3eq18}
\begin{split}
\|t\p_tu_j\|_{L^\infty_t(L^2)}^2+&\|t\na\p_tu_j\|_{L^2_t(L^2)}^2\leq
\exp\Bigl(C\int_0^t\bigl(\|u\|_{B^{\f32}}^2+\|t^{\f14}u_t\|_{L^2}^2\bigr)\,dt'\Bigr)\\
&\qquad\times \bigl(\|\sqrt{t}\p_tu_j\|_{L^2_t(L^2)}^2+C\|t^{\f14}u_t\|_{L^2_t(L^2)}^2\|\sqrt{t}\na u_j\|_{L^\infty_t(L^2)}^2\bigr),
\end{split}
\eeq
which together with  \eqref{S3eq9} and \eqref{S3eq12} implies
\beq \label{S3eq18}
\|t\p_tu_j\|_{L^\infty_t(L^2)}^2+\|t\na\p_tu_j\|_{L^2_t(L^2)}^2\leq
  Cd_j^22^{-j}\|u_0\|_{B^{\f12}}^2.
\eeq
Then by using \eqref{S3eq15} and \eqref{S3eq9} once again, we get
\beno
\begin{split}
\|t D_tu_j\|_{L^\infty_t(L^2)}\leq &\|t\p_tu_j\|_{L^\infty_t(L^2)}+\|\sqrt{t}u\|_{L^\infty_t(B^{\f32})}\|\sqrt{t}\na u_j\|_{L^\infty_t(L^2)}\\
\leq & Cd_j2^{-\f{j}2}\|u_0\|_{B^{\f12}}.
\end{split}
\eeno
Whereas by applying the law of product in Sobolev space, \eqref{S3eq0}, and using \eqref{S3eq5} gives rise to
\beno
\begin{split}
\|t \na D_tu_j\|_{L^2_t(L^2)}\leq &\|t\na\p_t u_j\|_{L^2_t(L^2)}+\|u\|_{L^2_t(B^{\f32})}\|t\na^2u_j\|_{L^\infty_t(L^2)}\\
\leq &\|t\na\p_t u_j\|_{L^2_t(L^2)}+C\|u\|_{L^2_t(B^{\f32})}\|t\p_tu_j\|_{L^\infty_t(L^2)}\\
\leq & Cd_j2^{-\f{j}2}\|u_0\|_{B^{\f12}},
\end{split}
\eeno
which implies \eqref{S3eq32}.

On the other hand, note from \eqref{S1eq1} that
\beno
\left\{
\begin{array}{ll}
-\Delta u+\na \pi=-\rho(\p_tu+u\cdot\na u), \\
\textrm{div} u=0,
\end{array}
\right.
\eeno
from which and regularity theory of Stokes operator, we deduce from \eqref{S3eq7} and \eqref{S3eq15} that
\beno
\begin{split}
\|\sqrt{t}\na^2u\|_{L^2_t(L^3)}\leq &C\bigl(\|\sqrt{t}\p_tu\|_{L^2_t(L^3)}+\|u\|_{L^2_t(L^\infty)}\|\sqrt{t}\na u\|_{L^\infty_t(L^3)}\bigr)
\\
\leq &C\bigl(\|\sqrt{t}\p_tu\|_{L^2_t(B^{\f12})}+\|u\|_{L^2_t(B^{\f32})}\|\sqrt{t}\na u\|_{L^\infty_t(B^{\f12})}\bigr)\\
\leq &C\|u_0\|_{B^{\f12}},
\end{split}
\eeno
which leads to \eqref{S3eq16}.
\end{proof}

\begin{proof}[Proof of Proposition \ref{S3prop3}]
It is easy to observe that
\beno
\begin{split}
[D_t; \na]f=&-\na u\cdot\na f,\andf\\
[D_t; \D]f=&-\D u\cdot\na f-2\sum_{i=1}^3\p_iu\cdot\na\p_i f.
\end{split}
\eeno
Then we get, by applying the operator $D_t$ to the momentum equation of \eqref{S3eq1}, that
\beq \label{S3eq19}
\r D_t^2u_j-\D D_tu_j+\na D_t\pi_j=-\D u\cdot\na u_j-2\sum_{i=1}^3\p_iu\cdot\na\p_i u_j+\na u\cdot\na\pi_j\eqdefa f_j.
\eeq
Moreover, due to $\dive u_j=0,$ we have
\beno
\dive D_tu_j=\sum_{i}^3\p_iu\cdot\na u_j^i.
\eeno
Then we get, by taking space divergence operator to \eqref{S3eq19}, that
\beno
\D D_t\pi_j=\sum_{i}^3\D \left(\p_iu\cdot\na u_j^i\right)-\dive\left(\r D_t^2u_j\right)+\dive f_j,
\eeno
from which, we infer
\beq \label{S3eq20}
\|\na D_t\pi_j\|_{L^2}\leq C\bigl(\bigl\|\na{\rm Tr}(\na u \na u_j)\bigr\|_{L^2}+\|\r D_t^2u_j\|_{L^2}+\|f_j\|_{L^2}\bigr),
\eeq
which together with \eqref{S3eq19} ensures that $\|\na^2 D_tu_j\|_{L^2}$ shares the same estimate.

Now by taking $L^2$ inner product of \eqref{S3eq19} with $D_t^2u_j,$ we achieve
\beno
\|\sqrt{\r}D_t^2u_j\|_{L^2}^2-\int_{\R^3}\D D_tu_j | D_t^2u_j\,dx+\int_{\R^3}\na D_t\pi_j |  D_t^2u_j\,dx=
\int_{\R^3} f_j |  D_t^2u_j\,dx.
\eeno
By using integration by parts, one has
\beno
-\int_{\R^3}\D D_tu_j | D_t^2u_j\,dx=\f12\f{d}{dt}\int_{\R^3}|\na D_tu_j|^2\,dx+\int_{\R^3} \na D_tu_j | [\na; D_t]D_tu_j\,dx,
\eeno
so that there holds
\beq \label{S3eq21}
\begin{split}
\f12\f{d}{dt}\|t\na D_tu_j(t)\|_{L^2}^2+\|t\sqrt{\r}&D_t^2u_j\|_{L^2}^2
 \leq \|\sqrt{t}\na D_tu_j\|_{L^2}^2+t^2\int_{\R^3} f_j |  D_t^2u_j\,dx\\
 &-t^2\int_{\R^3} \na D_tu_j | [\na; D_t]D_tu_j\,dx-t^2\int_{\R^3}\na D_t\pi_j |  D_t^2u_j\,dx.
\end{split}
\eeq

\noindent\underline{$\bullet$ \it  Estimate for $\int_0^t(t')^2\int_{\R^3}f_j |  D_t^2u_j\,dx\,dt'.$ }

Before proceeding,  we notice from \eqref{S3eq2b} that
\beno
\begin{split}
\bigl\|\sqrt{t}(\na^2u_j,\na\pi_j)\bigr\|_{L^2_t(L^6)}\leq & C\bigl(\|
\sqrt{t}\na \p_tu_j\|_{L^2_t(L^2)}+\bigl\|\sqrt{t}u\cdot\na u_j\|_{L^2_t(L^6)}\bigr)\\
\leq &C\bigl(\|\sqrt{t}\na\p_tu_j\|_{L^2_t(L^2)}+\|u\|_{L^2_t(L^\infty)}\|\sqrt{t}\na^2 u_j\|_{L^\infty_t(L^2)}\bigr),
\end{split}
\eeno
which together with \eqref{S3eq7}, \eqref{S3eq13} and \eqref{S3eq14} ensures that
\beq
\label{S3eq22}
\bigl\|\sqrt{t}(\na^2u_j,\na\pi_j)\bigr\|_{L^2_t(L^6)}\leq Cd_j2^{\f{j}2}\|u_0\|_{B^{\f12}}.
\eeq
Then we have
\beno
\begin{split}
\bigl\|t\na{\rm Tr}(\na u \na u_j)\bigr\|_{L^2_t(L^2)}\leq &\|\sqrt{t}\na^2u\|_{L^2_t(L^3)}\|\sqrt{t}\na u_j\|_{L^\infty_t(L^6)}+
\|\sqrt{t}\na u\|_{L^\infty_t(L^3)}\|\sqrt{t}\na^2u_j\|_{L^2_t(L^6)}\\
\leq &C\bigl(\|\sqrt{t}\na^2u\|_{L^2_t(L^3)}\|\sqrt{t}\na^2 u_j\|_{L^\infty_t(L^2)}\\
&\quad\qquad\qquad+
\|\sqrt{t}\na u\|_{L^\infty_t(B^{\f12})}\|\sqrt{t}\na^2u_j\|_{L^2_t(L^6)}\bigr),
\end{split}
\eeno
from which, \eqref{S3eq14}, \eqref{S3eq16} and \eqref{S3eq22}, we infer
\beq\label{S3eq23}
\bigl\|t\na{\rm Tr}(\na u \na u_j)\bigr\|_{L^2_t(L^2)}\leq Cd_j2^{\f{j}2}\|u_0\|_{B^{\f12}}.
\eeq
The same estimate holds for $\|t\D u\cdot\na u_j\|_{L^2_t(L^2)}+\sum_{i=1}^3\|t\p_iu\cdot\na\p_i u_j\|_{L^2_t(L^2)}.$

Furthermore, we deduce from \eqref{S3eq22} that
\beno
\begin{split}
\|t\na u\cdot\na\pi_j\|_{L^2_t(L^2)}\leq &\|\sqrt{t}\na u\|_{L^\infty_t(L^3)}\|\sqrt{t}\na\pi_j\|_{L^2_t(L^6)}\\
\leq &Cd_j2^{\f{j}2}\|\sqrt{t}\na u\|_{L^\infty_t(B^{\f12})}\|u_0\|_{B^{\f12}}.
\end{split}
\eeno
This along with \eqref{S3eq15} ensures that
\beq \label{S3eq24}
\|t f_j\|_{L^2_t(L^2)}\leq Cd_j2^{\f{j}2}\|u_0\|_{B^{\f12}},
\eeq
and thus
\beq\label{S3eq25}
\bigl|\int_0^t(t')^2\int_{\R^3}f_j |  D_t^2u_j\,dx\,dt'\bigr|\leq Cd_j^22^{j}\|u_0\|_{B^{\f12}}^2+\f16\|t\sqrt{\r}D_t^2u_j\|_{L^2_t(L^2)}^2.
\eeq

\noindent\underline{$\bullet$ \it  Estimate for $\int_0^t(t')^2\int_{\R^3}\na D_tu_j | [\na; D_t]D_tu_j\,dx\,dt'.$ }

It is easy to observe that
\beno
\begin{split}
\bigl|\int_{\R^3}\na D_tu_j | [\na; D_t]D_tu_j\,dx\bigr|\leq &\|\na u\|_{L^3}\|\na D_t u_j\|_{L^3}^2\\
\leq &C\|\na u\|_{L^3}\|\na D_t u_j\|_{L^2}\|\na^2 D_t u_j\|_{L^2}.
\end{split}
\eeno
Yet we deduce from \eqref{S3eq20}, \eqref{S3eq23} and \eqref{S3eq24} that
\beq \label{S3eq26}
\begin{split}
\|t\na^2&D_tu_j\|_{L^2_t(L^2)}+\|t\na D_t\pi_j\|_{L^2_t(L^2)}\\
\leq & C\bigl(\bigl\|t\na {\rm Tr}(\na u \na u_j)\bigr\|_{L^2_t(L^2)}+\|t\r D_t^2u_j\|_{L^2_t(L^2)}+\|tf_j\|_{L^2_t(L^2)}\bigr)\\
\leq &C\bigl(d_j2^{\f{j}2}\|u_0\|_{B^{\f12}}+\|t\sqrt{\r} D_t^2u_j\|_{L^2_t(L^2)}\bigr).
\end{split}
\eeq
Then we obtain
\beq \label{S3eq28}
\begin{split}
\bigl|\int_0^t(t')^2\int_{\R^3}\na D_tu_j |& [\na; D_t]D_tu_j\,dx\,dt'\bigr|\leq \f16\|t\sqrt{\r} D_t^2u_j\|_{L^2_t(L^2)}^2\\
&+C\Bigl(d_j^22^{j}\|u_0\|_{B^{\f12}}^2+\int_0^t
\|\na u\|_{B^{\f12}}^2\|t\na D_t u_j\|_{L^2}^2\,dt'\Bigr).
\end{split}
\eeq

\noindent\underline{$\bullet$ \it  Estimate for $\int_0^t(t')^2\int_{\R^3}\na D_t\pi_j |  D_t^2u_j\,dx\,dt'.$ }

We first get, by using integration by parts, that
\beno
\begin{split}
\int_{\R^3}\na D_t\pi_j |  D_t^2u_j\,dx=&-\int_{R^3}D_t\pi_j | \p_t\dive D_tu_j\,dx+\int_{\R^3}\na D_t\pi_j | u\cdot\na D_tu_j\,dx\\
=&-\int_{R^3}D_t\pi_j | \p_t{\rm Tr}(\na u\na u_j )\,dx +\int_{\R^3}\na D_t\pi_j | u\cdot\na D_tu_j\,dx\\
=&\int_{R^3}\na D_t\pi_j | \p_tu\cdot\na u_j\,dx+\int_{R^3}\na D_t\pi_j | \p_tu_j\cdot\na u\,dx\\
&+\int_{\R^3}\na D_t\pi_j | u\cdot\na D_tu_j\,dx.
\end{split}
\eeno
Next we handle term by term above. We first get, by applying product laws in Besov spaces, \eqref{S3eq0}, that
\beno
\begin{split}
\bigl|\int_0^t(t')^2\int_{R^3}\na D_t\pi_j | \p_tu\cdot\na u_j\,dx\,dt'\bigr|\leq &\int_0^t(t')^2\|\na D_t \pi_j\|_{L^2}\|\p_tu\cdot\na u_j\|_{L^2}\,dt'\\
\leq &\|t\na D_t\pi_j\|_{L^2_t(L^2)}\|\sqrt{t}u_t\|_{L^2_t(B^{\f12})}\|\sqrt{t}\na^2 u_j\|_{L^\infty_t(L^2)},
\end{split}
\eeno
which together \eqref{S3eq15}, \eqref{S3eq14} and \eqref{S3eq26} ensures that
\beno
\bigl|\int_0^t(t')^2\int_{R^3}\na D_t\pi_j | \p_tu\cdot\na u_j\,dx\,dt'\bigr|\leq
\f1{18}\|t\sqrt{\r} D_t^2u_j\|_{L^2_t(L^2)}^2+Cd_j^22^{j}\|u_0\|_{B^{\f12}}^2.
\eeno
Along the same line,  we get, by applying \eqref{S3eq15}, \eqref{S3eq13} and \eqref{S3eq26}, that
\beno
\begin{split}
\bigl|\int_0^t(t')^2\int_{R^3}\na D_t\pi_j | \p_tu_j\cdot\na u\,dx\,dt'\bigr|\leq &\|t\na D_t\pi_j\|_{L^2_t(L^2)}\|\sqrt{t}\na\p_tu_j\|_{L^2_t(L^2)}
\|\sqrt{t}\na u\|_{L^\infty_t(B^{\f12})}\\
\leq &
\f1{18}\|t\sqrt{\r} D_t^2u_j\|_{L^2_t(L^2)}^2+Cd_j^22^{j}\|u_0\|_{B^{\f12}}^2,
\end{split}
\eeno
and
\beno
\begin{split}
\bigl|\int_0^t&(t')^2\int_{\R^3}\na D_t\pi_j | u\cdot\na D_tu_j\,dx\,dt'\\
\leq & \int_0^t\|u\|_{L^\infty}\|t\na D_tu_j\|_{L^2}\|t\na D_t\pi_j\|_{L^2}\,dt'\\
\leq &
\f1{18}\|t\sqrt{\r} D_t^2u_j\|_{L^2_t(L^2)}^2+Cd_j^22^{j}\|u_0\|_{B^{\f12}}^2+\int_0^t\|u\|_{L^\infty}^2\|t\na D_tu_j\|_{L^2}^2\,dt'.
\end{split}
\eeno
As a result, it comes out
\beq
\label{S3eq29}
\begin{split}
\bigl|\int_0^t(t')^2\int_{\R^3}\na D_t\pi_j |  D_t^2u_j\,dx\,dt'\bigr|\leq&\f1{6}\|t\sqrt{\r} D_t^2u_j\|_{L^2_t(L^2)}^2+Cd_j^22^{j}\|u_0\|_{B^{\f12}}^2\\
&+C\int_0^t\|u\|_{L^\infty}^2\|t\na D_tu_j\|_{L^2}^2\,dt'.
\end{split}
\eeq

Integrating \eqref{S3eq21} over $[0,t]$ and then inserting \eqref{S3eq25}, \eqref{S3eq28} and \eqref{S3eq29} into the resulting inequality, we achieve
\beno
\begin{split}
\|t\na D_tu_j\|_{L^\infty_t(L^2)}^2+\|t\sqrt{\r}D_t^2u_j\|_{L^2_t(L^2)}^2
 \leq&Cd_j^22^{j}\|u_0\|_{B^{\f12}}^2+\|\sqrt{t}\na D_tu_j\|_{L^2_t(L^2)}^2\\
&+C\int_0^t\bigl(\|\na u\|_{B^{\f12}}^2+\|u\|_{L^\infty}^2\bigr)\|t\na D_tu_j\|_{L^2}^2\,dt'.
\end{split}
\eeno
Note from \eqref{S3eq0}, \eqref{S3eq13} and \eqref{S3eq14} that
\beno
\begin{split}
\|\sqrt{t}\na D_tu_j\|_{L^2_t(L^2)}\leq& \|\sqrt{t}\na \p_tu_j\|_{L^2_t(L^2)}+\|\sqrt{t}u\cdot\na u_j\|_{L^2_t(\dH^1)}\\
\leq &C\bigl(\|\sqrt{t}\na \p_tu_j\|_{L^2_t(L^2)}+\|u\|_{L^2_t(B^{\f32})}\|\sqrt{t}\na u_j\|_{L^\infty_t(\dH^1)}\\
\leq &Cd_j2^{\f{j}2}\|u_0\|_{B^{\f12}}.
\end{split}
\eeno
Then applying Gronwall's inequality and using \eqref{S3eq7} gives rise to
\beq
\label{S3eq30}
\begin{split}
\|t\na D_tu_j\|_{L^\infty_t(L^2)}^2+\|t D_t^2u_j\|_{L^2_t(L^2)}^2 \leq
&Cd_j^22^{j}\|u_0\|_{B^{\f12}}^2\exp\bigl(C\|u\|_{L^2_t(B^{\f32})}^2\bigr)\\
 \leq & Cd_j^22^{j}\|u_0\|_{B^{\f12}}^2, \end{split} \eeq
 which together with \eqref{S3eq26} ensures that
 \beq\label{S3eq40}
 \|t\na^2 D_t u_j\|_{L^2_t(L^2)}\leq  Cd_j2^{\f{j}2}\|u_0\|_{B^{\f12}}.
 \eeq

 Thanks to  \eqref{S3eq32} and \eqref{S3eq30}, \eqref{S3eq40},  we get, by a similar derivation of \eqref{S3eq7} that \eqref{S3eq33} holds for $t>0.$
\end{proof}

\begin{rmk}
We deduce from \eqref{S3eq9} that
\beno
\bigl\|\sqrt{t}(\na u_j, \pi_j)\bigr\|_{L^2_t(L^6)}\leq Cd_j2^{-\f{j}2}\|u_0\|_{B^{\f12}}\quad\mbox{for any }\ t>0,
\eeno
which together with \eqref{S3eq22} ensures that
\beq \label{S3eq38}
\bigl\|\sqrt{t}(\na u, \pi)\bigr\|_{L^2_t(\dB^{\f12}_{6,1})}\leq C\|u_0\|_{B^{\f12}}\quad\mbox{for any }\ t>0.
\eeq
\end{rmk}

\begin{rmk} We remark that the advantage of applying the material derivative, $D_t,$ instead of $\p_t$ to the momentum equation of \eqref{S1eq1}
is that $D_t\r=0.$  Indeed the energy estimate for $D_tu$ was first performed by Hoff in \cite{Hoff} for the isentropic compressible Navier-Stokes system.
For the inhomogeneous incompressible case, similar estimate was first obtained by Liao and the author in \cite{LZ2}.
\end{rmk}

\medskip

\setcounter{equation}{0}
\section{The proof of Theorem \ref{thm2} }\label{Sect4}

The goal of this section is to present the proof of Propositions \ref{S4prop1} to \ref{S4prop3}.

\begin{lem}\label{S4lem1}
{\sl  Let $(a,v,\na\pi)$ be a smooth enough solution of \eqref{S4eq2} on $[0,T^\ast[.$ Let $(v_j,\na\pi_j)$
be determined by \eqref{S4eq3}. Then for $t<T^\ast,$ one has
\beq \label{S4eq5}
 \|v_j\|_{L^\infty_t(L^2)}^2+\|\na v_j\|_{L^2_t(L^2)}^2\leq
  Cd_j^22^{-j}\bigl(\|u_0^\h\|_{B^{\f12}}^2+\|a_0\|_{L^\infty}^2\bigr)\exp\bigl(C\|u_0\|_{B^{\f12}}^2\bigr).
 \eeq}
 \end{lem}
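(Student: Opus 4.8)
The plan is to run a weighted $L^2$ energy estimate on the localized system \eqref{S4eq3}, taking advantage of the transport structure of $\r=\f1{1+a}$, and then to close it with a Gronwall argument in which all factors built out of the \emph{full} data $u_0$ are pushed into the exponential, while only the \emph{horizontal} data $u_0^\h$ (and $a_0$) survive as linear prefactors. First I record the preliminary facts: since $\eta$ in \eqref{small1} is small, $\|a_0\|_{L^\infty}\leq\e_0$, so $1+a$ stays in a fixed interval around $1$ and $\r$ is bounded above and below by absolute constants; moreover $a$ (hence $\r$) satisfies $\p_t\r+u\cdot\na\r=0$ with $u=v+e^{t\D}u_0$ divergence free. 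Pairing the momentum equation of \eqref{S4eq3} in $L^2$ with $v_j$, the term $\int\r\p_tv_j\cdot v_j+\int\r u\cdot\na v_j\cdot v_j$ collapses to $\f12\f{d}{dt}\|\sqrt\r v_j\|_{L^2}^2$, the pressure drops by $\dive v_j=0$, and the Laplacian gives $\|\na v_j\|_{L^2}^2$. Since $v_j|_{t=0}=0$, this yields
\beq\nonumber
\f12\f{d}{dt}\|\sqrt\r v_j\|_{L^2}^2+\|\na v_j\|_{L^2}^2 = I_1+I_2+I_3,
\eeq
with $I_1=-\int\r(v_j\cdot\na e^{t\D}u_0)\cdot v_j$, $I_2=\int\r F_j\cdot v_j$, and $I_3=\int\r a\,\D e^{t\D}\D_j u_0\cdot v_j$.

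Next I estimate $I_1$ and $I_2$. For $I_1$, Gagliardo--Nirenberg ($\|v_j\|_{L^3}^2\lesssim\|v_j\|_{L^2}\|\na v_j\|_{L^2}$ in $\R^3$) and $\r\leq C$ give $|I_1|\leq\f14\|\na v_j\|_{L^2}^2+C\|\na e^{t\D}u_0\|_{L^3}^2\|v_j\|_{L^2}^2$, where the coefficient is in $L^1_t$ with $\int_0^\infty\|\na e^{t\D}u_0\|_{L^3}^2\,dt\lesssim\|u_0\|_{B^{\f12}}^2$ (via $\na e^{t\D}u_0\in L^2_t(\dH^{\f12})\hookrightarrow L^2_t(L^3)$). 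The key structural observation is that in $F_j$ every frequency-$j$ localized factor is a piece of $u_0^\h$. For $F_j^1=-e^{t\D}u_0\cdot\na e^{t\D}\D_ju_0^\h$ I integrate by parts using $\dive e^{t\D}u_0=0$, bounding $|\int F_j^1\cdot v_j|\leq\f14\|\na v_j\|_{L^2}^2+C\|e^{t\D}u_0\|_{L^\infty}^2\|e^{t\D}\D_ju_0^\h\|_{L^2}^2$; the second factor carries the small prefactor $\|\D_ju_0^\h\|_{L^2}\lesssim d_j2^{-j/2}\|u_0^\h\|_{B^{\f12}}$, while $\|e^{t\D}u_0\|_{L^\infty}^2\in L^1_t$ (bounded by $\|u_0\|_{B^{\f12}}^2$ through $\dB^{\f32}_{2,1}\hookrightarrow L^\infty$). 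The first part of $F_j^2$ is handled identically with $\|\na e^{t\D}u_0^3\|_{L^3}$ as coefficient, and for the second part $e^{t\D}u_0^3\,e^{t\D}\D_j\dive_\h u_0^\h$ a Young split sends $\f12\|e^{t\D}u_0^3\|_{L^\infty}^2\|v_j\|_{L^2}^2$ into the Gronwall coefficient and leaves the clean forcing $\f12\|e^{t\D}\D_j\dive_\h u_0^\h\|_{L^2}^2$, whose $L^1_t$ norm is $\lesssim d_j^22^{-j}\|u_0^\h\|_{B^{\f12}}^2$ by the heat decay $\|e^{t\D}\D_j\dive_\h u_0^\h\|_{L^2}\lesssim 2^{j}e^{-ct2^{2j}}\|\D_ju_0^\h\|_{L^2}$.

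The main obstacle is the density term $I_3$, because it carries two derivatives on the data ($\D e^{t\D}\D_j u_0\sim 2^{2j}$) and $a$ is merely $L^\infty$, so neither can I absorb it into the diffusion $\|\na v_j\|_{L^2}^2$ nor integrate by parts onto $v_j$ (that would produce the uncontrolled $\na(\r a)$). Instead I keep it as a genuine forcing, $|I_3|\leq S_j(t)\|v_j\|_{L^2}$ with $S_j(t)=C\|a_0\|_{L^\infty}\|\D e^{t\D}\D_j u_0\|_{L^2}$, and exploit that the \emph{time-integral} of the heat factor is good: $\|\D e^{t\D}\D_j u_0\|_{L^1_t(L^2)}\lesssim\|\D_ju_0\|_{L^2}\lesssim d_j2^{-j/2}\|u_0\|_{B^{\f12}}$ (from $\int_0^\infty 2^{2j}e^{-ct2^{2j}}dt=1/c$), so that $\|S_j\|_{L^1_t}\lesssim\|a_0\|_{L^\infty}d_j2^{-j/2}\|u_0\|_{B^{\f12}}$.

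Finally I collect everything as $\f12\f{d}{dt}y+\f14\|\na v_j\|_{L^2}^2\leq g(t)y+\Phi_j(t)+S_j(t)y^{1/2}$, where $y=\|\sqrt\r v_j\|_{L^2}^2\sim\|v_j\|_{L^2}^2$, $\int_0^\infty g\lesssim\|u_0\|_{B^{\f12}}^2$, and $\int_0^\infty\Phi_j\lesssim d_j^22^{-j}\|u_0^\h\|_{B^{\f12}}^2(1+\|u_0\|_{B^{\f12}}^2)$. The square-root forcing $S_jy^{1/2}$ prevents a bare Gronwall, so I multiply by the integrating factor $e^{-2\int_0^tg}$, set $E=ye^{-2\int g}$, and on $\sup_{[0,t]}E=:A$ obtain the quadratic inequality $A\leq C(\|\Phi_j\|_{L^1_t}+\|S_j\|_{L^1_t}A^{1/2})$, which forces $A\lesssim\|S_j\|_{L^1_t}^2+\|\Phi_j\|_{L^1_t}$. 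Undoing the integrating factor gives $\|v_j\|_{L^\infty_t(L^2)}^2\lesssim(\|S_j\|_{L^1_t}^2+\|\Phi_j\|_{L^1_t})\exp(C\|u_0\|_{B^{\f12}}^2)$, and feeding this back into the integrated differential inequality controls $\|\na v_j\|_{L^2_t(L^2)}^2$ by the same quantity. Substituting the bounds on $\|S_j\|_{L^1_t}$ and $\|\Phi_j\|_{L^1_t}$ produces $d_j^22^{-j}\bigl(\|a_0\|_{L^\infty}^2\|u_0\|_{B^{\f12}}^2+\|u_0^\h\|_{B^{\f12}}^2(1+\|u_0\|_{B^{\f12}}^2)\bigr)\exp(C\|u_0\|_{B^{\f12}}^2)$; absorbing the polynomial factors $\|u_0\|_{B^{\f12}}^2$ into a slightly larger exponential (using $xe^{Cx}\lesssim e^{C'x}$) yields exactly \eqref{S4eq5}.
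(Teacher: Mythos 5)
Your proposal is correct and follows essentially the same route as the paper: the same localized energy estimate on \eqref{S4eq3} with the transport structure of $\r$, the same key device of keeping the density term $\int\r a\,\D e^{t\D}\D_ju_0\cdot v_j$ as a forcing that is linear in $\|v_j\|_{L^2}$ and controlled through the $L^1_t(L^2)$ heat-kernel bound $\|\D e^{t\D}\D_ju_0\|_{L^1_t(L^2)}\lesssim d_j2^{-j/2}\|u_0\|_{B^{\f12}}$, and the same Gronwall--Young closure (the paper absorbs $\f12\|v_j\|_{L^\infty_t(L^2)}^2$ directly, where you solve the quadratic inequality in $A^{1/2}$ --- your version is if anything slightly more careful) followed by absorbing the polynomial factors $\|u_0\|_{B^{\f12}}^2$ into the exponential. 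The only cosmetic deviations are that you estimate with $L^3$/Gagliardo--Nirenberg where the paper uses Besov product laws, and you integrate by parts on $F_j^1$ (legitimately, since $\dive e^{t\D}u_0=0$) where the paper estimates it directly against $\|\na e^{t\D}\D_ju_0^\h\|_{L^2}$; both yield the same $d_j^22^{-j}\|u_0^\h\|_{B^{\f12}}^2$ scale.
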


\begin{proof}
We first get, by taking $L^2$ inner product of the momentum equation of \eqref{S4eq3} with $ v_j$ and using the transport equation of \eqref{S4eq2}, that
\beq \label{S4eq4}
\begin{split}
\f12\f{d}{dt}&\|\sqrt{\r}v_j(t)\|_{L^2}^2+\|\na v_j\|_{L^2}^2=-\int_{\R^3}\r v_j\cdot\na e^{t\D}u_0 | v_j\,dx\\
&-\int_{\R^3}\r e^{t\D}u_0\cdot\na e^{t\D}\D_ju_0^\h | v_j^\h\,dx+\int_{\R^3}\r a \D e^{t\D}\D_j u_0 | v_j\,dx\\
&-\int_{\R^3}\r\bigl(e^{t\D}\D_ju_0^\h\cdot\na_\h e^{t\D}u_0^3-e^{t\D}u_0^3e^{t\D}\D_j\dive_\h u_0^\h\bigr)
 | v_j^3\,dx.
 \end{split}
 \eeq
Next let us estimate term by term above.

\noindent\underline{$\bullet$ \it  Estimate for $\int_{\R^3}\r v_j\cdot\na e^{t\D}u_0 | v_j\,dx.$ }

\beno
\begin{split}
\bigl|\int_{\R^3}\r v_j\cdot\na e^{t\D}u_0 | v_j\,dx\bigr|\leq &\|\na e^{t\D}u_0\|_{L^3}\|v_j\|_{L^3}^2\\
\leq &C\|\na e^{t\D}u_0\|_{B^{\f12}}\|v_j\|_{L^2}\|\na v_j\|_{L^2}\\
\leq &C\|\na e^{t\D}u_0\|_{B^{\f12}}^2\|v_j\|_{L^2}^2+\f12\|\na v_j\|_{L^2}^2.
\end{split}
\eeno

\noindent\underline{$\bullet$ \it  Estimate for $\int_{\R^3}\r e^{t\D}u_0\cdot\na e^{t\D}\D_ju_0^\h | v_j^\h\,dx.$ }

\beno
\begin{split}
\bigl|\int_{\R^3}\r e^{t\D}u_0\cdot\na e^{t\D}\D_ju_0^\h | v_j^\h\,dx\big|\leq & C\|e^{t\D}u_0\|_{L^\infty}\|\na e^{t\D}\D_ju_0^\h\|_{L^2}\|v_j\|_{L^2}\\
\leq &C\|e^{t\D}u_0\|_{L^\infty}^2\|v_j\|_{L^2}^2+\f12\|\na e^{t\D}\D_ju_0^\h\|_{L^2}^2.
\end{split}
\eeno

\noindent\underline{$\bullet$ \it  Estimate for $\int_{\R^3}\r a \D e^{t\D}\D_j u_0 | v_j\,dx.$ }

\beno\begin{split}
\bigl|\int_{\R^3}\r a \D e^{t\D}\D_j u_0 | v_j\,dx|\leq& C\|a\|_{L^\infty}\|\D e^{t\D}\D_j u_0\|_{L^2}\|v_j\|_{L^2}\\
\leq& C\|a_0\|_{L^\infty}\|\D e^{t\D}\D_j u_0\|_{L^2}\|v_j\|_{L^2}.
\end{split}
\eeno

\noindent\underline{$\bullet$ \it  Estimate for $\int_{\R^3}\r\bigl(e^{t\D}\D_j u_0^\h\cdot\na_\h e^{t\D}u_0^3-e^{t\D}u_0^3e^{t\D}\D_j\dive_\h u_0^\h\bigr)
 | v_j^3\,dx.$ }

 It follows from the law of product \eqref{S3eq0} that
 \beno
\begin{split}
\bigl|&\int_{\R^3}\r\bigl(-e^{t\D}\D_ju_0^\h\cdot\na_\h e^{t\D}u_0^3-e^{t\D}u_0^3e^{t\D}\D_j\dive_\h u_0^\h\bigr)
 | v_j^3\,dx\bigr|\\
 &\leq \bigl(\bigl\|\na_\h e^{t\D} u_0^3\bigr\|_{B^{\f12}}+\bigl\|e^{t\D}u_0^3\|_{L^\infty}\bigr)\bigl\|\na e^{t\D}\D_j u_0^\h\bigr\|_{L^2}\|v_j\|_{L^2}\\
 &\leq C\bigl\|e^{t\D}u_0^3\|_{B^{\f32}}^2\|v_j\|_{L^2}^2+\f12\bigl\|\na e^{t\D}\D_j u_0^\h\bigr\|_{L^2}.
 \end{split}
 \eeno

 Inserting the above estimates into \eqref{S4eq4} and then applying Gronwall's and  Young's inequalities gives rise to
 \beno
 \begin{split}
 \|v_j&\|_{L^\infty_t(L^2)}^2+\|\na v_j\|_{L^2_t(L^2)}^2\leq  C\exp\bigl(C\bigl\|e^{t\D}u_0\|_{L^2_t(B^{\f32})}^2\bigr)\\
 &\times \Bigl(\bigl\|\na e^{t\D}\D_j u_0^\h\bigr\|_{L^2_t(L^2)}^2+\|a_0\|_{L^\infty}\|\D e^{t\D}\D_j u_0\|_{L^1_t(L^2)}\|v_j\|_{L^\infty_t(L^2)}\Bigr) \\
 \leq & \f12\|v_j\|_{L^\infty_t(L^2)}^2+ C\exp\bigl(C\bigl\|e^{t\D}u_0\|_{L^2_t(B^{\f32})}^2\bigr)\\
 &\times \Bigl(\bigl\|\na e^{t\D}\D_j u_0^\h\bigr\|_{L^2_t(L^2)}^2+\|a_0\|_{L^\infty}^2\|\D e^{t\D}\D_j u_0\|_{L^1_t(L^2)}^2\Bigr),
 \end{split}
 \eeno
 which together with Lemma \ref{S4lem0} implies that
 \beno
 \begin{split}
 \|v_j\|_{L^\infty_t(L^2)}^2+\|\na v_j\|_{L^2_t(L^2)}^2\leq &Cd_j^22^{-j}\bigl(\|u_0^\h\|_{B^{\f12}}^2+\|a_0\|_{L^\infty}^2\|u_0\|_{B^{\f12}}^2\bigr)\exp\bigl(C\|u_0\|_{B^{\f12}}^2\bigr)\\
 \leq & Cd_j^22^{-j}\bigl(\|u_0^\h\|_{B^{\f12}}^2+\|a_0\|_{L^\infty}^2\bigr)\exp\bigl(C\|u_0\|_{B^{\f12}}^2\bigr).
 \end{split}
 \eeno
 This proves \eqref{S4eq5}.
 \end{proof}

\begin{lem}\label{S4lem2}
{\sl Under the assumptions of Lemma \ref{S4lem1},  we denote
\beq \label{S4eq7}
T_2^\star\eqdefa \bigl\{\ t< T^\ast:\ \|v\|_{L^2_t(L^\infty)}\leq 1.\quad\bigr\}.
\eeq Then for $t<T^\star_2$, one has
\beq\label{S4eq8}
\begin{split}
\|\na v_j\|_{L^\infty_t(L^2)}^2+&\bigl\|(\p_tv_j,\na^2v_j,\na\pi_j)\bigr\|_{L^2_t(L^2)}^2
\leq C d_j^22^j\bigl(\|u_0^\h\|_{B^{\f12}}^2+\|a_0\|_{L^\infty}^2\bigr)\exp\bigl(C\|u_0\|_{B^{\f12}}^2\bigr).
\end{split}
\eeq
}
 \end{lem}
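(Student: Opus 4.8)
The plan is to carry out the second energy estimate of Proposition~\ref{S3prop1} (the argument producing \eqref{S3eq6}), now transplanted to the perturbed system \eqref{S4eq3}. Concretely, I would take the $L^2(\R^3)$ inner product of the momentum equation of \eqref{S4eq3} with $\p_tv_j$. Since $\dive v_j=0$ gives $\dive\p_tv_j=0$, the pressure term $\int_{\R^3}\na\pi_j | \p_tv_j\,dx$ vanishes, the diffusion term yields $\f12\f{d}{dt}\|\na v_j\|_{L^2}^2$, and the leading term gives $\|\sqrt{\r}\p_tv_j\|_{L^2}^2$, resulting in
\beno
\f12\f{d}{dt}\|\na v_j(t)\|_{L^2}^2+\|\sqrt{\r}\p_tv_j\|_{L^2}^2
&=&-\int_{\R^3}\r\bigl(v+e^{t\D}u_0\bigr)\cdot\na v_j | \p_tv_j\,dx
-\int_{\R^3}\r v_j\cdot\na e^{t\D}u_0 | \p_tv_j\,dx\\
&&+\int_{\R^3}\r\bigl(F_j+a\D e^{t\D}\D_ju_0\bigr) | \p_tv_j\,dx.
\eeno

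For each term on the right I would use that $\r$ is bounded from above and below by \eqref{S1eq14}, together with Young's inequality, to absorb a fraction of $\|\sqrt{\r}\p_tv_j\|_{L^2}^2$ into the left-hand side. The advection term is bounded by $\|v+e^{t\D}u_0\|_{L^\infty}\|\na v_j\|_{L^2}\|\sqrt{\r}\p_tv_j\|_{L^2}$; the term carrying $\na e^{t\D}u_0$ by $\|\na e^{t\D}u_0\|_{B^{\f12}}\|\na v_j\|_{L^2}\|\sqrt{\r}\p_tv_j\|_{L^2}$, via $B^{\f12}\hookrightarrow L^3$ and $\|v_j\|_{L^6}\lesssim\|\na v_j\|_{L^2}$; and the source terms by $\bigl(\|F_j\|_{L^2}+\|a_0\|_{L^\infty}\|\D e^{t\D}\D_ju_0\|_{L^2}\bigr)\|\sqrt{\r}\p_tv_j\|_{L^2}$, where $\|a(t)\|_{L^\infty}=\|a_0\|_{L^\infty}$ comes from the transport equation. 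After absorption this produces
\beno
\f{d}{dt}\|\na v_j\|_{L^2}^2+\|\sqrt{\r}\p_tv_j\|_{L^2}^2
&\leq& C\bigl(\|v+e^{t\D}u_0\|_{L^\infty}^2+\|\na e^{t\D}u_0\|_{B^{\f12}}^2\bigr)\|\na v_j\|_{L^2}^2\\
&&+C\bigl(\|F_j\|_{L^2}^2+\|a_0\|_{L^\infty}^2\|\D e^{t\D}\D_ju_0\|_{L^2}^2\bigr).
\eeno

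Since $\na v_j|_{t=0}=0$, Gronwall's inequality then bounds $\|\na v_j\|_{L^\infty_t(L^2)}^2+\|\sqrt{\r}\p_tv_j\|_{L^2_t(L^2)}^2$ by the time-integrated source multiplied by $\exp\bigl(C\int_0^t(\|v+e^{t\D}u_0\|_{L^\infty}^2+\|\na e^{t\D}u_0\|_{B^{\f12}}^2)\,dt'\bigr)$. The definition \eqref{S4eq7} of $T^\star_2$ gives $\|v\|_{L^2_t(L^\infty)}\le1$, while $\|e^{t\D}u_0\|_{L^2_t(L^\infty)}+\|\na e^{t\D}u_0\|_{L^2_t(B^{\f12})}\lesssim\|u_0\|_{B^{\f12}}$, so the exponent is $\leq C(1+\|u_0\|_{B^{\f12}}^2)$ and yields the factor $\exp(C\|u_0\|_{B^{\f12}}^2)$. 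Lemma~\ref{S4lem0} then supplies the localized bounds $\|\D e^{t\D}\D_ju_0\|_{L^2_t(L^2)}\lesssim d_j2^{\f{j}2}\|u_0\|_{B^{\f12}}$ and, handling each piece of $F_j$ exactly as in Lemma~\ref{S4lem1}, $\|F_j\|_{L^2_t(L^2)}\lesssim d_j2^{\f{j}2}\bigl(\|u_0^\h\|_{B^{\f12}}+\|a_0\|_{L^\infty}\|u_0\|_{B^{\f12}}\bigr)$, which combine to the claimed right-hand side. Finally, to recover $\|\na^2v_j\|_{L^2_t(L^2)}+\|\na\pi_j\|_{L^2_t(L^2)}$ I would read \eqref{S4eq3} as the stationary Stokes system $-\Delta v_j+\na\pi_j=-\r\p_tv_j-\r(v+e^{t\D}u_0)\cdot\na v_j-\r v_j\cdot\na e^{t\D}u_0+\r F_j+\r a\D e^{t\D}\D_ju_0$ and apply the standard elliptic estimate, controlling the advection term in $L^2_t(L^2)$ by $\|v+e^{t\D}u_0\|_{L^2_t(L^\infty)}\|\na v_j\|_{L^\infty_t(L^2)}$.

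The step I expect to require the most care is the treatment of the advection terms. Unlike Proposition~\ref{S3prop1}, where $u\cdot\na u_j$ was estimated through $\|\na^2u_j\|_{L^2}$ and absorbed by the $\dH^{\f12}$-smallness built into $T^\star_1$, here I must arrange that $\na v_j$ enters \emph{linearly} against a coefficient that is merely square-integrable in time, namely $\|v+e^{t\D}u_0\|_{L^\infty}\in L^2_t$, furnished by the $T^\star_2$ constraint \eqref{S4eq7} and the smallness of $u_0$. This avoids any pointwise-in-time smallness and lets Gronwall close without absorption into $\na^2v_j$. The remaining subtlety is purely the bookkeeping of the source $F_j$, each of whose several pieces must be reduced through the heat-smoothing estimates of Lemma~\ref{S4lem0} to the single geometric factor $d_j2^{\f{j}2}$ carrying the correct power $2^{j}$ and the combination $\|u_0^\h\|_{B^{\f12}}^2+\|a_0\|_{L^\infty}^2$.
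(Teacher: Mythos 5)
Your proposal follows essentially the same route as the paper: testing the momentum equation of \eqref{S4eq3} with $\p_tv_j$, absorbing fractions of $\|\sqrt{\r}\p_tv_j\|_{L^2}^2$ via Young's inequality, bounding the advection linearly in $\na v_j$ against the $L^2_t(L^\infty)$ coefficient supplied by \eqref{S4eq7} and Lemma\refer{S4lem4}, closing with Gronwall and the heat-semigroup bounds of Lemma\refer{S4lem0}, and recovering $(\na^2v_j,\na\pi_j)$ from the stationary Stokes estimate exactly as in \eqref{S4eq9a}. The argument is correct, and your identification of the linear-in-$\na v_j$ structure as the key difference from Proposition\refer{S3prop1} matches the paper's mechanism precisely.
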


\begin{proof}

 By taking $L^2$ inner product of the momentum equation of \eqref{S4eq3} with $\p_tv_j,$ we obtain
\beq \label{S4eq6}
\begin{split}
\f12\f{d}{dt}&\|\na v_j(t)\|_{L^2}^2+\|\sqrt{\r}\p_tv_j\|_{L^2}^2\\
=&-\int_{\R^3}\r \bigl(v+e^{t\D}u_0\bigr)\cdot\na v_j | \p_tv_j\,dx-\int_{\R^3}\r v_j\cdot\na e^{t\D}u_0 | \p_tv_j\,dx\\
&+\int_{\R^3}\r a \D e^{t\D}\D_j u_0 | \p_tv_j\,dx-\int_{\R^3}\r e^{t\D}u_0\cdot\na e^{t\D}\D_ju_0^\h | \p_tv_j^\h\,dx\\
&-\int_{\R^3}\r\bigl(e^{t\D}\D_ju_0^\h\cdot\na_\h e^{t\D}u_0^3-e^{t\D}u_0^3e^{t\D}\D_j\dive_\h u_0^\h\bigr)
 | \p_tv_j^3\,dx.
 \end{split}
 \eeq

We now deal with term by term above.

\noindent\underline{$\bullet$ \it  Estimate for $\int_{\R^3}\r \bigl(v+e^{t\D}u_0\bigr)\cdot\na v_j | \p_tv_j\,dx.$ }

\beno
\begin{split}
\bigl|\int_{\R^3}\r \bigl(v+e^{t\D}u_0\bigr)\cdot\na v_j | \p_tv_j\,dx\bigr|
\leq &C\bigl(\|v\|_{L^\infty}+\bigl\|e^{t\D}u_0\|_{L^\infty}\bigr)\|\na v_j\|_{L^2}\|\r\p_tv_j\|_{L^2}\\
\leq &C\bigl(\|v\|_{L^\infty}^2+\bigl\|e^{t\D}u_0\|_{B^{\f32}}^2\bigr)\|\na v_j\|_{L^2}^2+\f1{10}\|\sqrt{\r}\p_tv_j\|_{L^2}^2.
\end{split}
\eeno

\noindent\underline{$\bullet$ \it  Estimate for $\int_{\R^3}\r v_j\cdot\na e^{t\D}u_0 | \p_t v_j\,dx.$ }

It follows from the law of product, \eqref{S3eq0}, that
\beno
\begin{split}
\bigl|\int_{\R^3}\r v_j\cdot\na e^{t\D}u_0 | \p_t v_j\,dx\bigr|\leq &\|\na v_j\|_{L^2}\|\na e^{t\D}u_0\|_{B^{\f12}}\|\sqrt{\r}\p_tv_j\|_{L^2}\\
\leq &C\|e^{t\D}u_0\|_{B^{\f32}}^2\|\na v_j\|_{L^2}^2+\f1{10}\|\sqrt{\r}\p_tv_j\|_{L^2}^2.
\end{split}
\eeno

\noindent\underline{$\bullet$ \it  Estimate for $\int_{\R^3}\r a \D e^{t\D}\D_j u_0 | \p_tv_j\,dx.$ }

\beno
\begin{split}
\bigl|\int_{\R^3}\r a \D e^{t\D}\D_j u_0 | \p_tv_j\,dx|\leq & C\|a\|_{L^\infty}\|\D e^{t\D}\D_j u_0\|_{L^2}\|\sqrt{\r}\p_tv_j\|_{L^2}\\
\leq &C\|a_0\|_{L^\infty}^2\|\D e^{t\D}\D_j u_0\|_{L^2}^2+\f1{10}\|\sqrt{\r}\p_tv_j\|_{L^2}.
\end{split}
\eeno

\noindent\underline{$\bullet$ \it  Estimate for $\int_{\R^3}\r e^{t\D}u_0\cdot\na e^{t\D}\D_ju_0^\h | \p_tv_j^\h\,dx.$ }

\beno
\begin{split}
\bigl|\int_{\R^3}\r e^{t\D}u_0\cdot\na e^{t\D}\D_ju_0^\h | \p_tv_j^\h\,dx\big|\leq & C\|e^{t\D}u_0\|_{L^\infty}\|\na e^{t\D}\D_ju_0^\h\|_{L^2}\|\sqrt{\r}\p_tv_j\|_{L^2}\\
\leq &C\|e^{t\D}u_0\|_{B^{\f32}}^2\|\na e^{t\D}\D_ju_0^\h\|_{L^2}^2+\f1{10}\|\sqrt{\r}\p_tv_j\|_{L^2}^2.
\end{split}
\eeno

\noindent\underline{$\bullet$ \it  Estimate for $\int_{\R^3}\r\bigl(e^{t\D}\D_j u_0^\h\cdot\na_\h e^{t\D}u_0^3-e^{t\D}u_0^3e^{t\D}\D_j\dive_\h u_0^\h\bigr)
 | \p_tv_j^3\,dx.$ }

 It follows from the law of product, \eqref{S3eq0}, that
 \beno
\begin{split}
\bigl|&\int_{\R^3}\r\bigl(e^{t\D}\D_ju_0^\h\cdot\na_\h e^{t\D}u_0^3-e^{t\D}u_0^3e^{t\D}\D_j\dive_\h u_0^\h\bigr)
 | \p_tv_j^3\,dx\bigr|\\
 &\leq \bigl(\bigl\|\na_\h e^{t\D} u_0^3\bigr\|_{B^{\f12}}+\bigl\|e^{t\D}u_0^3\|_{L^\infty}\bigr)\bigl\|\na e^{t\D}\D_j u_0^\h\bigr\|_{L^2}\|\sqrt{\r}\p_tv_j\|_{L^2}\\
 &\leq C\bigl\|e^{t\D}u_0^3\|_{B^{\f32}}^2\bigl\|\na e^{t\D}\D_j u_0^\h\bigr\|_{L^2}+\f1{10}\|\sqrt{\r}\p_tv_j\|_{L^2}.
 \end{split}
 \eeno

 Inserting the above estimates into \eqref{S4eq6} leads to
 \beq \label{S4eq11}
 \begin{split}
\f{d}{dt}\|\na v_j(t)\|_{L^2}^2&+\|\sqrt{\r}\p_tv_j\|_{L^2}^2\leq C\bigl(\|v\|_{L^\infty}^2+\bigl\|e^{t\D}u_0\|_{B^{\f32}}^2\bigr)\|\na v_j\|_{L^2}^2\\
&+C\Bigl(\|a_0\|_{L^\infty}^2\|\D e^{t\D}\D_j u_0\|_{L^2}^2+\|e^{t\D}u_0\|_{B^{\f32}}^2\|\na e^{t\D}\D_ju_0^\h\|_{L^2}^2\Bigr).
\end{split}
\eeq
Applying Gronwall's inequality yields
\beno
\begin{split}
\|\na v_j\|_{L^\infty_t(L^2)}^2&+\|\sqrt{\r}\p_tv_j\|_{L^2_t(L^2)}^2\leq C\exp\bigl(\|v\|_{L^2_t(L^\infty)}^2+\bigl\|e^{t\D}u_0\|_{L^2_t(B^{\f32)}}^2\bigr)\\
&\times \Bigl(\|a_0\|_{L^\infty}^2\|\D e^{t\D}\D_j u_0\|_{L^2_t(L^2)}^2+\|e^{t\D}u_0\|_{L^2_t(B^{\f32})}^2\|\na e^{t\D}\D_ju_0^\h\|_{L^\infty_t(L^2)}^2\Bigr).
\end{split}
\eeno
Note from Lemma \ref{S4lem0} that
\beno
\begin{split}
\bigl\|e^{t\D}u_0\|_{L^2_t(B^{\f32)}}\leq &C\|u_0\|_{B^{\f12}},\quad \|\D e^{t\D}\D_j u_0\|_{L^2_t(L^2)}^2\leq Cd_j2^{\f{j}2}\|u_0\|_{B^{\f12}} \andf\\
&\|\na e^{t\D}\D_ju_0^\h\|_{L^\infty_t(L^2)}\leq Cd_j2^{\f{j}2}\|u_0\|_{B^{\f12}}.
\end{split}
\eeno
Then for $t\leq T_2^\star,$ we find
\beq\label{S4eq9}
\begin{split}
\|\na v_j\|_{L^\infty_t(L^2)}^2+\|\sqrt{\r}\p_tv_j\|_{L^2_t(L^2)}^2\leq & Cd_j^22^j\exp\bigl(C\|u_0\|_{B^{\f12}}^2\bigr) \bigl(\|a_0\|_{L^\infty}^2+\|u_0^\h\|_{B^{\f12}}^2\bigr)\| u_0\|_{B^{\f12}}^2\\
\leq & Cd_j^22^j\exp\bigl(C\|u_0\|_{B^{\f12}}^2\bigr) \bigl(\|a_0\|_{L^\infty}^2+\|u_0^\h\|_{B^{\f12}}^2\bigr).
\end{split}
\eeq
On the other hand, we deduce from \eqref{S4eq3} that
\beq \label{S4eq9a}
\begin{split}
\|(\na^2v_j,\na\pi_j)\|_{L^2}\leq &C\Bigl(\bigl(\|v\|_{L^\infty}+\|e^{t\D}u_0\|_{B^{\f32}}\bigr)
   \|\na v_j\|_{L^2}+\|\sqrt{\r}\p_tv_j\|_{L^2}\\
   &\quad\ +\|e^{t\D}u_0\|_{L^\infty}\|\na e^{t\D}\D_ju_0^\h\|_{L^2}+\|a_0\|_{L^\infty}\|\D e^{t\D}\D_j u_0\|_{L^2}\Bigr),
   \end{split}
   \eeq
Taking $L^2$ norm with respect to time yields
\beno
\begin{split}
\|(\na^2v_j,\na\pi_j)\|_{L^2_t(L^2)}\leq &C\Bigl(\bigl(\|v\|_{L^2_t(L^\infty)}+\bigl\|e^{t\D}u_0\bigr\|_{L^2_t(B^{\f32}))}\bigr)
\|\na v_j\|_{L^\infty_t(L^2)}\\
&\quad\ +\bigl\|e^{t\D}u_0\bigr\|_{L^2_t(L^\infty)}\|\na e^{t\D}\D_ju_0^\h\|_{L^\infty_t(L^2)}\\
&\quad \ +\|\sqrt{\r}\p_tv_j\|_{L^2_t(L^2)}+\|a_0\|_{L^\infty}\|\D e^{t\D}\D_j u_0\|_{L^2_t(L^2)}\Bigr),
\end{split}
\eeno which together with \eqref{S4eq9} ensures that
\beno
\|(\na^2v_j,\na\pi_j)\|_{L^2_t(L^2)}\leq Cd_j2^{\f{j}2} \bigl(\|a_0\|_{L^\infty}+\|u_0^\h\|_{B^{\f12}}\bigr)\exp\bigl(C\|u_0\|_{B^{\f12}}^2\bigr).
\eeno
Along with \eqref{S4eq9}, we conclude the proof of \eqref{S4eq8}.
\end{proof}

\begin{proof}[Proof of Proposition \ref{S4prop1}]
With \eqref{S4eq5} and \eqref{S4eq8}, we get, by a similar derivation of \eqref{S3eq7}, that
\beno
\|v\|_{\wt{L}^\infty(B^{\f12})}+\|\na v\|_{\wt{L}^2_t(B^{\f12})}\leq \bigl(\|a_0\|_{L^\infty}^2+\|u_0^\h\|_{B^{\f12}}^2\bigr)\exp \bigl(C\|u_0\|_{B^{\f12}}^2\bigr)\quad
\mbox{for}\quad t\leq T^\star_2, \eeno
which together with \eqref{small1} ensures that \eqref{S4eq10} holds for $t\leq T_2^\star.$

Then for $t\leq T_2^\star,$ we have
\beno
\|v\|_{L^2_t(L^\infty)}\leq C\|v\|_{\wt{L}^2_t(B^{\f32})}\leq C\e_0,
\eeno
for $\e_0$ given by \eqref{small1}. In particular, if we take $\e_0$ in \eqref{small1} so small that $C\e_0\leq\f12,$ we deduce by a continuous argument  that
$T_2^\star$ determined by \eqref{S4eq7} can be any number smaller than $T^\ast.$ This proves \eqref{S4eq10}.
\end{proof}

\begin{rmk}
Once again by virtue of \eqref{S4eq10}, it follows from classical theory of inhomogeneous incompressible Navier-Stokes
system that $T^\ast=\infty.$
\end{rmk}

\begin{col}\label{S4col1}
{\sl Under the assumptions of Proposition \ref{S4prop1},  for any $t>0$ and $\eta$ given by \eqref{small1}, we have
\beq \label{S4eq12}
\begin{split}
 \|\sqrt{t}\na v_j\|_{L^\infty_t(L^2)}^2+&\|\sqrt{t}(\p_t v_j,\na^2 v_j, \na\pi_j)\|_{L^2_t(L^2)}^2
 \leq
  C\eta d_j^22^{-j}.
  \end{split}
 \eeq }
\end{col}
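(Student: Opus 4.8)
The plan is to follow verbatim the scheme that produced the time-weighted bound \eqref{S3eq9} in the proof of Theorem \ref{thm1}: take the energy differential inequality already established for the frequency-localized system \eqref{S4eq3}, weight it by the power of $t$ dictated by \eqref{S4eq12}, and close the result by Gronwall together with the sharp heat-flow decay of the forcing terms. Concretely, the inequality \eqref{S4eq11} derived in the proof of Lemma \ref{S4lem2} reads
\beno
\f{d}{dt}\|\na v_j(t)\|_{L^2}^2+\|\sqrt{\r}\p_tv_j\|_{L^2}^2\leq C\bigl(\|v\|_{L^\infty}^2+\|e^{t\D}u_0\|_{B^{\f32}}^2\bigr)\|\na v_j\|_{L^2}^2+C S_j(t),
\eeno
where $S_j(t)\eqdefa \|a_0\|_{L^\infty}^2\|\D e^{t\D}\D_j u_0\|_{L^2}^2+\|e^{t\D}u_0\|_{B^{\f32}}^2\|\na e^{t\D}\D_j u_0^\h\|_{L^2}^2$ collects the inhomogeneous source. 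Multiplying through by $t$ converts the left-hand side into $\f{d}{dt}\bigl(t\|\na v_j\|_{L^2}^2\bigr)+t\|\sqrt{\r}\p_tv_j\|_{L^2}^2$, at the price of the extra unweighted term $\|\na v_j\|_{L^2}^2$ on the right, whose $L^1_t$ norm is controlled by Lemma \ref{S4lem1}, i.e. by \eqref{S4eq5}.

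I would then apply Gronwall's inequality. The exponential factor $\exp\bigl(C\int_0^t(\|v\|_{L^\infty}^2+\|e^{t'\D}u_0\|_{B^{\f32}}^2)\,dt'\bigr)$ is bounded by $\exp(C\|u_0\|_{B^{\f12}}^2)$, since $\|v\|_{L^2_t(L^\infty)}\le1$ for every $t$ (by Proposition \ref{S4prop1}, $T_2^\star$ may be taken arbitrarily large and $T^\ast=\infty$) and $\|e^{t\D}u_0\|_{L^2_t(B^{\f32})}\le C\|u_0\|_{B^{\f12}}$ by Lemma \ref{S4lem0}. It remains to bound the two time-weighted source integrals $\int_0^t t'S_j(t')\,dt'$. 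For the first, $\|\D e^{t'\D}\D_j u_0\|_{L^2}\lesssim 2^{2j}e^{-ct'2^{2j}}\|\D_j u_0\|_{L^2}$ and $\int_0^\infty t'\,2^{4j}e^{-2ct'2^{2j}}\,dt'\lesssim1$ give $C\|a_0\|_{L^\infty}^2\|\D_j u_0\|_{L^2}^2\lesssim \|a_0\|_{L^\infty}^2 d_j^2 2^{-j}\|u_0\|_{B^{\f12}}^2$; for the second I would peel off $\sup_{t'}\bigl(t'2^{2j}e^{-ct'2^{2j}}\bigr)\lesssim1$ and integrate the residual heat decay against $\|e^{t'\D}u_0\|_{B^{\f32}}^2$, reaching $Cd_j^2 2^{-j}\|u_0^\h\|_{B^{\f12}}^2\|u_0\|_{B^{\f12}}^2$. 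Absorbing the harmless factor $\|u_0\|_{B^{\f12}}^2\exp(C\|u_0\|_{B^{\f12}}^2)$ into a larger exponential and using the smallness of $\|u_0^\h\|_{B^{\f12}}$ together with the definition \eqref{small1} of $\eta$, both contributions are $\lesssim \eta^2 d_j^2 2^{-j}$. Combined with the $L^1_t$ bound from \eqref{S4eq5}, this yields $\|\sqrt{t}\na v_j\|_{L^\infty_t(L^2)}^2+\|\sqrt{t}\,\p_tv_j\|_{L^2_t(L^2)}^2\lesssim \eta^2 d_j^2 2^{-j}$ (the weights $\sqrt{\r}$ produced by Gronwall being removable by the boundedness of $\r$).

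Finally, the bounds for $\na^2 v_j$ and $\na\pi_j$ are not dynamical: I would reinstate them from the Stokes-type elliptic estimate \eqref{S4eq9a}, multiply it by $\sqrt{t}$, take the $L^2_t$ norm, and insert the bounds just obtained for $\sqrt{t}\na v_j$ and $\sqrt{t}\,\p_tv_j$ together with the same heat estimates for the forcing. Since $\eta\le\e_0\le1$, the resulting bound $\lesssim \eta^2 d_j^2 2^{-j}$ in particular gives \eqref{S4eq12}. The only genuinely delicate step is the second source integral: because $\|e^{t'\D}u_0\|_{B^{\f32}}^2$ is not a single dyadic block, one cannot simply multiply two heat-decay factors, and its $u_0^3$-dependence is a priori large. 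The mechanism that rescues the estimate is to keep only the $\D_j u_0^\h$ factor frequency-localized (this alone supplies both the $d_j^2 2^{-j}$ and the smallness $\|u_0^\h\|_{B^{\f12}}^2$) while transferring all of the $u_0^3$ growth into the Gronwall exponential, where it is reabsorbed into $\eta^2$ through \eqref{small1}.
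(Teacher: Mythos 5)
Your proposal is correct and follows essentially the same route as the paper: multiply the differential inequality \eqref{S4eq11} by $t$, control the extra unweighted term $\|\na v_j\|_{L^2_t(L^2)}^2$ by Lemma \ref{S4lem1}, close via Gronwall with the exponential bounded through $\|v\|_{L^2_t(L^\infty)}\leq 1$ and Lemma \ref{S4lem0}, estimate the two weighted source terms by the heat-kernel decay exactly as you describe (including taking $\sqrt{t}\na e^{t\D}\D_j u_0^\h$ in $L^\infty_t(L^2)$ against $e^{t\D}u_0$ in $L^2_t(B^{\f32})$), and recover $\na^2 v_j,\na\pi_j$ from the elliptic estimate \eqref{S4eq9a}. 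Your closing bound $C\eta^2 d_j^2 2^{-j}\leq C\eta d_j^2 2^{-j}$ matches the paper's statement since $\eta\leq\e_0$.
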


\begin{proof} We first get, by multiplying $t$ to \eqref{S4eq11} and then applying Gronwall's inequality, that
\beno
 \begin{split}
\|\sqrt{t}\na v_j\|_{L^\infty_t(L^2)}^2+\|\sqrt{\r t}\p_tv_j\|_{L^2_t(L^2)}^2\leq & C\exp \bigl(C\|v\|_{L^2_t(L^\infty)}^2+C\bigl\|e^{t\D}u_0\|_{L^2_t(B^{\f32})}^2\bigr)\\
&\ \times\Bigl(\|\na v_j\|_{L^2_t(L^2)}^2+\|a_0\|_{L^\infty}^2\|\sqrt{t}\D e^{t\D}\D_j u_0\|_{L^2_t(L^2)}^2\\
&\qquad\quad +\|e^{t\D}u_0\|_{L^2_t(B^{\f32})}^2\|\sqrt{t}\na e^{t\D}\D_ju_0^\h\|_{L^\infty_t(L^2)}^2\Bigr).
\end{split}
\eeno
Yet it follows Lemma \ref{S4lem0} that
\beno
\begin{split}
\|\sqrt{t}\D e^{t\D}\D_j u_0\|_{L^2_t(L^2)}^2\leq &C2^{4j}\int_0^t t'e^{-ct'2^{2j}}\|\D_ju_0\|_{L^2}^2\,dt\\
\leq &C2^{2j}\int_0^t e^{-\f{c}2t'2^{2j}}\,dt\|\D_ju_0\|_{L^2}^2\\
\leq &C\|\D_ju_0\|_{L^2}^2\leq  Cd_j^22^{-j}\|u_0\|_{B^{\f12}}^2.
\end{split}
\eeno
and
\beno
\begin{split}
\|\sqrt{t}\na e^{t\D}\D_ju_0^\h(t)\|_{L^2}^2\leq & Ct2^{2j}e^{-ct'2^{2j}}\|\D_ju_0^\h\|_{L^2}^2\\
\leq & C\|\D_ju_0^\h\|_{L^2}^2\leq  Cd_j^22^{-j}\|u_0^\h\|_{B^{\f12}}^2,
\end{split}
\eeno
As a result, we deduce from Proposition \ref{S4prop1} that
\beno
\|\sqrt{t}\na v_j\|_{L^\infty_t(L^2)}^2+\|\sqrt{ t}\p_tv_j\|_{L^2_t(L^2)}^2\leq C\eta d_j^22^{-j}.
\eeno
This together with \eqref{S4eq9a} ensures that
\beno
\|\sqrt{t}(\na^2 v_j, \na\pi_j)\|_{L^2_t(L^2)}^2
 \leq
  C\eta d_j^22^{-j}.
  \eeno
This proves \eqref{S4eq12}.
\end{proof}

\begin{lem}\label{S4lem3}
{\sl Under the assumptions of proposition \ref{S4prop1}, for any $t>0,$  we have
\beq \label{S4eq16}
\begin{split}
\f{d}{dt}\|&\sqrt{\r}\p_tv_j\|_{L^2}^2+\|\na\p_tv_j\|_{L^2}^2 \leq  \|\na\p_t e^{t\D}\D_ju_0^\h\|_{L^2}^2+C\|\sqrt{\r}\p_tv_j\|_{L^2}^2
\\
&\times \bigl(\|u\|_{B^{\f32}}^2+
\|t^{\f14}u_t\|_{L^2}^2+\|e^{t\D}u_0\|_{B^{\f32}}^2+\|\na e^{t\D}u_0\|_{B^{\f32}}+\|\sqrt{t}e^{t\D}u_0\|_{B^{\f72}} \bigr)\\
&+C\bigl[\bigl(t^{-1}+t^{-\f12}\bigl(\|v\|_{L^\infty}+
 \|e^{t\D}u_0\|_{B^{\f32}}\bigr)\|t^{\f14}u_t\|_{L^2}^2 +\|e^{t\D}u_0\|_{B^{\f52}}^2\\
&\qquad +t^{-\f12}
\|e^{t\D}u_0\|_{B^{\f72}}+\|u\|_{L^\infty}^2\bigl(\|u\|_{L^\infty}^2+\|e^{t\D}u_0\|_{B^{\f32}}^2\bigr)\bigr]\|\na v_j\|_{L^2}^2 \\
&+ Ct^{-\f12}\|t^{\f14}u_t\|_{L^2}^2\bigl(
 \|e^{t\D}u_0\|_{B^{\f32}}\|\na e^{t\D}\D_ju_0^\h\|_{L^2} +\|a_0\|_{L^\infty} \|\na^2 e^{t\D}\D_ju_0^\h\|_{L^2} \bigr)\|\na v_j\|_{L^2} \\
&+ \|a_0\|_{L^\infty}^2\bigl(\|\na\D e^{t\D}\D_ju_0\|_{L^2}^2   +\|u\|_{L^\infty}^2\|\D e^{t\D}\D_ju_0\|_{L^2}^2\bigr)\\
&+\bigl( \|e^{t\D}u_0\|_{B^{\f52}}^2+\|u\|_{L^\infty}^2\|e^{t\D}u_0\|_{B^{\f32}}^2+t^{-\f12}\|\p_te^{t\D}u_0^3\|_{B^{\f32}} \bigr) \|\na e^{t\D}\D_ju_0^\h\|_{L^2}^2\\
&+\bigl(t^{-1}+\|e^{t\D}u_0\|_{B^{\f32}}^2\bigr)\|\na^2 e^{t\D}\D_ju_0^\h\|_{L^2}^2+\|a_0\|_{L^\infty}\|\D\p_te^{t\D}\D_ju_0\|_{L^2}\|\sqrt{\r}\p_tv_j\|_{L^2}.
\end{split}
\eeq}
\end{lem}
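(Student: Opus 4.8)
The plan is to follow the scheme of Proposition \ref{S3prop2}, differentiating the momentum equation of \eqref{S4eq3} in time and testing against $\p_tv_j$. Write the convection velocity as $u=v+e^{t\D}u_0$ and record the transport identity $\p_t\r+u\cdot\na\r=0$ (equivalently $D_t\r=0$), which follows from the $a$-equation in \eqref{S4eq2} since $\r=\f1{1+a}$; note also that $\|a(t)\|_{L^\infty}=\|a_0\|_{L^\infty}$ is conserved by transport, which will supply all the $\|a_0\|_{L^\infty}$ factors in \eqref{S4eq16}. Applying $\p_t$ to the $v_j$-equation and collecting the terms carrying $\p_t\r$ into material-derivative form, exactly as in the passage producing \eqref{S3eq10}, I obtain an equation of the schematic shape
\[
\r\p_t^2v_j+\r u\cdot\na\p_tv_j-\D\p_tv_j+\na\p_t\pi_j=\mathcal R_j,
\]
where $\mathcal R_j$ gathers $-\p_t\r\,D_tv_j$, the mixed term $\r u_t\cdot\na v_j$, and the time derivatives of the lower-order reaction term $\r v_j\cdot\na e^{t\D}u_0$ and of the forcing $\r F_j+\r a\,\D e^{t\D}\D_ju_0$.

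Next I take the $L^2$ inner product with $\p_tv_j$. Since $\dive v_j=0$ gives $\dive\p_tv_j=0$, the pressure contribution $\int_{\R^3}\na\p_t\pi_j\,|\,\p_tv_j\,dx$ vanishes — this is the decisive simplification relative to Proposition \ref{S3prop3}, and it is why no $\pi_j$ appears in \eqref{S4eq16}. Using $D_t\r=0$ together with integration by parts on the convection term, the left-hand side produces $\f12\f{d}{dt}\|\sqrt{\r}\p_tv_j\|_{L^2}^2+\|\na\p_tv_j\|_{L^2}^2$, so it remains to estimate $\int_{\R^3}\mathcal R_j\,|\,\p_tv_j\,dx$ term by term. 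The genuinely quadratic pieces ($\p_t\r\,D_tv_j$, $\r u_t\cdot\na v_j$, and the part of the reaction term linear in $\p_tv_j$) are treated exactly as in the four displayed estimates of Proposition \ref{S3prop2}: Hölder with the $L^3$–$L^6$ pairing, the embeddings $\|f\|_{L^3}\lesssim\|f\|_{B^{\f12}}$ and $\|\na f\|_{L^6}\lesssim\|\na^2f\|_{L^2}$, the product law \eqref{S3eq0}, and Young's inequality to absorb a fraction of $\|\na\p_tv_j\|_{L^2}^2$; this generates the contributions weighted by $\|u\|_{B^{\f32}}^2$, $\|t^{\f14}u_t\|_{L^2}^2$ and $\|\na e^{t\D}u_0\|_{B^{\f32}}$ against $\|\sqrt{\r}\p_tv_j\|_{L^2}^2$ or $\|\na v_j\|_{L^2}^2$.

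The remaining, and hardest, terms are those in which $\p_t$ lands on the heat flow. Because $\p_te^{t\D}=\D e^{t\D}$, differentiating $v_j\cdot\na e^{t\D}u_0$, the forcing $F_j$, and $a\,\D e^{t\D}\D_ju_0$ raises the spatial regularity of the heat factors by two derivatives; I control these through the weighted smoothing bounds of Lemma \ref{S4lem0}, which trade the extra derivatives for inverse powers of $t$ and thereby produce the weights $t^{-\f12}$, $t^{-1}$ and the higher Besov norms $\|e^{t\D}u_0\|_{B^{\f52}}$, $\|\sqrt{t}e^{t\D}u_0\|_{B^{\f72}}$ and $\|\p_te^{t\D}u_0^3\|_{B^{\f32}}$ of \eqref{S4eq16}. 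The delicate point — and the main obstacle — is the bookkeeping: one must distribute derivatives by integration by parts (using $\dive u=0$) so that only the controllable quantities $\|\na v_j\|_{L^2}$, $\|\sqrt{\r}\p_tv_j\|_{L^2}$ and $\|\na\p_tv_j\|_{L^2}$ survive on the $v_j$ side, while never exposing an uncontrolled $\na\r$ or $\na a$. For the forcing one keeps $\|a\|_{L^\infty}=\|a_0\|_{L^\infty}$ as a coefficient: the piece $\r a\,\D\p_te^{t\D}\D_ju_0$ is estimated directly against $\sqrt{\r}\p_tv_j$ (giving the final $\|a_0\|_{L^\infty}\|\D\p_te^{t\D}\D_ju_0\|_{L^2}\|\sqrt{\r}\p_tv_j\|_{L^2}$ term), while the remaining heat-regularity gains yield the $\|a_0\|_{L^\infty}^2\bigl(\|\na\D e^{t\D}\D_ju_0\|_{L^2}^2+\|u\|_{L^\infty}^2\|\D e^{t\D}\D_ju_0\|_{L^2}^2\bigr)$ contributions. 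Collecting every estimate, absorbing the accumulated multiples of $\|\na\p_tv_j\|_{L^2}^2$ into the left-hand side and multiplying by two, gives precisely the differential inequality \eqref{S4eq16}.
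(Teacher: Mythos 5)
Your proposal follows the paper's proof essentially verbatim: apply $\p_t$ to the $v_j$-equation of \eqref{S4eq3}, test against $\p_tv_j$ (the pressure term vanishing since $\dive\p_tv_j=0$), exploit the transport equations for $\r$ and $a$ together with integration by parts so that only $\|a\|_{L^\infty}=\|a_0\|_{L^\infty}$ and never $\na\r$ or $\na a$ appears, and bound the quadratic terms exactly as in Proposition \ref{S3prop2}. The one cosmetic inaccuracy is your attribution of the weights $t^{-\f12}$, $t^{-1}$ in \eqref{S4eq16} to Lemma \ref{S4lem0}: in the paper these arise from inserting $t^{\pm\f12}$ factors in Young's inequality (e.g.\ writing $\|u_t\|_{L^2}^2=t^{-\f12}\|t^{\f14}u_t\|_{L^2}^2$ and bounding $\|\na\p_te^{t\D}u_0\|_{B^{\f12}}$ by $\|\p_te^{t\D}u_0\|_{B^{\f32}}$), while the heat-flow norms are left unevaluated in \eqref{S4eq16} and Lemma \ref{S4lem0} is only invoked later, in the proof of Proposition \ref{S4prop2}, to integrate them in time — this does not affect the validity of your argument.
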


\begin{proof} In the rest of this section, we shall always denote $u\eqdefa v+e^{t\D}u_0$ and $D_t=\p_t+u\cdot\na.$ Then we get, by applying $\p_t$ to
 the $v_j$ equation
of \eqref{S4eq3}, that
\beq \label{S4eq13}
\begin{split}
\r\p_t^2v_j+\r u\cdot\na \p_tv_j+&\p_t\bigl(\r v_j\cdot\na e^{t\D} u_0\bigr)-\Delta \p_tv_j+\na \p_t\pi_j\\
 =&-\r_t D_tv_j-\r\p_tu\cdot\na\p_tv_j+\p_t(\r F_j)+\p_t\bigl(\r {a}\D e^{t\D}\D_j u_0\bigr).
\end{split}
\eeq
Taking $L^2$ inner product of the above equation with $\p_tv_j,$ we obtain
\beq \label{S4eq14}
\begin{split}
\f12\f{d}{dt}\|\sqrt{\r}\p_tv_j(t)\|_{L^2}^2&+\|\na\p_tv_j\|_{L^2}^2=-\int_{\R^3}\bigl(\r_t D_tv_j+\r\p_tu\cdot\na\p_tv_j\bigr) | \p_tv_j\,dx\\
&-\int_{\R^3}\p_t\bigl(\r v_j\cdot\na e^{t\D} u_0\bigr)   | \p_tv_j\,dx  +\int_{\R^3}\p_t(\r F_j)  | \p_tv_j\,dx\\
&+\int_{\R^3}\p_t\bigl(\r {a}\D e^{t\D}\D_j u_0\bigr)   | \p_tv_j\,dx.
\end{split}
\eeq

\noindent\underline{$\bullet$ \it  Estimate for $\int_{\R^3}\bigl(\r_t D_tv_j+\r\p_tu\cdot\na\p_tv_j\bigr) | \p_tv_j\,dx.$ }

Observing that
\beno
\begin{split}
\bigl|\int_{\R^3} u\cdot\na v_j | \r u\cdot\na \p_tv_j\,dx\bigr|\leq &C\|u\|_{L^\infty}^2\|\na v_j\|_{L^2}\|\na\p_tv_j\|_{L^2}\\
\leq &C\|u\|_{L^\infty}^4\|\na v_j\|_{L^2}^2+\f1{24}\|\na\p_tv_j\|_{L^2}^2.
\end{split}
\eeno
Then along the same line to the proof of \eqref{S3eq17}, we have
\beno\begin{split}
\bigl|\int_{\R^3}\bigl(\r_t D_tv_j+\r\p_tu\cdot\na\p_tv_j\bigr) |& \p_tv_j\,dx\bigr|
\leq  C\Bigl(\|u\|_{B^{\f32}}^2\|\sqrt{\r}\p_tv_j\|_{L^2}^2+\|u\|_{L^\infty}^4\|\na v_j\|_{L^2}^2\\
&+\|u_t\|_{L^2}^2\|\na v_j\|_{L^2}\bigl(\|\na^2v_j\|_{L^2}+\|\sqrt{\r}\p_tv_j\|_{L^2}\bigr)\Bigr)+\f1{8}\|\na\p_tv_j\|_{L^2}^2,
\end{split}
\eeno
which together with \eqref{S4eq9a} ensures that
\beno\begin{split}
\bigl|\int_{\R^3}\bigl(\r_t D_tv_j&+\r\p_tu\cdot\na\p_tv_j\bigr) | \p_tv_j\,dx\bigr|
\leq  C\Bigl(\bigl(\|u\|_{B^{\f32}}^2+\|t^{\f14}u_t\|_{L^2}^2\bigr)\|\sqrt{\r}\p_tv_j\|_{L^2}^2\\
&+\bigl[\|t^{\f14}u_t\|_{L^2}^2\bigl(t^{-1}+t^{-\f12}(\|v\|_{L^\infty}+\|e^{t\D}u_0\|_{B^{\f32}})\bigr)+\|u\|_{L^\infty}^4\bigr]\|\na v_j\|_{L^2}^2\\
&+\bigl(\|e^{t\D}u_0\|_{B^{\f32}}\|\na e^{t\D}\D_ju_0^\h\|_{L^2}+\|a_0\|_{L^\infty}\|\D e^{t\D}\D_ju_0\|_{L^2}\bigr)\\
&\qquad\qquad\qquad\qquad\qquad\times t^{-\f12}\|t^{\f14}u_t\|_{L^2}^2\|\na v_j\|_{L^2}\Bigr)
+  \f1{8}\|\na\p_tv_j\|_{L^2}^2.
\end{split}
\eeno

\noindent\underline{$\bullet$ \it  Estimate for $\int_{\R^3}\p_t\bigl(\r v_j\cdot\na e^{t\D} u_0\bigr)   | \p_tv_j\,dx.$ }

It is easy to observe that
\beno
\begin{split}
\int_{\R^3}\p_t\bigl(\r v_j\cdot\na e^{t\D} u_0\bigr)  & | \p_tv_j\,dx=\int_{\R^3}\p_t\r v_j\cdot\na e^{t\D} u_0   | \p_tv_j\,dx\\
&+\int_{\R^3}\r \p_t v_j\cdot\na e^{t\D} u_0  | \p_tv_j\,dx+\int_{\R^3}\r  v_j\cdot\na \p_t e^{t\D} u_0  | \p_tv_j\,dx.
\end{split}
\eeno
Thanks to the transport equation of \eqref{S1eq1}, we get, by using integration by parts, that
\beno
\begin{split}
\int_{\R^3}\p_t\r v_j\cdot\na e^{t\D} u_0 &  | \p_tv_j\,dx=\int_{\R^3}\r (u\cdot\na v_j)\cdot\na e^{t\D} u_0   | \p_tv_j\,dx\\
&+\int_{\R^3}\r v_j\otimes u: \na^2e^{t\D}u_0   | \p_tv_j\,dx+\int_{\R^3}\r v_j\cdot\na  e^{t\D} u_0   | u\cdot\na\p_tv_j\,dx.
\end{split}
\eeno
It is easy to observe that
\beno
\begin{split}
\bigl|\int_{\R^3}\r (u\cdot\na v_j)\cdot\na e^{t\D} u_0   | \p_tv_j\,dx\bigr|\leq &C\|u\|_{L^\infty}\|\na v_j\|_{L^2}\|\na e^{t\D}u_0\|_{L^\infty}\|\sqrt{\r}\p_tv_j\|_{L^2}\\
\leq &C\bigl(\|u\|_{L^\infty}^2\|\sqrt{\r}\p_tv_j\|_{L^2}^2+\|\na e^{t\D}u_0\|_{B^{\f32}}^2\|\na v_j\|_{L^2}^2\bigr).
\end{split}
\eeno
Similarly, one has
\beno
\begin{split}
\bigl|\int_{\R^3}\r v_j\otimes u: \na^2e^{t\D}u_0   | \p_tv_j\,dx\bigr|\leq & C\|v_j\|_{L^6}\|u\|_{L^\infty}\|\na^2e^{t\D}u_0\|_{L^3}\|\sqrt{\r}\p_tv_j\|_{L^2}\\
\leq &C\bigl(\|u\|_{L^\infty}^2\|\sqrt{\r}\p_tv_j\|_{L^2}^2+\|\na e^{t\D}u_0\|_{B^{\f32}}^2\|\na v_j\|_{L^2}^2\bigr),
\end{split}
\eeno
and
\beno
\begin{split}
\bigl|\int_{\R^3}\r v_j\cdot\na  e^{t\D} u_0   | u\cdot\na\p_tv_j\,dx\bigr|\leq &C\|v_j\|_{L^6}\|\na e^{t\D}u_0\|_{L^3}\|u\|_{L^\infty}\|\na\p_tv_j\|_{L^2}\\
\leq &C\|u\|_{L^\infty}^2\|e^{t\D}u_0\|_{B^{\f32}}^2\|\na v_j\|_{L^2}^2+\f1{8}\|\na\p_tv_j\|_{L^2}^2.
\end{split}
\eeno
Whereas we notice that
\beno
\begin{split}
\bigl|\int_{\R^3}\r \p_tv_j\cdot\na e^{t\D}u_0 | \p_tv_j\,dx\bigr|\leq &\bigl\| \na e^{t\D}u_0\|_{L^\infty}\|\sqrt{\r} \p_tv_j\|_{L^2}^2,
\end{split}
\eeno
and   it follows from the law of product, \eqref{S3eq0}, that
\beno
\begin{split}
\bigl|\int_{\R^3}\r  v_j\cdot\na \p_t e^{t\D} u_0  | \p_tv_j\,dx  \bigr|\leq &C\|\na v_j\|_{L^2}\|\na  \p_t e^{t\D} u_0\|_{B^{\f12}}\|\sqrt{\r} \p_tv_j\|_{L^2}\\
\leq & Ct^{-\f12} \|\p_t e^{t\D} u_0\|_{B^{\f32}} \|\na v_j\|_{L^2}^2+ t^{\f12} \|\p_t e^{t\D} u_0\|_{B^{\f32}} \|\sqrt{\r} \p_tv_j\|_{L^2}^2.
\end{split}
\eeno
As a result, it comes out
\beno
\begin{split}
\bigl|\int_{\R^3}\p_t\bigl(\r v_j&\cdot\na e^{t\D} u_0\bigr)   | \p_tv_j\,dx  \bigr| \leq   \f1{8}\|\na\p_tv_j\|_{L^2}^2 \\
&\quad+ C\Bigl(\bigl(\|u\|_{L^\infty}^2+
 \bigl\|\na e^{t\D}u_0\|_{B^{\f32}}   +   t^{\f12} \|\p_t e^{t\D} u_0\|_{B^{\f32}} \bigr)\|\sqrt{\r} \p_tv_j\|_{L^2}^2 \\
&\quad+\bigl(\|e^{t\D}u_0\|_{B^{\f52}}^2+\|  u\|_{L^\infty}^2\|e^{t\D}u_0\|_{B^{\f32}}^2+t^{-\f12}
\|\p_t e^{t\D} u_0\|_{B^{\f32}}\bigr)\|\na v_j\|_{L^2}^2 \Bigr).
\end{split}
\eeno
\noindent\underline{$\bullet$ \it  Estimate for $\int_{\R^3}\p_t\bigl(\r {a}\D e^{t\D}\D_j u_0\bigr)   | \p_tv_j\,dx.$ }

Again thanks to the transport equation of \eqref{S4eq1} and notice that $\r=\f1{1+a},$ we get, by using integration by parts, that
\beno
\begin{split}
\int_{\R^3}&\p_t\bigl(\r {a}\D e^{t\D}\D_j u_0\bigr)   | \p_tv_j\,dx=\int_{\R^3}\f{a}{1+a} u\cdot\na\D e^{t\D}\D_j u_0   | \p_tv_j\,dx\\
&+\int_{\R^3}\f{a}{1+a}\D e^{t\D}\D_j u_0   | u\cdot\na\p_tv_j\,dx+\int_{\R^3}\f{a}{1+a}\D \p_t e^{t\D}\D_j u_0  | \p_tv_j\,dx,
\end{split}
\eeno
from which, we infer
\beno
\begin{split}
\bigl|\int_{\R^3}\p_t\bigl(\r {a}\D e^{t\D}\D_j u_0\bigr)&   | \p_tv_j\,dx\bigr|
\leq C\|a_0\|_{L^\infty}\Bigl(\|u\|_{L^\infty}\bigl(\|\sqrt{\r}\p_tv_j\|_{L^2}\bigl\|\na\D e^{t\D}\D_j u_0 \bigr\|_{L^2}\\
&+\|\na\p_tv_j\|_{L^2}\bigl\|\D e^{t\D}\D_j u_0 \bigr\|_{L^2}\bigr)+\bigl\|\D \p_t e^{t\D}\D_j u_0\bigr\|_{L^2}\|\sqrt{\r}\p_tv_j\|_{L^2}\Bigr).
\end{split}
\eeno
Then applying Young's inequality gives
\beno
\begin{split}
\bigl|\int_{\R^3}\p_t\bigl(\r{a}\D e^{t\D}\D_j u_0\bigr)   |& \p_tv_j\,dx\bigr|
\leq \f1{8}\|\na\p_tv_j\|_{L^2}^2+C\Bigl(\|u\|_{L^\infty}^2\|\sqrt{\r}\p_tv_j\|_{L^2}^2\\
&+\|a_0\|_{L^\infty}\bigl\|\D \p_t e^{t\D}\D_j u_0\bigr\|_{L^2}\|\sqrt{\r}\p_tv_j\|_{L^2}\\
&+\|a_0\|_{L^\infty}^2\bigl(\bigl\|\na\D e^{t\D}\D_j u_0 \bigr\|_{L^2}^2
+\|u\|_{L^\infty}^2\bigl\|\D e^{t\D}\D_j u_0 \bigr\|_{L^2}^2\bigr)\Bigr).
\end{split}
\eeno

\noindent\underline{$\bullet$ \it  Estimate for $\int_{\R^3}\p_t\r F_j  | \p_tv_j\,dx.$ }

In view of the transport equation of \eqref{S1eq1}, we get, by using integration by parts, that
\beno
  \int_{\R^3}\p_t\r F_j  | \p_tv_j\,dx=\int_{\R^3}\r u\cdot\na F_j   | \p_tv_j\,dx+ \int_{\R^3} F_j   | \r u\cdot\na\p_tv_j\,dx,
  \eeno
  from which, we infer
\beno
\begin{split}
\bigl|\int_{\R^3}\p_t\r F_j  | \p_tv_j\,dx \bigr|\leq & C\|u\|_{L^\infty}\bigl(\|\sqrt{\r}\p_tv_j\|_{L^2}\|\na F_j\|_{L^2}
+ \|F_j\|_{L^2}\|\na\p_tv_j\|_{L^2}\bigr) \\
\leq &C\|u\|_{L^\infty}^2\|\sqrt{\r}\p_tv_j\|_{L^2}^2+\|\na F_j\|_{L^2}^2+\|u\|_{L^\infty}^2\|F_j\|_{L^2}^2+\f1{8}\|\na\p_tv_j\|_{L^2}^2.
\end{split}
\eeno
Yet it follows from the law of product in Besove spaces that
\beno
\|F_j\|_{L^2}\leq C\|e^{t\D}u_0\|_{B^{\f32}}\|\na e^{t\D}\D_ju_0^\h\|_{L^2},
\eeno
and
 \beno
 \|\na F_j\|_{L^2}\leq C\bigl(\|e^{t\D}u_0\|_{B^{\f32}}\|\na^2 e^{t\D}\D_ju_0^\h\|_{L^2}+\|e^{t\D}u_0\|_{B^{\f52}}\|\na e^{t\D}\D_ju_0^\h\|_{L^2}\bigr).
 \eeno
Hence we obtain
\beno
\begin{split}
\bigl|\int_{\R^3}\p_t\r F_j & | \p_tv_j\,dx \bigr|
\leq C\Bigl(\|u\|_{L^\infty}^2\|\sqrt{\r}\p_tv_j\|_{L^2}^2+\|e^{t\D}u_0\|_{B^{\f32}}^2\|\na^2 e^{t\D}\D_ju_0^\h\|_{L^2}^2 \\
&    +\|e^{t\D}u_0\|_{B^{\f52}}^2\|\na e^{t\D}\D_ju_0^\h\|_{L^2}^2
+\|u\|_{L^\infty}^2\|e^{t\D}u_0\|_{B^{\f32}}^2\|\na e^{t\D}\D_ju_0^\h\|_{L^2}^2\Bigr)+\f1{8}\|\na\p_tv_j\|_{L^2}^2.
\end{split}
\eeno

\noindent\underline{$\bullet$ \it  Estimate for $\int_{\R^3}\r \p_t F_j  | \p_tv_j\,dx.$ }

It follows from the law of product in Sobolev spaces \eqref{S3eq0} that
\beno
\begin{split}
\|\p_tF_j\|_{L^2}\leq &\|\p_te^{t\D}u_0\|_{B^{\f12}}\|\na^2 e^{t\D}\D_ju_0^\h\|_{L^2}+ \|e^{t\D}u_0\|_{B^{\f32}}\|\na \p_te^{t\D}\D_ju_0^\h\|_{L^2}  \\
&+\|\p_te^{t\D}\D_j u_0^\h\|_{L^2}\|\na_\h e^{t\D}u_0^3\|_{L^\infty}+ \|\na e^{t\D}\D_j u_0^\h\|_{L^2}\|\na_\h \p_te^{t\D}u_0^3\|_{B^{\f12}}.
\end{split}
\eeno
This implies that
\beno
  \begin{split}
\bigl|\int_{\R^3}\r \p_t F_j  | \p_tv_j\,dx\bigr|\leq & C\bigl(\|t^{\f12}\p_te^{t\D}u_0\|_{B^{\f12}}^2+\|e^{t\D}u_0\|_{B^{\f32}}^2+  \|t^{\f12}\na_\h e^{t\D}u_0^3\|_{L^\infty}^2\\
  &+\|t^{\f12}\p_te^{t\D}u_0^3\|_{B^{\f32}} \bigr)\|\sqrt{\r}\p_tv_j\|_{L^2}^2
   +t^{-1}\|\na^2e^{t\D}\D_j u_0^\h\|_{L^2}^2\\
   &+ \|\na\p_t e^{t\D}\D_j u_0^\h\|_{L^2}^2
  +t^{-\f12}\| \p_te^{t\D}u_0^3\|_{B^{\f32}} \|\na e^{t\D}\D_j u_0^\h\|_{L^2}^2.
  \end{split}  \eeno
 Inserting the above estimates into \eqref{S4eq14} leads to \eqref{S4eq16}. This completes the proof of the Lemma.
 \end{proof}

 Before proceeding, we also need the following lemma:

 \begin{lem}\label{S4lem4}
 {\sl For any $s\geq 0,$ one has
  \beq \label{S4eq17}
  \bigl\|t^s e^{t\D} u_0\bigr\|_{L^1_t(B^{2s+\f52})}+  \bigl\|t^s e^{t\D} u_0\bigr\|_{L^2_t(B^{2s+\f32})}\leq C\|u_0\|_{B^{\f12}}.
  \eeq}
  \end{lem}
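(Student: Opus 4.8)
The plan is to decompose $u_0$ into Littlewood--Paley blocks and exploit the frequency-localized smoothing of the heat flow recorded in Lemma \ref{S4lem0}. Since both time-exponents $1$ and $2$ are $\ge 1$, Minkowski's inequality gives $\|f\|_{L^q_t(B^\sigma)}\le \|f\|_{\wt{L}^q_t(B^\sigma)}=\sum_{j\in\Z}2^{j\sigma}\|\D_j f\|_{L^q_t(L^2)}$, so it suffices to bound the Chemin--Lerner norms $\|t^s e^{t\D}u_0\|_{\wt{L}^1_t(B^{2s+\f52})}$ and $\|t^s e^{t\D}u_0\|_{\wt{L}^2_t(B^{2s+\f32})}$. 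This reduces the whole statement to controlling, for each fixed $j\in\Z$, the scalar quantity $\|t^s\D_j e^{t\D}u_0\|_{L^q_t(L^2)}$ with $q\in\{1,2\}$.

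For this block estimate I would invoke Lemma \ref{S4lem0}, which yields $\|\D_j e^{t\D}u_0\|_{L^2}\lesssim e^{-ct2^{2j}}\|\D_j u_0\|_{L^2}$ for some $c>0$. Consequently $\|t^s\D_j e^{t\D}u_0\|_{L^q_t(L^2)}^q\lesssim \|\D_j u_0\|_{L^2}^q\int_0^\infty t^{qs}e^{-cqt2^{2j}}\,dt$, and the substitution $\tau=cqt2^{2j}$ evaluates the integral as $\Gamma(qs+1)(cq)^{-(qs+1)}2^{-2j(qs+1)}$. Taking $q$-th roots therefore gives $\|t^s\D_j e^{t\D}u_0\|_{L^q_t(L^2)}\lesssim 2^{-2j(s+\f1q)}\|\D_j u_0\|_{L^2}$; in words, the weight $t^s$ together with the $L^q_t$ integration buys exactly $2(s+\f1q)$ derivatives in the $L^2$ norm.

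It then remains to sum in $j$. For $q=2$, $\sigma=2s+\f32$ one has $2^{j\sigma}2^{-2j(s+\f12)}=2^{\f{j}2}$, and for $q=1$, $\sigma=2s+\f52$ one has $2^{j\sigma}2^{-2j(s+1)}=2^{\f{j}2}$; in both cases the frequency weight collapses to $2^{\f{j}2}$, so each Chemin--Lerner norm is bounded by $\sum_{j\in\Z}2^{\f{j}2}\|\D_j u_0\|_{L^2}=\|u_0\|_{B^{\f12}}$, which is exactly \eqref{S4eq17}. The computation is essentially routine; the only point deserving care is that \eqref{S4eq17} involves \emph{homogeneous} Besov norms, so the sum runs over all $j\in\Z$ and, for low frequencies, the time integral over the full half-line $\R^+$ produces the large factor $2^{-2j(qs+1)}$. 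This is harmless because it is compensated exactly by the weight $2^{j\sigma}$: the product always reduces to $2^{\f{j}2}$, and convergence of the resulting $\ell^1(\Z)$ series is guaranteed by the hypothesis $u_0\in B^{\f12}$. Thus there is no genuine analytic obstacle here, only the bookkeeping of matching the target indices $2s+\f32$ and $2s+\f52$ against the gain $2(s+\f1q)$.
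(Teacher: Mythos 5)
Your proof is correct and is essentially the paper's own argument: the paper likewise reduces \eqref{S4eq17} via the block decomposition and Minkowski's inequality to the frequency-localized bound $\|t^s\D_j e^{t\D}u_0\|_{L^q_t(L^2)}\lesssim 2^{-2j\left(s+\f1q\right)}\|\D_ju_0\|_{L^2}$ obtained from Lemma \ref{S4lem0}, with the weights collapsing to $2^{\f{j}2}$ and the sum giving $\|u_0\|_{B^{\f12}}$. The only cosmetic difference is that you evaluate the time integral exactly via the Gamma function, whereas the paper absorbs the polynomial weight $(t')^s2^{2js}$ into the exponential decay before integrating; both yield the same gain.
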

  \begin{proof}     In view of Definition \ref{defbesov} and Lemma \ref{S4lem0}, we have
  \beno
  \begin{split}
   \bigl\|t^s e^{t\D} u_0\bigr\|_{L^1_t(B^{2s+\f52})}\leq &C\sum_{j\in\Z}\int_0^t(t')^s2^{j\left(2s+\f52\right)} 2^{-ct'2^{2j}}\|\D_j u_0\|_{L^2}\,dt'\\
   \leq &C\sum_{j\in\Z}2^{\f{5j}2}\int_0^t 2^{-ct'2^{2j}}\,dt' \|\D_j u_0\|_{L^2}\\
    \leq &C\sum_{j\in\Z}2^{\f{j}2}\|\D_j u_0\|_{L^2}  \leq C \|u_0\|_{B^{\f12}}.
  \end{split}
  \eeno
 While we get, by applying Minkowsky's inequality, that
 \beno
 \begin{split}
 \bigl\|t^s e^{t\D} u_0\bigr\|_{L^2_t(B^{2s+\f32})}\leq &\sum_{j\in\Z}2^{j\left(2s+\f32\right)}\Bigl(\int_0^t (t')^{2s}\|\D_j e^{t\D}u_0\|_{L^2}^2\,dt'\Bigr)^{\f12}\\
 \leq & \sum_{j\in\Z}2^{j\left(2s+\f32\right)}\Bigl(\int_0^t (t')^{2s} e^{-ct2^{2j}}\,dt'\Bigr)^{\f12}  \| \D_ju_0\|_{L^2}\\
  \leq & \sum_{j\in\Z}2^{\f{j}2} \| \D_ju_0\|_{L^2}  \leq C \|u_0\|_{B^{\f12}}.
 \end{split}
 \eeno
  This completes the proof of the lemma.
  \end{proof}

 \begin{proof}[Proof of Proposition \ref{S4prop2}]    By multiplying \eqref{S4eq16} by $t$ and then applying Gronwall's inequality, we obtain
 \beno
 \begin{split}
 \|\sqrt{t}\p_tv_j&\|_{L^\infty_t(L^2)}^2+\|\na\p_tv_j\|_{L^2_t(L^2)}^2 \leq C\exp\Bigl(C\bigl(\|u\|_{L^2_t(B^{\f32})}^2+
 \|t^{\f14}u_t\|_{L^2_t(L^2)}^2\\
 &+\|e^{t\D}u_0\|_{L^2_t(B^{\f32})}^2+\|e^{t\D}u_0\|_{L^1_t(B^{\f52})}
 +\|\sqrt{t}e^{t\D}u_0\|_{L^1_t(B^{\f72})} \bigr)\Bigr)\\
 &\times\Bigl(\|\sqrt{\r}\p_tv_j\|_{L^2_t(L^2)}^2 + \bigl\|t^{\f12}\na\p_t e^{t\D}\D_ju_0^\h\bigr\|_{L^2_t(L^2)}^2+\bigl[\bigl\|t^{\f12}e^{t\D}u_0\bigr\|_{L^2_t(B^{\f52})}^2\\
 &\ +\bigl(1+\bigl(\|t^{\f12} v\|_{L^\infty_t(B^{\f32})}+  \bigl\|t^{\f12}e^{t\D}u_0\bigr\|_{L^\infty_t(B^{\f32})}\bigr)\|t^{\f14}u_t\|_{L^2_t(L^2)}^2
 +
 \bigl\|t^{\f12}e^{t\D}u_0\bigr\|_{L^1_t(B^{\f72})}\\
 &\ +\|u\|_{L^2_t(L^\infty)}^2\bigl(\bigl\|t^{\f12} e^{t\D}u_0\bigr\|_{L^2_t(B^{\f32})}^2+\|t^{\f12}u\|_{L^\infty_t(L^\infty)}^2\bigr)\bigr]
 \|\na v_j\|_{L^\infty_t(L^2)}^2\\
 &\ + \bigl(
  \bigl\|t^{\f12}e^{t\D}u_0\bigr\|_{L^\infty_t(B^{\f32})}\|\na e^{t\D}\D_ju_0^\h\|_{L^\infty_t(L^2)}+\|a_0\|_{L^\infty}^2 \bigl\|t^{\f12}\na^2 e^{t\D}\D_ju_0^\h\bigr\|_{L^\infty_t(L^2)} \bigr)\\
 &\  \times\|t^{\f14}u_t\|_{L^2_t(L^2)}^2\|\na v_j\|_{L^\infty_t(L^2)}+\|a_0\|_{L^\infty}^2\bigl(\|u\|_{L^2_t(L^\infty)}^2\bigl\|t^{\f12}\D e^{t\D}\D_ju_0\bigr\|_{L^\infty_t(L^2)}^2\\
  &\ +\bigl\|t^{\f12}\D\p_te^{t\D}\D_ju_0\bigr\|_{L^1_t(L^2)}^2
  + \bigl\|t^{\f12}\na\D e^{t\D}\D_ju_0\|_{L^2_t(L^2)}^2 \bigr)
 +\bigl( \bigl\|t^{\f12}e^{t\D}u_0\bigr\|_{L^2_t(B^{\f52})}^2
 \\
 &\ +\bigl\|t^{\f12}\p_te^{t\D}u_0^3\bigr\|_{L^1_t(B^{\f32})}+\|u\|_{L^2_t(L^\infty)}^2\bigl\|t^{\f12}e^{t\D}u_0\bigr\|_{L^\infty_t(B^{\f32})}^2  \bigr) \|\na e^{t\D}\D_ju_0^\h\|_{L^\infty_t(L^2)}^2\\
 &\ +\bigl(1+\bigl\|t^{\f12} e^{t\D}u_0\bigr\|_{L^\infty_t(B^{\f32})}^2\bigr)\|\na^2 e^{t\D}\D_ju_0^\h\|_{L^2_t(L^2)}^2 \Bigr).
 \end{split}
 \eeno
 Let us denote
 \beq \label{S4eq19}
 T_3^\star\eqdefa \bigl\{\ t>0:\     \|t^{\f12} v\|_{L^\infty_t(B^{\f32})}\leq 1\ \bigr\}.
 \eeq
 In view of \eqref{S4eq8} and \eqref{S4eq12}, we get, by a similar derivation of \eqref{S3eq12}, that
 \beno
 \|t^{\f14}v_t\|_{L^2_t(L^2)}\leq C\eta,
 \eeno
 which together with Lemma \ref{S4lem4} ensures that
 \beq \label{S4eq19a}
 \begin{split}
 \|t^{\f14}u_t\|_{L^2_t(L^2)}\leq &\|t^{\f14} \p_te^{t\D}u_0\|_{L^2_t(L^2)}+\|t^{\f14}v_t\|_{L^2_t(L^2)}\\
 \leq &\|t^{\f14} e^{t\D}u_0\|_{L^2_t(B^2)}+\|t^{\f14}v_t\|_{L^2_t(L^2)}\leq C\bigl(\eta+\|u_0\|_{B^{\f12}}\bigr).
 \end{split}
 \eeq
 Then we deduce from Lemmas \ref{S4lem0}, \ref{S4lem2} and  \ref{S4lem4} that
\beq \label{S4eq19}
\|\sqrt{t}\p_tv_j\|_{L^\infty_t(L^2)}^2+\|\sqrt{t}\na\p_tv_j\|_{L^2_t(L^2)}^2 \leq C\eta
  d_j^22^{j} \eeq
for $t\leq T^\star_3$ and $\eta$ given by \eqref{small1}.

By virtue of \eqref{S4eq9a}, \eqref{S4eq19} and Lemma \ref{S4lem0}, we infer
\beq\label{S4eq19b}
\begin{split}
\|\sqrt{t}\na^2v_j\|_{L^\infty_t(L^2)}\leq C\Bigl(&\bigl(\|\sqrt{t}v\|_{L^\infty_t(L^\infty)}
+\|\sqrt{t}e^{t\D}u_0\|_{L^\infty_t(B^{\f32})}\bigr)
   \|\na v_j\|_{L^\infty_t(L^2)}\\
   &+\|\sqrt{t}\p_tv_j\|_{L^\infty_t(L^2)}+\|\sqrt{t}e^{t\D}u_0\|_{L^\infty_t(B^{\f32})} \|\na e^{t\D}\D_ju_0^\h\|_{L^\infty_t(L^2)}\\
   &+\|a_0\|_{L^\infty}\|\sqrt{t}\D e^{t\D}\D_j u_0\|_{L^\infty_t(L^2)}\Bigr)\\
 \leq C &
 \eta d_j^22^{j}
\end{split}
   \eeq for $t\leq T^\star_3$ and $\eta$ given by \eqref{small1}.

Combining \eqref{S4eq12} with \eqref{S4eq19} and \eqref{S4eq19b}, we achieve \eqref{S4eq18} for  $t\leq T^\star_3.$
Then under the assumption of \eqref{small1}, we have
\beno
   \|t^{\f12} v\|_{L^\infty_t(B^{\f32})}\leq \f12  \quad\mbox{for}\ \ t\leq T^\star_3
   \eeno
as long as $\e_0$ in \eqref{small1} is small enough. This contradict with the definition of $T^\star_3$ determined by \eqref{S4eq19}.
 This  in turn shows that $T^\star_3=\infty,$ and we complete the proof of Proposition \ref{S4prop2}.
 \end{proof}

 Exactly along the same line to the proof of \eqref{S4eq19} and \eqref{S4eq19b}, we have the following corollary:

 \begin{col}\label{S4col2}
 {\sl Under the assumption of proposition \ref{S4prop2}, for any $t>0,$ we have
 \beq \label{S4eq20}
 \|{t}(\p_tv_j,\na^2v_j)\|_{L^\infty_t(L^2)}^2+\|t\na\p_tv_j\|_{L^2_t(L^2)}^2 \leq C
  \eta d_j^22^{-j} \eeq  for $\eta$ given by \eqref{small1}   }
 \end{col}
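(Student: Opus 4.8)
The plan is to upgrade the $\sqrt t$-weighted estimate \eqref{S4eq19} to a $t$-weighted one by raising the time power by one, i.e.\ by multiplying the basic differential inequality \eqref{S4eq16} of Lemma \ref{S4lem3} by $t^2$ rather than by $t$. Writing $t^2\f{d}{dt}\|\sqrt{\r}\p_tv_j\|_{L^2}^2=\f{d}{dt}\bigl(t^2\|\sqrt{\r}\p_tv_j\|_{L^2}^2\bigr)-2t\|\sqrt{\r}\p_tv_j\|_{L^2}^2$, integrating over $[0,t]$ and applying Gronwall's inequality will produce on the left-hand side exactly $\|t\p_tv_j\|_{L^\infty_t(L^2)}^2+\|t\na\p_tv_j\|_{L^2_t(L^2)}^2$. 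The Gronwall multiplier is the same exponential that appeared in the proof of \eqref{S4eq19}, and it is bounded by a constant by virtue of Lemma \ref{S4lem4} together with \eqref{S4eq19a}; hence no new continuity argument is needed, since Proposition \ref{S4prop2} has already been established for every $t>0$.

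The only genuinely new contribution is the remainder term $2t\|\sqrt{\r}\p_tv_j\|_{L^2}^2$ coming from the product rule: integrated in time it equals $2\|\sqrt{t}\sqrt{\r}\p_tv_j\|_{L^2_t(L^2)}^2$, which is controlled by the $\sqrt t$-weighted bound \eqref{S4eq12} of Corollary \ref{S4col1} and already carries the desired factor $\eta d_j^2 2^{-j}$. Every other source term on the right of \eqref{S4eq16}, once multiplied by $t^2$, will be estimated term by term using the $\sqrt t$-weighted quantities at our disposal: the factors $\|\sqrt{t}\na v_j\|_{L^\infty_t(L^2)}$ and $\|\sqrt{t}\p_tv_j\|_{L^\infty_t(L^2)}$ from \eqref{S4eq12} and \eqref{S4eq19}, the heat-flow smoothing bounds of Lemma \ref{S4lem0}, and the time-integrated heat estimates of Lemma \ref{S4lem4}. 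I expect each such term to reproduce the factor $\eta d_j^2 2^{-j}$.

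The point requiring the most care is the appearance of negative powers of $t$ in \eqref{S4eq16} (the coefficients $t^{-1}$ and $t^{-\f12}$ multiplying $\|\na v_j\|_{L^2}^2$ and the analogous heat-flow terms): these are exactly compensated by the extra factor $t^2$, turning, for instance, $t^2\cdot t^{-1}\|\na v_j\|_{L^2}^2$ into $t\|\na v_j\|_{L^2}^2=\|\sqrt{t}\na v_j\|_{L^2}^2$, which is integrable in time and bounded through \eqref{S4eq12}; the remaining $t^{-\f12}$ heat terms are absorbed by Lemma \ref{S4lem4}. The main obstacle is therefore purely one of bookkeeping: verifying that after multiplication by $t^2$ none of the many terms of \eqref{S4eq16} is singular at $t=0$ and that each carries the correct power $2^{-j}$.

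Finally, to recover the $\na^2v_j$ part of the statement I would argue exactly as for \eqref{S4eq19b}: starting from the elliptic reformulation \eqref{S4eq9a} of the $v_j$-equation, multiply by $t$ and take the $L^\infty_t(L^2)$ norm, so that $\|t\na^2v_j\|_{L^\infty_t(L^2)}$ is bounded by $\|t\p_tv_j\|_{L^\infty_t(L^2)}$ (just obtained) together with the lower-order transport and forcing contributions, each estimated through \eqref{S4eq12}, \eqref{S4eq19} and Lemma \ref{S4lem0}. Combining this with the Gronwall output yields \eqref{S4eq20}.
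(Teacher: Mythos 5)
Your proposal is correct and coincides with the paper's intended argument: the paper gives no written proof of Corollary~\ref{S4col2}, stating only that it follows ``exactly along the same line'' as \eqref{S4eq19} and \eqref{S4eq19b}, which is precisely your route --- multiply \eqref{S4eq16} by $t^2$ instead of $t$, absorb the product-rule remainder $2\|\sqrt{t}\sqrt{\r}\p_tv_j\|_{L^2_t(L^2)}^2$ via \eqref{S4eq12}, reuse the same (constant-bounded, via Lemma~\ref{S4lem4} and \eqref{S4eq19a}) Gronwall exponential, and recover $\|t\na^2v_j\|_{L^\infty_t(L^2)}$ from the elliptic bound \eqref{S4eq9a}, exactly mirroring how Corollary~\ref{S3col2} upgrades \eqref{S3eq17} in Section~\ref{Sect3}. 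Your bookkeeping observation is also right, with the one small precision that the $t^{-1}$ and $t^{-\f12}$ terms in \eqref{S4eq16} come paired with the time-integrable factor $\|t^{\f14}u_t\|_{L^2}^2$, so after multiplication by $t^2$ they are estimated as $\|t^{\f14}u_t\|_{L^2_t(L^2)}^2\|\sqrt{t}\na v_j\|_{L^\infty_t(L^2)}^2$ (an $L^\infty_t$ use of \eqref{S4eq12}), rather than by time-integrating $\|\sqrt{t}\na v_j\|_{L^2}^2$ alone.
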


Now let us turn to the proof of Proposition \ref{S4prop3}.

 \begin{proof}[Proof of Proposition \ref{S4prop3}]  The proof of this proposition basically follows from that of Proposition \ref{S3prop3}.
 By applying the operator $D_t=\p_t+u\cdot\na $  to the $v_j$ equation of \eqref{S4eq3},
 we get, by a similar derivation of \eqref{S3eq19}, that
  \beq \label{S4eq22}
 \begin{split}
 \r D_t^2v_j-\D D_tv_j&+\na D_t\pi_j=-\D u\cdot\na v_j-2\sum_{i=1}^3\p_iu\cdot\na\p_i v_j+\na u\cdot\na\pi_j\\
 &\qquad-\r D_t\bigl(v_j\cdot\na e^{t\D}u_0\bigr)+\r D_tF_j+\r{a}D_t\D e^{t\D}\D_ju_0\eqdefa G_j.
 \end{split}
 \eeq
 Then along the same line to the proof of \eqref{S3eq21}, we write
 \beq \label{S4eq23}                                                                                \begin{split}
 \f12\f{d}{dt}\|t\na D_tv_j(t)\|_{L^2}^2+\|t\sqrt{\r}&D_t^2v_j\|_{L^2}^2
  \leq \|\sqrt{t}\na D_tv_j\|_{L^2}^2+t^2\int_{\R^3} G_j |  D_t^2v_j\,dx\\
  &-t^2\int_{\R^3} \na D_tv_j | [\na; D_t]D_tv_j\,dx-t^2\int_{\R^3}\na D_t\pi_j |  D_t^2v_j\,dx.
 \end{split}
 \eeq
 We first deal with the estimate of $\|tG_j\|_{L^2_t(L^2)}.$
 Notice from \eqref{S4eq3} that
 \beno
 \left\{
 \begin{array}{ll}
 -\Delta v_j+\na \pi_j=-\r\bigl(\p_tv_j+u\cdot\na v_j+v_j\cdot\na e^{t\D} u_0-F_j- a\D e^{t\D}\D_j u_0\bigr),\\
 \dive v_j=0,
 \end{array}
 \right.
 \eeno
 from which, we deduce from the classical theory on Stokes operator that
 \beno
 \begin{split}
 \|\sqrt{t}(\na^2v_j,\na\pi_j)\|_{L^2_t(L^6)}\leq C\Bigl(& \|\sqrt{t}\na\p_tv_j\|_{L^2_t(L^2)}
 +\|u\|_{L^2_t(L^\infty)}\|\sqrt{t}\na^2v_j\|_{L^\infty_t(L^2)} \\
 &+\|\na v_j\|_{L^\infty_t(L^2)}\|\sqrt{t}e^{t\D}u_0\|_{L^2_t(B^{\f52})}+\|\sqrt{t}F_j\|_{L^2_t(L^6)}\\
 &+\|a_0\|_{L^\infty}\|\sqrt{t}\na\D e^{t\D}\D_j u_0\|_{L^2_t(L^2)}\Bigr).
 \end{split}
 \eeno
 Whereas it follows from the law of product, \eqref{S3eq0}, that
 \beno
 \begin{split}
 \|\sqrt{t}F_j\|_{L^2_t(L^6)}\leq C\|\sqrt{t}F_j\|_{L^2_t(\dH^1)}\leq &C\bigl(\|e^{t\D}u_0\|_{L^2_t(B^{\f32})}\|\sqrt{t}\na^2 e^{t\D}\D_j u_0^\h\|_{L^\infty_t(L^2)}  \\
 &+  \|\sqrt{t}\na_\h e^{t\D}u_0^3 \|_{L^2_t(B^{\f32})}\|\na e^{t\D}\D_j u_0^\h\|_{L^\infty_t(L^2)}\bigr)  \\
 \leq &Cd_j2^{\f{j}2} \|u_0\|_{B^{\f12}} \|u_0^\h\|_{B^{\f12}}.
 \end{split}
 \eeno
 As a result, we deduce from Lemma \ref{S4lem0} and \eqref{S4eq19},  \eqref{S4eq19b} that
 \beq\label{S4eq24}
\|\sqrt{t}(\na^2v_j,\na\pi_j)\|_{L^2_t(L^6)}\leq C \sqrt{\eta} d_j2^{\f{j}2}
\eeq for $\eta$ given by \eqref{small1}.
Then we get, by a similar derivation \eqref{S3eq24}, that
\beno
\bigl\|t\bigl(\D u\cdot\na v_j+2\sum_{i=1}^3\p_iu\cdot\na\p_i v_j-\na u\cdot\na\pi_j\bigr)\bigr\|_{L^2_t(L^2)}\leq
C\sqrt{\eta} d_j2^{\f{j}2}.
\eeno

Note that
\beno
\begin{split}
\bigl\|tD_t\bigl(v_j\cdot\na e^{t\D}u_0\bigr)\bigr\|_{L^2_t(L^2)}
\leq & \| \sqrt{t}D_tv_j\|_{L^\infty_t(L^2)}\|\sqrt{t}e^{t\D}u_0 \|_{L^2_t(B^{\f52})}\\
&+\|\na v_j\|_{L^\infty_t(L^2)}\|tD_t\na e^{t\D}u_0\|_{L^2_t(B^{\f12})}.
\end{split}
\eeno
Due to Lemma \ref{S4lem4}, we have
\beq \label{S4eq26}
\begin{split}
\|tD_t\na e^{t\D}u_0\|_{L^2_t(B^{\f12})}\leq & C\bigl(\|t\p_te^{t\D}u_0\|_{L^2_t(B^{\f32})}+\|\sqrt{t}u\|_{L^\infty_t(B^{\f32})}\|\sqrt{t}e^{t\D}u_0\|_{L^2_t(B^{\f52})}\\
\lesssim &\|u_0\|_{B^{\f12}}\bigl(1+\|u_0\|_{B^{\f12}}\bigr),
\end{split}
\eeq
 Hence by virtute of Lemmas  \ref{S4lem2}, we obtain
 \beno
 \bigl\|tD_t\bigl(v_j\cdot\na e^{t\D}u_0\bigr)\bigr\|_{L^2_t(L^2)}\leq C\sqrt{\eta} d_j2^{\f{j}2}.  \eeno
Similarly, we deduce from Lemma \ref{S4lem0} that
\beno
\begin{split}
\bigl\|t\r aD_t\D e^{t\D}\D_ju_0\|_{L^2_t(L^2)}\leq &C\|a_0\|_{L^\infty}\bigl(
\bigl\|t\D\p_te^{t\D}\D_ju_0\bigr\|_{L^2_t(L^2)}\\
&\qquad\qquad\quad+\|u\|_{L^2_t(L^\infty)} \bigl\|t\na^3 e^{t\D}\D_ju_0\bigr\|_{L^\infty_t(L^2)} \bigr)\\
\leq  & Cd_j2^{\f{j}2}\|a_0\|_{L^\infty} \|u_0\|_{B^{\f12}}\bigl(1+\|u_0\|_{B^{\f12}}\bigr).
\end{split}
\eeno
Finally, we have
\beno
\begin{split}
\|tD_tF_j\|_{L^2_t(L^2)}\leq &\|tD_te^{t\D}u_0\|_{L^2_t(B^{\f32})}\bigl\|\na e^{t\D}\D_ju_0^\h\bigr\|_{L^\infty_t(L^2)}\\
&+\|e^{t\D}u_0\|_{L^2_t(B^{\f32})}\bigl\|tD_t\na e^{t\D}\D_ju_0^\h\bigr\|_{L^\infty_t(L^2)} \\
&+\|\sqrt{t}\na_\h e^{t\D}u_0^3\|_{L^2_t(B^{\f32})}\bigl\|\sqrt{t}D_t e^{t\D}\D_ju_0^\h\bigr\|_{L^\infty_t(L^2)}
\\
&+\|{t}D_t\na_\h e^{t\D}u_0^3\|_{L^2_t(B^{\f12})}\bigl\|\na e^{t\D}\D_ju_0^\h\bigr\|_{L^\infty_t(L^2)}.
\end{split}
\eeno
Whereas it follows from Lemma \ref{S4lem4} that
\beno
\begin{split}
\|tD_te^{t\D}u_0\|_{L^2_t(B^{\f32})}\leq &\|t\p_te^{t\D}u_0\|_{L^2_t(B^{\f32})}+\|\sqrt{t}u\|_{L^\infty_t(B^{\f32})}\|\sqrt{t}e^{t\D}u_0\|_{L^2_t(B^{\f52})}\\
\leq &C\|u_0\|_{B^{\f12}}\bigl(1+\|u_0\|_{B^{\f12}}\bigr).
\end{split}
\eeno
And it follows from Lemma \ref{S4lem0} that
\beno
\begin{split}
\bigl\|tD_t\na e^{t\D}\D_ju_0^\h\bigr\|_{L^\infty_t(L^2)}\leq &\bigl\|t\p_t\na e^{t\D}\D_ju_0^\h\bigr\|_{L^\infty_t(L^2)}
+\|\sqrt{t}u\|_{L^\infty_t(B^{\f32})}\bigl\|\sqrt{t}\na^2 e^{t\D}\D_ju_0^\h\bigr\|_{L^\infty_t(L^2)}\\
\leq &Cd_j2^{\f{j}2} \bigl(1+\|u_0\|_{B^{\f12}}\bigr) \|u_0^\h\|_{B^{\f12}},
\end{split}
\eeno
and
\beno
\begin{split}
\bigl\|\sqrt{t}D_t e^{t\D}\D_ju_0^\h\bigr\|_{L^\infty_t(L^2)}\leq &\bigl\|\sqrt{t}\p_t e^{t\D}\D_ju_0^\h\bigr\|_{L^\infty_t(L^2)}
+\|\sqrt{t}u\|_{L^\infty_t(B^{\f32})}\bigl\|\na e^{t\D}\D_ju_0^\h\bigr\|_{L^\infty_t(L^2)}\\
\leq &Cd_j2^{\f{j}2} \bigl(1+\|u_0\|_{B^{\f12}}\bigr) \|u_0^\h\|_{B^{\f12}}.
\end{split}
\eeno
Hence we obtain
\beno
\|tD_tF_j\|_{L^2_t(L^2)}
\leq  C d_j2^{\f{j}2}\bigl(1+\|u_0\|_{B^{\f12}}\bigr) \|u_0\|_{B^{\f12}} \|u_0^\h\|_{B^{\f12}}.\eeno

Therefore, by summing up the above estimates, we achieve
\beq\label{S4eq25}
\|tG_j\|_{L^2_t(L^2)}\leq C \sqrt{\eta}  d_j2^{\f{j}2}.
\eeq
With the estimate \eqref{S4eq25}, we can follow the proof of Proposition \ref{S3prop3} to prove that
\beq \label{S4eq30}
\|t\na D_tv_j\|_{L^\infty_t(L^2)}^2+\bigl\|t(D_t^2v_j, \na^2D_tv_j, \na D_t\pi_j)\bigr\|_{L^2_t(L^2)}^2\leq  C \eta  d_j^22^{{j}}.
\eeq

Thanks to \eqref{S4eq20} and \eqref{S4eq30}, we deduce  \eqref{S4eq21} via a similar derivation of
\eqref{S3eq7}. This completes the proof of the proposition.
\end{proof}

\medskip

 \setcounter{equation}{0}
  \section{The proof of Theorem \ref{thm3}}\label{Sect5}

In this section, we shall modify the proof of Theorem \ref{thm1} to prove Theorem \ref{thm3}.  Let
us first present the proof of Proposition \ref{S5prop1}.

\begin{proof}[Proof of Proposition \ref{S5prop1}] Let $(\r,u)$ be a smooth enough solution of
 \eqref{S1eq1} on $[0,T^\ast[.$ We  construct $(u_j,\na\pi_j)$ via \eqref{S3eq1}. Then there holds \eqref{S3eq1a}.
With $u_0\in\dH^{\f12+2\ga},$ we  deduce from \eqref{S3eq2} that
\beq \label{S5eq2}
\|u_j\|_{L^\infty(L^2)}^2+\|\na u_j\|_{L^2_t(L^2)}^2\leq C\|\D_j u_0\|_{L^2}^2\leq Cc_j^22^{-2j\left(\f12+2\ga\right)}\|u_0\|_{\dH^{\f12+2\ga}}^2.
\eeq
Here and in the rest of this section, we always denote $(c_j)_{j\in\Z}$ to be a generic element of $\ell^2(\Z)$ so that
$\sum_{j\in\Z}c_j^2=1.$

Whereas thanks to \eqref{S3eq2a} and \eqref{S3eq3}, we infer that there exists a positive constant $c$ so that
\beno \f{d}{dt}\|\na u_j(t)\|_{L^2}^2+2c\bigl\|(\p_tu_j,\na^2 u_j,\na\pi_j)\bigr\|_{L^2}^2\leq C\|u\cdot\na u_j\|_{L^2}^2\quad\mbox{for}\ t<T^\ast.
\eeno
Yet due to $\ga\in ]0,1/4],$  it follows from the law of product in homogeneous Sobolev spaces, \eqref{S3eq0}, that
\beq\label{S5eq16}
\begin{split}
\|u\cdot\na u_j\|_{L^2}\leq &C\|u\|_{\dot H^{\f12+2\ga}}\|\na u_j\|_{\dot H^{1-2\ga}}\\
\leq& C\|u\|_{\dot H^{\f12+2\ga}}\|\na u_j\|_{L^2}^{2\ga}\|\na^2 u_j\|_{L^2}^{1-2\ga}.
\end{split}
\eeq
Then applying Young's inequality gives
\beno
C\|u\cdot\na u_j\|_{L^2}^2
\leq \f{C}\ga\|u\|_{\dot H^{\f12+2\ga}}^{\f1\ga}\|\na u_j\|_{L^2}^{2}+{c}\|\na^2 u_j\|_{L^2}^{2}.
\eeno
Thus we achieve
\beq \label{S5eq7} \f{d}{dt}\|\na u_j(t)\|_{L^2}^2+c\bigl\|(\p_tu_j,\na^2 u_j,\na\pi_j)\bigr\|_{L^2}^2\leq \f{C}\ga\|u\|_{\dot H^{\f12+2\ga}}^{\f1\ga}\|\na u_j\|_{L^2}^{2}
\quad\mbox{for}\ t<T^\ast.
\eeq
Applying Gronwall's inequality leads to
\beq \label{S5eq1}
 \begin{split}
 \|\na u_j\|_{L^\infty_t(L^2)}^2+&c\bigl\|(\p_tu_j,\na^2 u_j,\na\pi_j)\bigr\|_{L^2_t(L^2)}^2\\
 \leq &C\|\na\D_j u_0\|_{L^2}^2\exp\Bigl(\f{C}\ga\int_0^t\|u\|_{\dot H^{\f12+2\ga}}^{\f1\ga}\,dt'\Bigr)\\
 \leq &Cc_j^22^{2j(\f12-2\gamma)}\|u_0\|_{\dH^{\f12+2\ga}}^2\exp\Bigl(\f{C}\ga\int_0^t\|u\|_{\dot H^{\f12+2\ga}}^{\f1\ga}\,dt'\Bigr)\quad\mbox{for}\ t<T^\ast.
 \end{split}
 \eeq
 In view of \eqref{S5eq2} and \eqref{S5eq1}, for $t<t^\ast,$ we deduce by a similar derivation of \eqref{S3eq7} that
 \beq\label{S5eq18}
    \begin{split}
    \|\D_ju\|_{L^\infty_t(L^2)}+\|\na\D_ju\|_{L^2_t(L^2)}\lesssim & \sum_{j'>j}\bigl(\|u_j\|_{L^\infty_t(L^2)}+\|\na u_j\|_{L^2_t(L^2)}\bigr)\\
    &+2^{-j}\sum_{j'\leq j}\bigl(\|\na u_j\|_{L^\infty_t(L^2)}+\|\na^2 u_j\|_{L^2_t(L^2)}\bigr)\\
    \lesssim & c_j2^{-j\left(\f12+2\ga\right)}\|u_0\|_{\dH^{\f12+2\ga}}\exp\Bigl(\f{C}\ga\int_0^t\|u\|_{\dot H^{\f12+2\ga}}^{\f1\ga}\,dt'\Bigr),
    \end{split}
    \eeq
which implies  for $t<t^\ast$
    \beq \label{S5eq3}
    \|u\|_{\wt{L}^\infty_t(\dH^{\f12+2\ga})}^2+\|\na u\|_{L^2_t(\dH^{\f12+2\ga})}^2\leq                                                                         C \|u_0\|_{\dH^{\f12+2\ga}}^2\exp\Bigl(\f{C}\ga\int_0^t\|u\|_{\dot H^{\f12+2\ga}}^{\f1\ga}\,dt'\Bigr).
    \eeq
Let us denote
\beq \label{S5eq4}
T^\star_4\eqdefa \bigl\{\  t< T^\ast\     \  \|u\|_{\wt{L}^\infty_t(\dH^{\f12+2\ga})}\leq 2C  \|u_0\|_{\dH^{\f12+2\ga}}  \ \bigr\}.
\eeq
Then for $t\leq \min\Bigl(T^\star_4, \f{\gamma \ln\bigl(\f{3}{C}\bigr)}{2^{\f1{\gamma}}C^{1+\f1\ga}\|u_0\|_{\dot H^{\f12+2\ga}}^{\f1\ga} }\Bigr),$
we deduce from \eqref{S5eq3} that
\beq \label{S5eq5} \|u\|_{\wt{L}^\infty_t(\dH^{\f12+2\ga})}^2+\|\na u\|_{L^2_t(\dH^{\f32+2\ga})}^2\leq
 {3C} \|u_0\|_{\dH^{\f12+2\ga}}^2.
\eeq
This in turn shows that
\beno
T^\star\geq \f{\gamma \ln\bigl(\f{3}{C}\bigr)}{2^{\f1{\gamma}}C^{1+\f1\ga}\|u_0\|_{\dot H^{\f12+2\ga}}^{\f1\ga} }\eqdefa T_\ga,
\eeno
and for $t\leq T_\ga,$ there holds \eqref{S5eq11}. This completes the proof of Proposition \ref{S5prop1}.
\end{proof}

\begin{rmk}
It follows from the definition of $T_\ga$ that
\beq \label{S5eq13}
\exp\Bigl(\f{C}\ga\int_0^{T_\ga}\|u(t)\|_{\dot H^{\f12+2\ga}}^{\f1\ga}\,dt\Bigr)\leq 4.
\eeq
\end{rmk}

\begin{proof}[Proof of Proposition \ref{S5prop2}]
Due to the energy conservation law, we have
 \beno
  \f12\|\sqrt{\r}u\|_{L^\infty_t(L^2)}^2+\|\na u\|_{L^2_t(L^2)}^2\leq \f12 \|\sqrt{\r_0}u_0\|_{L^2}^2 \quad\mbox{for}\ \ t<T^\ast,
 \eeno
 from which and \eqref{S5eq11}, we obtain
 \beq \label{S5eq6}
 \|u\|_{L^2_t(B^{\f32})}\leq C\|\na u\|_{L^2_t(L^2)}^{\f{4\ga}{1+4\ga}}   \|\na u\|_{L^2_t(\dH^{\f12+2\ga})}^{\f{1}{1+4\ga}}\leq C\|u_0\|_{H^{\f12+2\ga}}\quad\mbox{for}\ \ t\leq T_\ga.
 \eeq
On the other hand,  by multiplying \eqref{S5eq7} by $t$ and then applying Gronwall's inequality, we get
\beno
\begin{split}
\|\sqrt{t}\na u_j\|_{L^\infty_t(L^2)}^2+&c\bigl\|\sqrt{t}(\p_tu_j, \na^2 u_j, \na\pi_j)\bigr\|_{L^2_t(L^2)}^2\\
\leq &C\|\na u_j\|_{L^2_t(L^2)}^2\exp\Bigl(\f{C}\ga\int_0^t\|u\|_{\dot H^{\f12+2\ga}}^{\f1\ga}\,dt'\Bigr).
\end{split}
\eeno
which together with \eqref{S5eq2} and \eqref{S5eq13} implies that for $t\leq T_\ga$
\beq \label{S5eq8}
  \|\sqrt{t}\na u_j\|_{L^\infty_t(L^2)}^2+c\bigl\|\sqrt{t}(\p_tu_j, \na^2 u_j, \na\pi_j)\bigr\|_{L^2_t(L^2)}^2
  \leq Cc_j^22^{-2j\left(\f12+2\gamma\right)}\|u_0\|_{\dH^{\f12+2\ga}}^2. \eeq

 With \eqref{S5eq6}, we get, by a similar derivation of \eqref{S5eq8}, that
 \beno
 \|\p_tu_j\|_{L^2_t(L^2)}\lesssim d_j 2^{\f{j}2}\|u_0\|_{B^{\f12}}\andf
         \|\sqrt{t}\p_tu_j\|_{L^2_t(L^2)}\lesssim d_j 2^{-\f{j}2}\|u_0\|_{B^{\f12}},
         \eeno which implies
 \beq \label{S5eq9}
 \|t^{\f14}u_t\|_{L^2_t(L^2)}\leq C\|u_0\|_{B^{\f12}}\leq C\|u_0\|_{H^{\f12+2\ga}}.
 \eeq
   Then by virtue of \eqref{S3eq17a}, \eqref{S5eq6} and \eqref{S5eq9}, we achieve
  \beq \label{S5eq15}
  \begin{split}
  \|\sqrt{t}\p_tu_j\|_{L^\infty_t(L^2)}^2+\|\sqrt{t}\na\p_tu_j\|_{L^2_t(L^2)}^2\leq& C\exp\Bigl(\int_0^t\bigl(\|u\|_{B^{\f32}}^2+\|t^{\f14}u_t\|_{L^2}^2\bigr)\,dt'\Bigr)
  \\
  \times \bigl(\|&\sqrt{\r}\p_tu_j\|_{L^2_t(L^2)}^2
  +\|t^{\f14}u_t\|_{L^2_t(L^2)}^2\|\na u_j\|_{L^\infty_t(L^2)}^2\bigr)\\
  \leq &   Cc_j^22^{2j\left(\f12-2\gamma\right)}\|u_0\|_{\dH^{\f12+2\ga}}^2.
  \end{split}
  \eeq
Whereas we deduce from \eqref{S3eq3} and \eqref{S5eq16} that
\beno
\begin{split}
\|\na^2u_j\|_{L^2}\leq &C\bigl(\|\p_tu_j\|_{L^2}+\|u\cdot\na u_j\|_{L^2}\bigr)\\
\leq &C\bigl(\|\p_tu_j\|_{L^2}+\|u\|_{\dot H^{\f12+2\ga}}\|\na u_j\|_{L^2}^{2\ga}\|\na^2 u_j\|_{L^2}^{1-2\ga}\bigr)\\
\leq &C\bigl(\|\p_tu_j\|_{L^2}+\|u\|_{\dot H^{\f12+2\ga}}^{\f1{2\ga}}\|\na u_j\|_{L^2}\bigr)+\f12\|\na^2 u_j\|_{L^2}.
\end{split}
\eeno
which together with \eqref{S5eq1} and \eqref{S5eq15} ensures that
\beq\label{S5eq17}
\begin{split}
\|\sqrt{t}\na^2u_j\|_{L^\infty_t(L^2)}\leq & C\bigl(\|\sqrt{t}\p_tu_j\|_{L^\infty_t(L^2)}+\sqrt{T_\ga}\|u\|_{L^\infty_t(\dot H^{\f12+2\ga})}^{\f1{2\ga}}\|\na u_j\|_{L^\infty_t(L^2)}\bigr)\\
 \leq &   Cc_j2^{j\left(\f12-2\gamma\right)}\|u_0\|_{\dH^{\f12+2\ga}}.
  \end{split}
  \eeq
Thanks to \eqref{S5eq8}, \eqref{S5eq15} and \eqref{S5eq17},  we conclude the proof of \eqref{S5eq10} by following the same line
as \eqref{S5eq18}.
\end{proof}

\begin{rmk}\label{S5rmk2}
Thanks to \eqref{S5eq6} and \eqref{S5eq9}, we get, by a similar proof of Corollary \ref{S3col2}, that
\beq \label{S5eq19}
\begin{split}
\|t D_tu_j\|_{L^\infty_t(L^2)}+\|t \na D_tu_j\|_{L^2_t(L^2)}
\leq  Cc_j2^{-j\left(\f{1}2+2\ga\right)}\|u_0\|_{\dH^{\f12+2\ga}},
\end{split}
\eeq
and there holds \eqref{S3eq16} for $t\leq T_\ga.$

Furthermore, by virtue of \eqref{S5eq15} and \eqref{S5eq17}, we get, by a similar derivation of \eqref{S3eq22}, that
\beq
\label{S5eq20}
\bigl\|\sqrt{t}(\na^2u_j,\na\pi_j)\bigr\|_{L^2_t(L^6)}\leq Cc_j2^{j\left(\f{1}2-2\ga\right)}\|u_0\|_{\dH^{\f12+2\ga}}
\quad\mbox{for} \ \ t\leq T_\ga.
\eeq
\end{rmk}

With Remark \ref{S5rmk2}, we can follow the proof of Proposition \ref{S3prop3} to conclude the proof of  Proposition \ref{S5prop3}, which
we omit the details here.

\medskip

\appendix

\setcounter{equation}{0}
\section{Tool box on Littlewood-Paley theory}\label{apA}

For the convenience of the readers, we  recall some basic facts on  Littlewood-Paley theory from \cite{BCD}. Let
\begin{equation}\label{S1eq5}
\Delta_j a\eqdefa \cF^{-1}(\varphi(2^{-j}|\xi|)\widehat{a}),
 \andf  S_j a\eqdefa \cF^{-1}(\chi(2^{-j}|\xi|)\widehat{a}),
\end{equation}
where $\cF a$ and
$\widehat{a}$ denote the Fourier transform of the distribution $a,$
$\chi(\tau)  $ and~$\varphi(\tau)$ are smooth functions such that
\beno
&&\Supp \varphi \subset \Bigl\{\tau \in \R\,/\  \ \frac34 \leq
\tau| \leq \frac83 \Bigr\}\ \andf \  \ \forall
\tau>0\,,\ \sum_{j\in\Z}\varphi(2^{-j}\tau)=1,\\
&&\Supp \chi \subset \Bigl\{\tau \in \R\,/\  \ \ |\tau|  \leq
\frac43 \Bigr\}\quad \ \ \ \andf \  \ \forall
\tau\in\R\,,\  \chi(\tau)+ \sum_{j\geq 0}
\varphi(2^{-j}\tau)=1.
\eeno

\begin{defi}\label{defbesov}
{\sl  Let $(p, r)$ be in~$[1,\infty]^2$ and~$s$ in~$\R$. Let us consider~$u$ in~$\cS_h'(\R^3),$ which means that $u$ is in~$\cS'(\R^3)$ and satisfies~$\ds\lim_{j\to-\infty}\|S_ju\|_{L^\infty}=0$. We set
$$
\|u\|_{\dB^s_{p,r}}\eqdefa\big\|\big(2^{js}\|\Delta_j
u\|_{L^{p}}\big)_{j\in\Z}\bigr\|_{\ell ^{r}(\ZZ)}.
$$
\begin{itemize}
\item
if $s<\frac{3}{p}$ (or $s=\frac{3}{p}$ if $r=1$), we define $
\dB^s_{p,r}(\R^3)\eqdefa \big\{u\in{\mathcal S}_h'(\R^3)\;\big|\; \|u\|_{\dB^s_{p,r}}<\infty\big\}.$
\item
 if $k\in\N$ and if~$\frac{3}{p}+k\leq
\frac{3}{p}+k+1$ (or $s=\frac{3}{p}+k+1$ if $r=1$), then we
fine~$ \dB^s_{p,r}(\R^3)$  as the subset of $u$
~${\mathcal S}_h'(\R^3)$ such that $\partial^\beta u$ belongs to~$
\dB^{s-k}_{p,r}(\R^3)$ whenever $|\beta|=k.$
\end{itemize}}
\end{defi}
We remark that $\dB^s_{2,2}$
coincides with the classical homogeneous Sobolev spaces $\dH^s$.

In  order to obtain a better description of the regularizing effect
of the transport-diffusion equation, we will use Chemin-Lerner type
spaces $\widetilde{L}^{q}_T(B^s_{p,r}(\R^3))$.
\begin{defi}\label{chaleur+}
Let $s\leq\frac{3}{p}$ (respectively $s\in\R$),
$(q,p,r)\in[1,\,+\infty]^3$ and $T\in]0,\,+\infty]$. We define
$\widetilde{L}^{q}_T(B^s_{p\,r}(\R^3))$ as the completion of
$C([0,T];\cS_h(\R^3))$ by the norm
$$
\| f\|_{\widetilde{L}^{q}_T(B^s_{p,r})} \eqdefa
\Big(\sum_{j\in\Z}2^{jrs} \Big(\int_0^T\|\Delta_j\,f(t)
\|_{L^p}^{q}\, dt\Big)^{\frac{r}{q}}\Big)^{\frac{1}{r}}
<\infty,
$$
with the usual change if $r=\infty.$  For short, we just denote this
space by $\widetilde{L}^{q}_T(B^s_{p,r}).$
\end{defi}

We also frequently use the following Lemmas from \cite{BCD}:

\begin{lem}[Corollary 5.5 of \cite{BCD}]\label{S3lem0}
{\sl Let $(s_1,s_2)\in \left]-3/2, 3/2\right[^2,$ a constant $C$ exists such that if $s_1+s_2$ is positive, then we have
 \beq \label{S3eq0}
 \|fg\|_{\dB^{s_1+s_2-\f32}_{2,1}}\leq C\|f\|_{\dH^{s_1}}\|g\|_{\dH^{s_2}}.
\eeq
}
\end{lem}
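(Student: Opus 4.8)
The plan is to prove this product law by Bony's homogeneous paraproduct decomposition. I would write
\beno
fg \;=\; T_fg + T_gf + R(f,g),
\eeno
where the paraproduct is $T_fg\eqdef\sum_{j\in\Z}S_{j-1}f\,\D_jg$ and the remainder is $R(f,g)\eqdef\sum_{j\in\Z}\sum_{|\nu|\le1}\D_jf\,\D_{j+\nu}g$, and then estimate each of the three contributions separately in $\dB^{s_1+s_2-\f32}_{2,1}$. The two structural facts I would use throughout are Bernstein's inequality (to trade $L^2$ regularity for $L^\infty$ control, or to pass from $L^1$ to $L^2$, each costing a factor $2^{3j/2}$ in dimension three) and the elementary spectral localization: $S_{j-1}f\,\D_jg$ is supported in an annulus of size $\sim 2^j$, whereas $\D_jf\,\D_{j+\nu}g$ is supported only in a \emph{ball} of radius $\sim 2^j$.

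For the paraproduct $T_fg$, I would first note that Bernstein gives $\|\D_kf\|_{L^\infty}\lesssim 2^{k(\f32-s_1)}c_k\|f\|_{\dH^{s_1}}$ with $(c_k)\in\ell^2$. Since $s_1<\f32$ the exponent $\f32-s_1$ is positive, so summing the geometric series over $k\le j-2$ yields $\|S_{j-1}f\|_{L^\infty}\lesssim 2^{j(\f32-s_1)}c_j\|f\|_{\dH^{s_1}}$, still with $(c_j)\in\ell^2$. Multiplying by $\|\D_jg\|_{L^2}\lesssim 2^{-js_2}c_j\|g\|_{\dH^{s_2}}$ and using the annulus localization (so that $\D_k T_fg$ only collects the finitely many $j$ with $|j-k|\le N_0$), I obtain $2^{k(s_1+s_2-\f32)}\|\D_k T_fg\|_{L^2}\lesssim d_k\|f\|_{\dH^{s_1}}\|g\|_{\dH^{s_2}}$ with $(d_k)\in\ell^1$; the improvement from $\ell^2$ to $\ell^1$ is exactly the Cauchy--Schwarz step on the product of two $\ell^2$ sequences, and this is what produces the third Besov index $1$. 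The term $T_gf$ is handled identically, this time invoking $s_2<\f32$.

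The remainder $R(f,g)$ is the main obstacle, precisely because its summands are localized in balls rather than annuli, so a low-frequency output block receives contributions from \emph{all} high frequencies. Here I would estimate the product in $L^1$ by H\"older, $\|\D_jf\,\D_{j+\nu}g\|_{L^1}\lesssim\|\D_jf\|_{L^2}\|\D_{j+\nu}g\|_{L^2}$, and then apply Bernstein from $L^1$ to $L^2$ on each block, giving $\|\D_k(\D_jf\D_{j+\nu}g)\|_{L^2}\lesssim 2^{3k/2}2^{-j(s_1+s_2)}d_j\|f\|_{\dH^{s_1}}\|g\|_{\dH^{s_2}}$, a quantity that is nonzero only for $j\gtrsim k$ (the ball of radius $2^j$ must reach frequency $2^k$). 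Summing over $j\ge k-N$ then yields
\beno
2^{k(s_1+s_2-\f32)}\|\D_kR(f,g)\|_{L^2}\lesssim\sum_{j\ge k-N}2^{-(j-k)(s_1+s_2)}d_j\,\|f\|_{\dH^{s_1}}\|g\|_{\dH^{s_2}},
\eeno
and the crucial point is that the positivity hypothesis $s_1+s_2>0$ makes $\bigl(2^{-n(s_1+s_2)}\mathbf 1_{n\ge-N}\bigr)_n$ a summable sequence, so the right-hand side is a convolution of an $\ell^1$ sequence with a summable geometric tail, hence itself $\ell^1$ in $k$. This closes the remainder estimate.

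Summing the three bounds gives the claim. I expect the genuine content to be exactly this bookkeeping: the positivity $s_1+s_2>0$ is what tames the remainder, the constraints $s_1,s_2<\f32$ are what make the two paraproducts converge at low frequency, and the gain of the third index (landing in $\dB^{\cdot}_{2,1}$ rather than merely $\dH^{\cdot}=\dB^{\cdot}_{2,2}$) is produced systematically by the Cauchy--Schwarz step turning products of $\ell^2$ sequences into $\ell^1$ sequences. The lower bounds $s_1,s_2>-\f32$ play only the ancillary role of guaranteeing that $f$, $g$ and their product are honest elements of $\cS_h'$, so that the homogeneous Littlewood--Paley series actually converges.
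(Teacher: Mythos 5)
Your proof is correct and follows exactly the route of the paper's source for this lemma: the paper states it without proof as Corollary 5.5 of \cite{BCD}, whose argument is precisely your Bony decomposition $fg=T_fg+T_gf+R(f,g)$ with the two paraproducts controlled via $\|S_{j-1}f\|_{L^\infty}\lesssim 2^{j(\f32-s_1)}c_j\|f\|_{\dH^{s_1}}$ (using $s_i<\f32$) and the remainder tamed by $s_1+s_2>0$ through the $\ell^1$ convolution over $j\geq k-N$, the third index $1$ arising from Cauchy--Schwarz on products of $\ell^2$ sequences. The only quibble is your closing remark: the lower bounds $s_1,s_2>-\f32$ are never actually used (they are implied by $s_1+s_2>0$ together with $s_1,s_2<\f32$), rather than being needed for membership in $\cS_h'$, which for $\dH^{s}$ is automatic once $s<\f32$.
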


\begin{lem}[Lemma 2.4 of \cite{BCD}]\label{S4lem0}
{\sl Let $\cC$ be an annulus. Positive constants $c$ and $C$ exist such that for any
$p\in [1,\infty]$ and any couple $(t,\la)$ of positive real numbers, we have
\beno
{\rm Supp}\hat{u} \subset \la\cC \Rightarrow \|e^{t\D}f\|_{L^p}\leq C e^{-ct\la^2}\|f\|_{L^p}.
\eeno}
\end{lem}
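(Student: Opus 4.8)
The plan is to realize $e^{t\D}u$ as a convolution against an $L^1$ kernel whose norm decays like $e^{-ct\la^2}$, and then to conclude by Young's inequality. First I would fix a smooth function $\phi$ supported in an annulus $\wt\cC$ slightly larger than $\cC$ and identically equal to $1$ on $\cC$. Since $\Supp\wh u\subset\la\cC$, we may freely multiply the heat multiplier $e^{-t|\xi|^2}$ by the cutoff $\phi(\cdot/\la)$ without changing $e^{t\D}u$: setting $\wh K_{t,\la}(\xi)\eqdefa\phi(\xi/\la)e^{-t|\xi|^2}$ we have $e^{t\D}u=K_{t,\la}\ast u$, so that $\|e^{t\D}u\|_{L^p}\leq\|K_{t,\la}\|_{L^1}\|u\|_{L^p}$ for every $p\in[1,\infty]$ simultaneously. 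The whole matter is thus reduced to the kernel bound $\|K_{t,\la}\|_{L^1}\leq Ce^{-ct\la^2}$.

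Next I would exploit scaling to strip off the dependence on $\la$. The change of variables $\xi=\la\eta$ gives $K_{t,\la}(x)=\la^{d}H_s(\la x)$ with $s\eqdefa t\la^2$ and $H_s\eqdefa\cF^{-1}\bigl(\phi(\cdot)e^{-s|\cdot|^2}\bigr)$, whence $\|K_{t,\la}\|_{L^1}=\|H_s\|_{L^1}$ depends only on $s=t\la^2$. Because $\phi$ is supported in the annulus $\wt\cC$, there is a radius $r_0>0$ with $|\eta|\geq r_0$ on $\Supp\phi$; setting $c\eqdefa r_0^2/2$ I factor $e^{-s|\eta|^2}=e^{-cs}\,e^{-s(|\eta|^2-c)}$ and define $\psi_s(\eta)\eqdefa\phi(\eta)e^{-s(|\eta|^2-c)}$, so that $H_s=e^{-cs}\cF^{-1}\psi_s$ and the problem collapses to the uniform bound $\sup_{s>0}\|\cF^{-1}\psi_s\|_{L^1}<\infty$.

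To obtain this uniform bound I would use the standard integration-by-parts device: for any integer $N$ one has $(1+|y|^2)^N\cF^{-1}\psi_s(y)=\cF^{-1}\bigl((1-\D_\eta)^N\psi_s\bigr)(y)$ up to a harmless constant, hence $(1+|y|^2)^N|\cF^{-1}\psi_s(y)|\leq C\|(1-\D_\eta)^N\psi_s\|_{L^1}$; taking $N>d/2$ and integrating in $y$ gives $\|\cF^{-1}\psi_s\|_{L^1}\leq C_N\|(1-\D_\eta)^N\psi_s\|_{L^1}$. Since $\psi_s$ is supported in the fixed compact set $\wt\cC$, it only remains to check that all derivatives $\p^\al\psi_s$ stay bounded uniformly in $s$ on $\wt\cC$. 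This is the technical heart of the argument and the point where care is needed: each derivative falling on $e^{-s(|\eta|^2-c)}$ produces a factor of order $s$, but on $\Supp\phi$ one has $|\eta|^2-c\geq r_0^2/2>0$, so the elementary inequality $s^k e^{-s(|\eta|^2-c)/2}\leq C_k(|\eta|^2-c)^{-k}\leq C_k'$ lets the exponential decay absorb every power of $s$, while the leftover factor $e^{-s(|\eta|^2-c)/2}\leq 1$ closes the uniform bound.

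Assembling the pieces yields $\|K_{t,\la}\|_{L^1}=\|H_s\|_{L^1}=e^{-cs}\|\cF^{-1}\psi_s\|_{L^1}\leq Ce^{-ct\la^2}$, and Young's inequality then delivers the claimed estimate for all $p\in[1,\infty]$ at once. I expect the only genuine obstacle to be the uniform-in-$s$ control of the derivatives of $\psi_s$ just described; everything else is routine Fourier analysis.
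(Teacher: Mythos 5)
Your proposal is correct and follows essentially the same route as the source: the paper quotes this statement as Lemma 2.4 of \cite{BCD} without reproving it, and the proof given there is precisely your argument --- insert a cutoff $\phi$ adapted to the annulus, rescale to reduce to $\la=1$ (i.e.\ to the single parameter $s=t\la^2$), and bound $\bigl\|\cF^{-1}\bigl(\phi\, e^{-s|\cdot|^2}\bigr)\bigr\|_{L^1}$ by $Ce^{-cs}$ via the weighted estimate $(1+|y|^2)^N\cF^{-1}\psi_s=\cF^{-1}\bigl((\Id-\D_\eta)^N\psi_s\bigr)$, with the powers of $s$ produced by the derivatives absorbed by the exponential since $|\eta|$ is bounded away from zero on $\Supp\phi$, before concluding with Young's inequality. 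There is nothing to correct.
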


\bigbreak \noindent {\bf Acknowledgments.} The author would like to thank Professor Jean-Yves Chemin for introducing him
the reference \cite{CG}.  P. Zhang is partially supported
    by NSF of China under Grants   11371347 and 11688101,  and innovation grant from National Center for
    Mathematics and Interdisciplinary Sciences of The Chinese Academy of Sciences.

    \medskip

\end{document}